\documentclass[a4paper,12pt]{amsart}
\usepackage[english]{babel}
\usepackage[T1]{fontenc}
\usepackage{array}
\usepackage{makecell}
\usepackage[a4paper,margin=2.5cm]{geometry}
\usepackage[justification=centering]{caption}

\usepackage{enumerate}
\usepackage{tabularx} 
\usepackage{amsmath}  
\usepackage{graphicx} 

\usepackage{amsmath,amscd,amsbsy,amssymb,latexsym,url,bm,amsthm,mathrsfs}
\usepackage{epsfig,graphicx,subfigure}
\usepackage{enumerate,balance,mathtools}
\usepackage{wrapfig}
\usepackage{mathrsfs,euscript}
\usepackage[usenames]{xcolor}
\usepackage[vlined,boxed,commentsnumbered,linesnumbered,ruled]{algorithm2e}
\usepackage{amsfonts}
\usepackage{amsthm}
\usepackage{fancyhdr}
\usepackage{picinpar}
\usepackage{listings}
\usepackage{layout}
\usepackage[all,cmtip]{xy}
\usepackage{indentfirst}
\usepackage{tikz-cd}
\usepackage{tikz}
\usepackage[all]{xy}
\usepackage{mathrsfs}
\usepackage[colorlinks=true, allcolors=blue]{hyperref}
\usepackage{setspace}

\usepackage{amsmath,amsthm,amssymb,amsxtra,calligra,mathrsfs}
\usepackage{graphicx}
\usepackage{tikz}
\usepackage{tikz-cd} 
\usepackage{xcolor}
\usepackage{upgreek}
\usepackage{comment}
\usepackage{graphicx}
\usepackage{mathtools}
\usepackage{extarrows}
\usepackage{faktor}
\usepackage{mathrsfs,euscript}
\usepackage{comment}
\usepackage{bookmark}
\usepackage{hyperref}
\usepackage{mathrsfs}
\usepackage{multirow}

\hypersetup{
    colorlinks=true, 
    citecolor=blue,
    linkcolor=magenta,
    pdfauthor={},
	bookmarksnumbered=true,
}



\newcommand{\wh}[1]{\widehat{#1}}

\newcommand{\wt}[1]{\widetilde{#1}}

\newcommand{\mb}[1]{\mathbb{#1}}

\newcommand{\ove}[1]{\overline{#1}}

\newcommand{\mtc}[1]{\mathcal{#1}}
\newcommand{\mtf}[1]{\mathfrak{#1}}

\DeclareMathOperator{\coeff}{coeff}
\DeclareMathOperator{\Fut}{Fut}

\DeclareMathOperator{\PGL}{PGL}

\DeclareMathOperator{\Proj}{Proj}

\DeclareMathOperator{\mult}{mult}

\DeclareMathOperator{\Pic}{Pic}

\DeclareMathOperator{\ord}{ord}

\DeclareMathOperator{\Ext}{Ext}
\DeclareMathOperator{\Cl}{Cl}

\DeclareMathOperator{\lct}{lct}

\DeclareMathOperator{\length}{length}

\DeclareMathOperator{\SL}{SL}

\DeclareMathOperator{\GIT}{GIT}

\DeclareMathOperator{\red}{red}
\DeclareMathOperator{\Aut}{Aut}

\DeclareMathOperator{\Ind}{Ind}

\DeclareMathOperator{\vol}{vol}

\DeclareMathOperator{\sm}{sm}
\DeclareMathOperator{\CM}{CM}

\DeclareMathOperator{\Hodge}{Hodge}

\newcommand{\sslash}{\mathbin{\mkern-3mu/\mkern-6mu/\mkern-3mu}}

\newcommand{\sheafHom}{\mathscr{H}\text{\kern -3pt {\calligra\large om}}\,}

\DeclareMathOperator{\Chow}{\mathrm{Chow}}

\setcounter{tocdepth}{1}

\newtheorem{theorem}{Theorem}[section]
\newtheorem{lemma}[theorem]{Lemma}
\newtheorem*{convention}{Convention}
\newtheorem{corollary}[theorem]{Corollary}
\newtheorem{prop}[theorem]{Proposition}

\newtheorem{defn}[theorem]{Definition}

\newtheorem{remark}[theorem]{Remark}

\theoremstyle{remark}

\title{Moduli of Genus Six Curves and K-stability}\date{}
\author{Junyan Zhao}
\address{851 S Morgan St, 60607, Chicago, Illinois, USA}
\email{jzhao81@uic.edu}

\begin{document}
\maketitle

\begin{abstract}
The K-moduli theory provides a different compactification of moduli spaces of curves. As a general genus six curve can be canonically embedded into the smooth quintic del Pezzo surface, we study in this paper the K-moduli spaces $\overline{M}^K(c)$ of the quintic log Fano pairs. We classify the strata of genus six curves $C$ appearing in the K-moduli by explicitly describing the wall-crossing structure. The K-moduli spaces interpolate between two birational moduli spaces constructed by GIT and moduli of K3 surfaces via Hodge theory.

\end{abstract}

\tableofcontents

\section{Introduction}

The moduli of curves is a topic of great interest. One of the classical problem is to find different compactifications of the moduli $M_g$ of smooth curves of genus $g$. For instance, people use the Hassett-Keel program, KSBA stability, K-stability, moduli of K3 via Hodge theory, GIT and moduli of boundary polarized log Calabi-Yau pairs (cf. \cite{ADL19,Hac04,HL10,Has99,HH13,ABB23}) to give different birational models of moduli $\ove{M}_3$ of genus $3$ curve, and these different moduli spaces match perfectly to give a commutative diagram of  wall-crossing structure (cf. \cite[Section 9.3.1]{ADL19}). 

The development of K-stability provides a moduli theory for Fano varieties and log Fano pairs, called \emph{K-moduli spaces}. The general K-moduli theory was established by a number of people (cf. \cite{ABHLX20,BLX19,BX19,BHLLX21,CP21,Jia20,LWX21,LXZ22,XZ20}), and it provides us with an approach to give different compactifications of moduli of curves. Our idea is to embed a general curve into some del Pezzo surface $X$ as a divisor, and study the moduli of pairs $(X,cC)$, where $0<c\leq 1$ is some rational coefficient in log Fano region. For example, in the case for $g=3$, we can take $X=\mb{P}^2$ and $0<c<\frac{3}{4}$ (cf. \cite[Section 6]{ADL19}).

This approach highly depends on the geometry of curves. For now, we focus on curves of genus six. A general curve $C$ of genus six has exactly five distinct linear series of degree $6$ and dimension $2$, denoted by $g^2_6$, and $C$ is canonically embedded in the smooth del Pezzo surface $\Sigma_5$ with $(-K_{\Sigma_5})^2=5$ as a divisor in the class $-2K_{\Sigma_5}$ (cf. \cite{ACGH,AH81}). In this paper, we will study the K-(poly/semi)stable log Fano pair $(X,cD)$ which admits a $\mb{Q}$-Gorenstein smoothing to the pair $(\Sigma,cC)$ with $C\in|-2K_{\Sigma}|$ a smooth curve. These K-semistable pairs form an Artin stack $\mtc{M}^K(c)$, whose good moduli space $\ove{M}^K(c)$ is a projective scheme, where $0<c<\frac{1}{2}$ is a rational number. Theses K-moduli spaces are birational to the moduli spaces $\ove{M}_6$ of DM-stable genus six curves.

There are two natural ways to give different birational models of $\ove{M}_6$. Consider the space $${\bf{P}}:=\mb{P}H^0(\Sigma_5,-2K_{\Sigma_5})\simeq\mb{P}^{15}$$ of curves in the class $-2K_{\Sigma_5}$. Since $\Aut(\Sigma_5)\simeq \mathfrak{S}_5$ is a finite group, the GIT moduli space $\ove{M}^{\GIT}:={\bf{P}}/\Aut(X)$ is a natural birational model of moduli space $\ove{M}_6$ of stable curves of genus six. Another observation is that taking the double cover of $\Sigma_5$ along $C$ yields a K3 surface. By studying the period map from $\ove{M}_6$ to the moduli space $\ove{F}^{*}$ of certain lattice-polarized K3 surfaces, one sees that $\ove{M}_6$ is birational to an arithmetic quotient of a bounded symmetric domain of type IV (cf. \cite{AK11}) These two moduli spaces can be related by our K-moduli.

\begin{theorem}\label{72} \textup{(cf. Theorem \ref{55}, \ref{93})} Let $0<c<\frac{1}{2}$ be a rational number.
\begin{enumerate}
    \item For $0<c<1/17$, there is an isomorphism of Artin stacks $\mtc{M}^{K}(c)\simeq {\mtc{M}}^{\GIT}$, which descends to an isomorphism $\ove{M}^{K}(c)\simeq \ove{M}^{\GIT}$ of good moduli spaces. 
    \item For $\frac{11}{28}<c<\frac{1}{2}$ be a rational number and $\mtc{F}^{*}$ be the Kond\={o}'s moduli of K3 surfaces. Then there is a birational morphism $\ove{M}^K(c)\rightarrow \mtc{F}^{*}$, which is an isomorphism in codimension $1$. Moreover, this morphism is the ample model of the Hodge line bundle.
\end{enumerate}

\end{theorem}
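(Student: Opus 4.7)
The two parts require different techniques: part (1) is a small-$c$ GIT-comparison theorem, standard in the K-moduli wall-crossing program; part (2) is a period-map statement for $c$ near the Calabi-Yau boundary $c=\frac{1}{2}$. Throughout, the author has surely built up (in the earlier sections that we did not see) a wall-crossing description classifying the K-polystable pairs $(X,cC)$ for each chamber of $c$, so in particular the walls $c=1/17$ and $c=11/28$ should be exactly the last wall above $0$ and the first wall below $1/2$. I would freely use that description.

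\textbf{Proof of (1).} The plan is to follow the now-standard small-$c$ comparison argument (as in \cite{ADL19}). First I would show that for $0<c<1/17$ every K-semistable pair has underlying surface $\Sigma_5$. Since $\Sigma_5$ is K-stable as a smooth del Pezzo (and has finite automorphism group $\mathfrak{S}_5$), a log-Futaki/normalized volume computation shows that any $\mathbb{Q}$-Gorenstein degeneration $X_0\neq\Sigma_5$ would be destabilizing for all sufficiently small $c$; the first wall of the K-moduli is at $c=1/17$ by the classification referenced earlier, so below that wall only $\Sigma_5$ appears. Next I would compare K-stability of $(\Sigma_5,cC)$ with GIT stability of $C\in \mathbf{P}=\mathbb{P}H^0(\Sigma_5,-2K_{\Sigma_5})$ under $\Aut(\Sigma_5)=\mathfrak{S}_5$. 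Because $\Aut(\Sigma_5)$ is finite, every test configuration for $(\Sigma_5,cC)$ preserving $\Sigma_5$ is a product test configuration coming from a one-parameter subgroup of $\GL(H^0(\Sigma_5,-2K_{\Sigma_5}))$ (equivalently, from a one-parameter degeneration of $C$ inside $\mathbf{P}$), and the generalized Futaki invariant, after subtracting the vanishing contribution from the K-stable surface, becomes proportional to the Hilbert-Mumford weight with positive constant as $c\to 0$. This gives a bijection between K-(poly/semi)stable pairs and GIT-(poly/semi)stable cycles, hence an isomorphism of stacks $\mtc{M}^K(c)\simeq\mtc{M}^{\GIT}$ that descends to good moduli. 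The main work here is ruling out non-$\Sigma_5$ degenerations, which is where the explicit wall-crossing input is used.

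\textbf{Proof of (2).} For $c$ close to $\frac{1}{2}$, the pair $(X,cC)$ is log Fano with $K_X+\frac{1}{2}C\Qequiv 0$, so the double cover $\pi\colon Y\to X$ branched along $C$ is a K-trivial surface with ADE singularities, i.e.\ a (possibly singular) K3 surface, carrying a polarization and involution whose invariant lattice is exactly the one used to construct Kond\={o}'s period domain. I would proceed in four steps. First, on the smooth locus of the K-moduli (smooth $C\subset\Sigma_5$), the assignment $(\Sigma_5,\frac{1}{2}C)\mapsto Y$ gives a morphism to the period domain by the Torelli theorem; by construction this factors through $\mtc{F}^*$. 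Second, I extend this rational map to a morphism on all of $\ove{M}^K(c)$: since $\ove{M}^K(c)$ is a proper scheme and $\mtc{F}^*$ (the Baily-Borel compactification) is projective, it suffices to check via a valuative/limit argument that every one-parameter family of K-semistable pairs yields a well-defined limit K3 in $\mtc{F}^*$, which follows because the K3 double cover of a K-polystable pair at $c=\frac{1}{2}$ is either a K-trivial (ADE) K3 with the required lattice polarization or a Type II/III degeneration that lies in the Baily-Borel boundary. Third, to prove the morphism is an isomorphism in codimension $1$, I would use the wall-crossing classification at the last wall before $\frac{1}{2}$ to enumerate divisorial boundary components of $\ove{M}^K(c)$ and match them bijectively with the Heegner and cusp divisors described by \cite{AK11}, verifying that only higher-codimension loci (hyperelliptic, trigonal, etc., loci lifting to non-canonical K3s) are contracted.

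\textbf{Ample model of the Hodge line bundle.} Finally, to identify the morphism with the ample model of the Hodge line bundle $\lambda$ on $\ove{M}^K(c)$, I would compute the CM line bundle at $c=\frac{1}{2}$ and show that it is proportional to the pullback of the automorphic (Hodge) line bundle on $\mtc{F}^*$ under the double-cover construction; by Baily-Borel, that automorphic line bundle is ample on $\mtc{F}^*$, so $\lambda$ is semiample on $\ove{M}^K(c)$ with ample model exactly $\mtc{F}^*$. The most delicate step in this half of the theorem will be the codimension-$1$ matching of boundaries, since it requires a complete inventory of K-polystable degenerations on the wall $c=11/28$ and identification of each with a standard K3 degeneration type; everything else is formal given the wall-crossing picture already established earlier in the paper.
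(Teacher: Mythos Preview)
Your outline is essentially correct but overcomplicated at one step. You propose a Hilbert--Mumford comparison between K-stability and GIT stability via one-parameter subgroups of $\GL(H^0(\Sigma_5,-2K_{\Sigma_5}))$. This is unnecessary and in fact slightly confused: since $\Aut(\Sigma_5)\simeq\mathfrak{S}_5$ is \emph{finite}, there are no nontrivial one-parameter subgroups, so every point of $\mathbf{P}$ is automatically GIT-stable. The paper exploits this directly: K-stability of $\Sigma_5$ plus openness of uniform K-stability shows $(\Sigma_5,\varepsilon D)$ is K-stable for every $D\in|-2K_{\Sigma_5}|$, and a local-volume bound (Theorem~\ref{34}) forces any K-semistable surface to be smooth, hence $\Sigma_5$. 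The extension from $0<\varepsilon\ll1$ to $0<c<1/17$ is then a sharper volume estimate ruling out the $A_1$-surfaces $X_1,X_{1,1}$ via explicit $\beta$-invariants. No Futaki/Hilbert--Mumford matching is needed.

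\textbf{Part (2).} Here your approach diverges more substantially from the paper's, and two of your proposed steps would be difficult to carry out. First, you want to extend the period map $\ove{M}^K(c)\dashrightarrow\mathcal{F}^*$ to a morphism by a valuative/limit argument, checking that the double cover of every K-polystable pair is an ADE K3 or a Type II/III degeneration. The paper avoids this entirely: it only defines a rational map (via the double cover on a big open subset), then obtains the morphism \emph{a posteriori} as the semi-ample model of the Hodge line bundle. The key inputs are that $\lambda_{\Hodge}$ is nef on $\ove{M}^K(\tfrac12-\varepsilon)$ by \cite{CP21}, and big because it restricts to an ample bundle on a big open subset isomorphic to an open in $\mathcal{F}$; hence $\mathcal{F}^*=\Proj\bigoplus_{m\ge0}H^0(\ove{M}^K(\tfrac12-\varepsilon),\lambda_{\Hodge}^{\otimes m})$. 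Second, for the ``isomorphism in codimension $1$'' claim you propose to match divisorial strata of $\ove{M}^K(c)$ explicitly with Heegner/cusp divisors of $\mathcal{F}^*$. The paper instead argues by Picard number: counting the divisorial contractions in the wall-crossing (there are exactly three, at $c=\tfrac{1}{17},\tfrac{11}{52},\tfrac{1}{4}$) gives $\rho(\ove{M}^K(\tfrac12-\varepsilon))=4$, and \cite{AK11} identifies the four generators of $\Pic(\mathcal{F})_{\mathbb{Q}}$; the rational map is then an isomorphism in codimension $1$ by comparing these. Your route is not wrong in principle, but the paper's is considerably shorter and sidesteps the need to analyze the K3 limits of the index-$2$ surfaces directly.
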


Therefore, the K-moduli spaces $\ove{M}^K(c)$ gives rise to an explicit resolution of the birational map $\ove{M}^{\GIT}\dashrightarrow \mtc{F}^{*}$. 

Another question we answer is the image of the birational map from $\ove{M}^K(c)$ to the moduli of curves. Let $M_{\sm}^K(c)$ denotes the open subscheme of $\ove{M}^K(c)$ consisting of the K-polystable pairs $(X,cC)$ with $C$ smooth.

\begin{theorem}\label{7} \textup{(cf. Theorem \ref{52})}
Let $0<c<\frac{1}{2}$ be a rational number, and $\varphi(c):\ove{M}^K(c)\dashrightarrow \ove{M}_6$ be the natural forgetful map defined by $[(X,cC)]\mapsto [C]$.
\begin{enumerate}
    \item The map $\varphi(\varepsilon)$ is restricts to a surjective morphism $$M_{\sm}^K(\varepsilon)\twoheadrightarrow U:=\{\textup{curves with exactly five }g^2_6\}\subseteq M_6.$$
    \item The rational map $\varphi\left(\frac{1}{2}-\varepsilon\right)$ restricts to a surjective morphism $$M_{\sm}^K(1/2-\varepsilon)\twoheadrightarrow M_6\setminus \{\textup{hyperelliptic and bielliptic curves}\}.$$ 
\end{enumerate}
\end{theorem}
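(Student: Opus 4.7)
The plan is to exploit the endpoint identifications in Theorem \ref{72} and translate the surjectivity claims about $\varphi(c)$ into classical statements about canonical embeddings of genus six curves and their associated K3 double covers.

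For part (1), Theorem \ref{72}(1) identifies $\mtc{M}^K(\varepsilon) \simeq \mtc{M}^{\GIT}$, so a smooth K-polystable pair has the form $(\Sigma_5, \varepsilon C)$ for some smooth $C \in |-2K_{\Sigma_5}|$; such a pair is automatically GIT-stable because $\Aut(\Sigma_5) = \mathfrak{S}_5$ is finite. Hence $M_{\sm}^K(\varepsilon)$ is the quotient by $\mathfrak{S}_5$ of the smooth locus of $|-2K_{\Sigma_5}|$, and the Arbarello-Harris theorem (\cite{AH81}) applies directly: a smooth genus six curve admits a canonical embedding into some smooth quintic del Pezzo as a divisor in $|-2K_{\Sigma_5}|$ if and only if it has exactly five $g^2_6$'s, and these five embeddings form a single $\mathfrak{S}_5$-orbit. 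Both $\varphi(\varepsilon)(M_{\sm}^K(\varepsilon)) \subseteq U$ and surjectivity onto $U$ follow immediately.

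For part (2), I first rule out hyperelliptic curves via adjunction: if $C \in |-2K_X|$ is smooth on a log Fano $X$ appearing in $\mtc{M}^K(c)$, then $-K_X|_C \cong K_C$, and the short exact sequence $0 \to \strusheaf{X}(K_X) \to \strusheaf{X}(-K_X) \to K_C \to 0$ together with Kawamata-Viehweg vanishing produces a surjection $H^0(X, -K_X) \twoheadrightarrow H^0(C, K_C)$. Composing with the embedding $X \hookrightarrow \Pj(H^0(-K_X)^*)$ induced by $|-K_X|$ then realizes $C$ as its canonical model, which is incompatible with $C$ being hyperelliptic. To exclude bielliptic $C$, I would show that the K3 double cover of $(X, C)$ acquires an extra involution lifting the bielliptic involution, which either produces an explicit destabilizing test configuration of the pair $(X, cC)$ for $c$ close to $1/2$, or places the period point of the K3 strictly inside a Heegner divisor lying outside the image of $\mtc{M}^K(1/2 - \varepsilon)$ in $\mtc{F}^{*}$. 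This exclusion is the main obstacle, since it requires either a delicate K-stability computation or a careful lattice-theoretic identification of the bielliptic stratum with one of the divisors collapsed by the birational morphism of Theorem \ref{72}(2).

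For surjectivity onto $M_6 \setminus \{\textup{hyperelliptic, bielliptic}\}$, any Brill-Noether general curve lies in $U$ and embeds canonically in $\Sigma_5$, producing a K-polystable pair at $c = 1/2 - \varepsilon$ via the wall-crossing chain used to establish Theorem \ref{72}(2). A trigonal curve or smooth plane quintic does not lie on any smooth $\Sigma_5$, but by the wall-crossing classification carried out in the body of the paper such curves appear as smooth divisors in $|-2K_X|$ on explicit $\mb{Q}$-Gorenstein degenerations $X$ of $\Sigma_5$ that enter $\mtc{M}^K(c)$ at earlier walls; one then verifies K-polystability of the resulting pair at $c = 1/2 - \varepsilon$ directly. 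Combining these constructions with the two exclusions above produces the claimed image.
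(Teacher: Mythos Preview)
Your treatment of part (1) matches the paper's: identify $\mtc{M}^K(\varepsilon)$ with the GIT stack via Theorem \ref{72}(1), then invoke \cite{AH81} to see that the smooth members of $|-2K_{\Sigma_5}|$ are exactly the curves with five $g^2_6$.

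For part (2), your surjectivity argument also agrees with the paper's: use the wall-crossing classification (Remark \ref{203}, Theorem \ref{3}, Theorem \ref{80}) to exhibit, for every non-special, trigonal, or plane quintic curve, a K-stable pair at $c=\frac{1}{2}-\varepsilon$.

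The discrepancy is in how you exclude hyperelliptic and bielliptic curves from the image. The paper does not argue by exclusion at all: it uses the surface classification (Theorem \ref{4}) \emph{positively}. Every K-polystable pair $(X,cC)$ has $X$ an ADE quintic del Pezzo, or $X_t$, or $X'$, or $X_q$; the smooth members of $|-2K_X|$ on these are, respectively, non-special curves (by \cite{AH81}), trigonal curves in $\mb{F}_0$, trigonal curves in $\mb{F}_2$, and plane quintics. Bielliptic and hyperelliptic curves simply never appear on the list, so there is nothing further to prove.

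Your hyperelliptic exclusion via adjunction and the surjection $H^0(X,-K_X)\twoheadrightarrow H^0(C,K_C)$ is a legitimate alternative for that case, though you should check that the anticanonical map still embeds $C$ when $X$ has Gorenstein index $2$ (the K-semistable curves avoid the index-$2$ points, so this is fine, but it should be said). Your bielliptic exclusion, however, is a genuine gap: the proposed K3/involution/Heegner-divisor mechanism is not worked out, and you yourself flag it as the main obstacle. The paper's route sidesteps this entirely, and since you already rely on the wall-crossing classification for surjectivity, you can equally well invoke it for containment and drop the speculative argument.
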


In other words, when increasing the coefficient $c$, we gradually add the curves with less than five $g^2_6$, the trigonal curves, and plane quintic curves into our K-moduli spaces. See Section \ref{119} for the introduction to the geometry of genus six curves and the wall-crossing structure of K-moduli spaces.

The explicit wall crossing structure of the K-moduli spaces were studied for $\mb{P}^2,\mb{P}^1\times \mb{P}^1$ and $\mb{P}^3$ in recent years (cf. \cite{ADL19,ADL21,ADL22}). We take $\mb{P}^2$ as an example to briefly illustrate their idea of proof. Consider the K-moduli space $\ove{\mtf{M}}^K(c)$ of log Fano pairs which admits a $\mb{Q}$-Gorenstein degeneration to $(\mb{P}^2,cC_4)$, where $C_4$ is a smooth plane quartic and $c\in(0,\frac{3}{4})$ is a rational number. First of all, ones proves that $\ove{\mtf{M}}^K(\varepsilon)$ is isomorphic to the GIT moduli $\ove{\mtf{M}}^{\GIT}$ of plane quartics. If $C_4$ is a quartic curve such that $(\mb{P}^2,\frac{3}{4}C_4)$ has log canonical singularities, then $(\mb{P}^2,\frac{3}{4}C_4)$ is a K-semistable log Calabi–Yau pair. As $\mb{P}^2$ is K-polystable, then by interpolation of K-stability, the log Fano pair $(\mb{P}^2,cC_4)$ is K-semistable for any $0<c<\frac{3}{4}$.
Therefore, the birational map $\ove{\mtf{M}}^K(c)\dashrightarrow \ove{\mtf{M}}^{\GIT}$ is isomorphic over the open subset $\mtf{M}^{\circ}$ parameterizing quartic curves such that $(\mb{P}^2,\frac{3}{4}C_4)$ has log canonical singularities. From the GIT of quartic curves, one sees that $\ove{\mtf{M}}^{\GIT}\setminus \mtf{M}^{\circ}=[2Q]$, which is the double conic. The K-polystable replacement of $[2Q]$ is locus the pairs $(\mb{P}(1,1,4),c(z^2-f_8(x,y)=0))$.

To some extent, this approach relies on the description of the GIT-(semi/poly)stable objects. Unfortunately, due to the fact that $\Aut(\Sigma_5)=\mtf{S}_5$ is a finite group, every element in the linear series $|-2K_{\Sigma_5}|$ is GIT-stable automatically. On the other hand, the Picard rank $\rho(\Sigma_5)$ of $\Sigma_5$ is five, so potentially there are a huge number of degenerations.

In spite of these two difficulties, by bounding some numerical invariants, we can obtain a classification of the surfaces that can appear in the K-moduli $\ove{M}^K(c)$ for some $0<c<\frac{1}{2}$ (cf. Theorem \ref{4}). Bases on this, our approach is to give for each surface $X$ a stratification of the linear series $|-2K_X|$, for each stratum $Z$, we either prove that $(X,cC)$ is K-unstable for any $C\in Z$, or give a value of $c_i\in(0,\frac{1}{2})$ such that $(X,cC)$ is K-unstable for $0<c<c_i$ but K-semistable for $c=c_i$. This is a \emph{GIT-like} approach to finding all the walls and give the description of the stability conditions for each value $c$. We provide a detailed description of wall crossings for K-moduli spaces $\ove{M}^K(c)$ (cf. Theorem \ref{6}).

\begin{theorem}\label{4}
Let $(X,cD)\in\ove{M}^K(c)$ be a K-polystable pair. Then either $X$ is a quintic del Pezzo surface with at worst du Val singularities, or $X$ is the anticanonical model of the surface of one of the following types:
\begin{enumerate}[(a)]
    \item blow-up of $\mb{P}^2$ at five collinear points;
    \item blow-up of $\mb{P}(1,1,4)$ at four points on the infinity section; 
    \item blow-up of $\mb{F}_2$ at four points, three of which are on the same fiber and the other is on the negative section.
\end{enumerate}
\end{theorem}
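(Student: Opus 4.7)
The plan is to combine the structure theory of $\mb{Q}$-Gorenstein smoothable del Pezzo surfaces with K-semistability constraints to pin down the possible $X$. Since $(X,cD)$ is the $\mb{Q}$-Gorenstein limit of the smooth pair $(\Sigma_5, cC)$ with $C \in |-2K_{\Sigma_5}|$, the surface $X$ is a $\mb{Q}$-Gorenstein smoothable log del Pezzo with $(-K_X)^2 = 5$ and klt singularities; in particular its singularities must be T-singularities of type $\frac{1}{dn^2}(1, dna-1)$. Let $\pi : \widetilde{X}\to X$ denote the minimal resolution, so that $\widetilde{X}$ is a smooth projective rational surface, and the T-chain of exceptional curves over each singular point is completely determined by its type.

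The heart of the argument is to restrict the admissible singularity types. The volume identity $(-K_X)^2 = 5$ together with the numerical relation $K_{\widetilde{X}}^2 = 5 - \sum_p \varepsilon_p$, where $\varepsilon_p \geq 0$ denotes the ``T-defect'' at $p$ (vanishing precisely when $p$ is du Val), already limits the total weight of non-du Val points. The K-semistability of $(X,cC)$ for some $c \in (0,\tfrac{1}{2})$ supplies a further constraint: the normalized volume $\widehat{\mathrm{vol}}_{X,p}$ at every singular point must be bounded below by a quantity proportional to $(1-2c)^2 \cdot (-K_X)^2$, via the Fujita--Liu lower bound for K-semistable log Fanos. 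A short case check through the list of T-singularities of small index then shows that these two bounds force every non-du Val point to be of type $\frac{1}{4}(1,1)$, and cap the number of such points at two.

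If all singularities of $X$ are du Val, then $X$ is a classical quintic del Pezzo surface with ADE singularities, which is the first alternative. Otherwise $\widetilde{X}$ contains one or two disjoint $(-4)$-curves resolving the Wahl points. Contracting the remaining $(-1)$-curves on $\widetilde{X}$ until a relatively minimal rational surface is reached, and tracking the images of the $(-4)$-curve(s), yields a finite enumeration: the $(-4)$-curve may be realised as the strict transform of a line in $\mb{P}^2$ through all five blown-up points (case (a)), as coming from the $(-4)$-curve of $\mb{P}(1,1,4)$ after blowing up four points on its infinity section (case (b)), or as part of the $\mb{F}_2$ configuration obtained from the described four-point blow-up (case (c)). A short verification that each listed surface is indeed $\mb{Q}$-Gorenstein smoothable with $(-K)^2=5$ closes the argument.

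The main technical obstacle is the second step, in which higher-index Wahl singularities such as $\frac{1}{9}(1,2)$, or triple collections of $\frac{1}{4}(1,1)$ points, must be excluded. Some of these configurations do occur on $\mb{Q}$-Gorenstein smoothable log del Pezzos of degree $5$, so their exclusion is a genuinely K-stability-driven phenomenon and will be handled by exhibiting explicit destabilizing test configurations --- most naturally weighted blow-ups centred at the offending singular point --- whose generalized Futaki invariant is negative for every $c \in (0,\tfrac{1}{2})$.
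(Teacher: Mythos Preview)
Your central reduction step is incorrect. You assert that the Fujita--Liu bound together with the anticanonical volume constraint ``force every non-du Val point to be of type $\frac{1}{4}(1,1)$.'' But case~(c) of the theorem --- the anticanonical model of the blow-up of $\mb{F}_2$ at three points on a fiber and one on the negative section --- carries a $\frac{1}{8}(1,3)$ singularity (a T-singularity with $d=2$, $n=2$, $a=1$, so Gorenstein index $2$), not a $\frac{1}{4}(1,1)$. Your reduction would discard this surface, yet it genuinely occurs in the K-moduli (see Section~\ref{117}). So the singularity list you propose to enumerate from is wrong, and the subsequent classification via tracking $(-4)$-curves on the minimal resolution cannot recover case~(c).

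The paper proceeds differently and more cautiously. Proposition~\ref{31} bounds the Gorenstein index at each point to be at most $3$, and shows that index $3$ forces a $\frac{1}{9}(1,2)$ singularity; the argument uses the Fujita--Liu inequality on a finite cover, plus a monomial-divisibility analysis coming from $D\sim -2K_X$. One then invokes existing classification tables: Hidaka--Watanabe for index $1$ (the ADE list in Section~\ref{102}), Nakayama's Table~6 for index $2$ (Theorem~\ref{40}, six types), and Fujita--Yasutake for index $3$. The $\frac{1}{9}(1,2)$ surfaces are eliminated by an explicit $\beta$-invariant computation (Proposition~\ref{33}), giving Corollary~\ref{56}. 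Among the six Nakayama index-$2$ types, types (iv)--(vi) are removed not by a destabilising divisor centred at the singularity, but by first showing geometrically (Lemma~\ref{118}) that any $D\in|-2K_X|$ is forced to pass through the $\frac{1}{12}(1,5)$ point, which then feeds back into the volume inequality of Proposition~\ref{31} to give a contradiction. That leaves exactly types (i)--(iii) of Theorem~\ref{40}, which are the listed cases (a)--(c).

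To repair your argument you would need to weaken the reduction to ``Gorenstein index at most $2$'' (or equivalently: non-du Val points are $\frac{1}{4}(1,1)$, $\frac{1}{8}(1,3)$, or $\frac{1}{12}(1,5)$), and then separately eliminate the $\frac{1}{12}(1,5)$ configurations. The last step is the subtle one: it is not a pure $\beta$-invariant check on the surface, but depends on the interaction between the linear system $|-2K_X|$ and the singular point.
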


\begin{theorem}\label{6} \textup{
(Wall crossings for K-moduli)
} The K-moduli space $\ove{M}^K(c)$ (resp. K-moduli stack $\mtc{M}^K(c)$) is irreducible and normal (resp. smooth) for any $c\in\left(0,\frac{1}{2}\right)$. Moreover, the list of K-moduli walls of $\ove{M}^K(c)$ is $$\left\{
\frac{1}{17},\frac{2}{19},\frac{1}{7},\frac{4}{23},\frac{1}{5},\frac{11}{52},\frac{2}{9},\frac{7}{29},\frac{1}{4},\frac{8}{31},\frac{17}{64},\frac{19}{68}\right\}\bigcup$$ $$\left\{\frac{2}{7},\frac{23}{76},\frac{4}{13},\frac{13}{41},\frac{9}{28},\frac{31}{92},\frac{16}{47},\frac{7}{20},\frac{19}{53},\frac{13}{36},\frac{4}{11},\frac{11}{28}\right\}.$$
    \begin{enumerate}
    \item Among the walls listed above, there are 3 divisorial contractions, and 21 flips. In particular, the Picard number of $\ove{M}^K\left(\frac{1}{2}-\varepsilon\right)$ is $4$.
    \item When $c=c_1=\frac{1}{17}$, the wall crossing morphism $\ove{M}^K(c_1+\varepsilon)\rightarrow\ove{M}^K(c_1-\varepsilon)$ is a divisorial contraction, and the exceptional divisor $\ove{E}_1$ is birational to the sublocus of $\ove{M}_6$ consisting of stable curves of genus six curves with less than five $g^2_6$.
    \item When $c=c_6=\frac{11}{52}$, the wall crossing morphism $\ove{M}^K(c_6+\varepsilon)\rightarrow\ove{M}^K(c_6-\varepsilon)$ is a divisorial contraction. The exceptional divisor $\ove{E}_2$ admits a generically $\mb{P}^1$- fibration to the sublocus of $\ove{M}_6$ consisting of trigonal curves.  
    \item When $c=c_9=\frac{1}{4}$, the wall crossing morphism $\ove{M}^K(c_9+\varepsilon)\rightarrow\ove{M}^K(c_9-\varepsilon)$ is a divisorial contraction. The exceptional divisor $\ove{E}_3$ admits a generically 2-dimensional fibration to the sublocus of $\ove{M}_6$ consisting of plane quintic curves. Moreover, for $\frac{1}{4}<c<\frac{1}{2}$, the $\ove{E}_3(c)$ is isomorphic to the Laza's VGIT $$\ove{M}_{\mb{P}^2}^{\GIT}(t):=\left(|\mtc{O}_{\mb{P}^2}(5)|\times|\mtc{O}_{\mb{P}^2}(1)|\right)\sslash_t \SL(3)$$ of $(1,5)$-plane curves via the relation $t=\frac{25-20c}{1+28c}$.
\end{enumerate}

\end{theorem}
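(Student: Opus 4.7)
The plan is to implement the GIT-like stratification strategy sketched in the introduction, combined with the wall-crossing machinery developed in \cite{ADL19,ADL21,ADL22}. First I would take the finite list of candidate K-polystable surfaces $X$ supplied by Theorem \ref{4}, and for each such $X$ stratify $|-2K_X|$ by $\Aut(X)$-orbit type. Each surface type (smooth or du Val $\Sigma_5$ and types (a), (b), (c)) carries a toric or nearly toric structure, so the strata can be organized combinatorially in terms of vanishing orders of $D\in|-2K_X|$ along the boundary divisors and infinitely-near fixed points. For each stratum $Z$ I would either exhibit a test configuration destabilizing $(X,cD)$ for every $c\in(0,\frac{1}{2})$, or compute the generalized Futaki invariant $\Fut(c) = Ac - B$ of a natural test configuration (coming from a one-parameter subgroup of $\Aut(X)$ or from a weighted blow-up along a distinguished subscheme) as an affine-linear function of $c$. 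The root $c^\ast = B/A$ gives a candidate wall, and comparing to the $\delta$-invariant of $(X, c^\ast D)$ confirms K-semistability at $c^\ast$. Collecting all such critical values should produce the advertised list of twenty-four walls.

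For Part (1), irreducibility and normality propagate from the isomorphism $\mtc{M}^K(\varepsilon)\simeq\mtc{M}^{\GIT}$ of Theorem \ref{72}(1) through each wall-crossing morphism, since local VGIT preserves both properties. Smoothness of the stack $\mtc{M}^K(c)$ amounts to unobstructedness of first-order deformations of each K-polystable pair $(X,cD)$; since every candidate $X$ is Gorenstein with at worst du Val or mild cyclic quotient singularities and each polystable divisor $D$ is reduced with controlled singularities, I would verify the vanishing of the relevant $T^2(X,cD)$ locally. To separate divisorial contractions from flips at each wall I would compute the dimensions of the centers on either side: a wall is divisorial exactly when one of the two centers has codimension one in $\ove{M}^K(c\pm\varepsilon)$. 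The Picard number count at $c = \frac{1}{2}-\varepsilon$ then follows from the three divisorial contractions together with $\rho(\ove{M}^K(\varepsilon)) = \rho(\ove{M}^{\GIT}) = 1$.

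For Parts (2)--(4) I would identify the exceptional divisors $\ove{E}_i$ geometrically through the curves they parameterize on the double cover. At $c_1 = \frac{1}{17}$ the wall must correspond to the first nontrivial degeneration of $\Sigma_5$ into a type-(a) surface, and the center in $\ove{M}^K(c_1-\varepsilon)$ parameterizes curves whose canonical embedding no longer sees five distinct $g^2_6$'s, so $\ove{E}_1$ is birational to the sublocus of Theorem \ref{7}. At $c_6 = \frac{11}{52}$ the exceptional surfaces are of type (b), and the anticanonical curves on them limit to trigonal curves on the double cover; the generic $\mb{P}^1$-fibration then comes from the moduli of the extra $g^1_3$ data. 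At $c_9 = \frac{1}{4}$ the type-(c) model produces plane quintics as limit curves, and the identification with Laza's VGIT follows by matching the CM line bundle on the K-moduli side with $\mtc{O}(1,t)$ on $\left(|\mtc{O}_{\mb{P}^2}(5)|\times|\mtc{O}_{\mb{P}^2}(1)|\right)\sslash_t\SL(3)$, the computation showing that the two sets of semistable loci agree precisely when $t = \frac{25-20c}{1+28c}$.

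The hard part will be the bookkeeping of twenty-four walls: for each potential degeneration one must simultaneously exhibit the destabilizing test configuration at $c = c_i$ and rule out every other, potentially more destabilizing, test configuration at the same $c$, which in practice requires a sharp $\delta$-invariant estimate stratum by stratum on each of the five surface types. The VGIT identification at $c_9$ is a secondary technical hurdle, since upgrading the birational comparison to an isomorphism requires matching CM polarizations globally and confirming that the VGIT walls on Laza's side pull back to the K-moduli walls $c_9,\ldots,c_{24}$ under the stated change of parameter.
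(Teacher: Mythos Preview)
Your overall architecture---classify the surfaces via Theorem~\ref{4}, stratify $|-2K_X|$, compute $\beta$-invariants as affine-linear functions of $c$ to locate walls, then count dimensions to separate divisorial contractions from flips---matches the paper's strategy. But several points are either wrong or miss the key idea that makes the argument feasible.

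\textbf{Misidentification of surfaces at the divisorial walls.} You assign the wall $c_1=\frac{1}{17}$ to a type-(a) surface and $c_9=\frac{1}{4}$ to a type-(c) surface. In the paper the first wall is governed by the du Val surface $X_1$ with a single $A_1$-singularity (not the index-two type-(a) surface), and the wall at $\frac{1}{4}$ is governed by the type-(a) surface $X_q$, the blow-up of $\mb{P}^2$ along five collinear points. Type-(c) surfaces first appear at $\frac{13}{41}$, which is a flipping wall inside the trigonal locus. So your geometric description of $\ove{E}_1$ and $\ove{E}_3$ is attached to the wrong surfaces, and the fibration structures you propose do not follow.

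\textbf{The ``hard part'' is bypassed, not confronted.} You plan to rule out competing test configurations via stratum-by-stratum $\delta$-invariant estimates. The paper avoids this entirely: every K-polystable pair at a wall is a complexity-one (or toric) $\mb{T}$-pair, so the equivariant criterion (Theorem~\ref{71}) reduces K-polystability to checking $\beta$-invariants of the finitely many $\mb{G}_m$-invariant prime divisors and a single Futaki-type condition. Without invoking this reduction, your proposal has no mechanism for actually certifying K-polystability at each wall, only for producing candidate destabilizations.

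\textbf{Reducedness of polystable boundaries.} You assert that each polystable $D$ is reduced; this is false already at the first wall, where the polystable replacement is $(X_n,\tfrac{1}{17}D_n)$ with $D_n=4l_1+2l_2+2F_1+2F_2$. The paper's smoothness argument therefore passes through $\Omega^1_X(\log C_{\red})$ rather than $\Omega^1_X(\log C)$, and the vanishing of $\Ext^2$ uses the residue exact sequence together with $\Ext^2(\Omega^1_X,\mtc{O}_X)=0$ for $\mb{Q}$-Fano $X$.

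\textbf{Minor differences.} For irreducibility and normality the paper does not propagate through walls as you suggest; instead it argues \'etale-locally via Luna slices at each $c$ and cites \cite{ADL22} for irreducibility. For the VGIT identification at $c_9$ the paper does not match CM polarizations globally: it compares the explicit wall list against Laza's table, obtains a bijective morphism of good moduli spaces, proves the target is normal via unobstructedness, and concludes by Zariski's Main Theorem. Your CM-line-bundle approach could work but is not what is done.
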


\textbf{The organization of the paper}

In Section 2, we recall some results which are needed in the rest of the paper, including concepts of K-stability and K-moduli spaces, geometry and moduli of genus six curves, and classification of quintic del Pezzo surfaces. In Section 3, we prove that K-moduli spaces $\ove{M}^K(c)$ is isomorphic to the GIT moduli space when $c$ is small, and study the first wall, which is a divisorial contraction, in detail. We will give explicit computation of the $\beta$-invariants for the first wall, as this technique will be used through out the paper.

In Section 4, we concentrate on walls corresponding to ADE del Pezzo surfaces. In Section 5, we study the other two divisorial contractions and show that the exceptional locus is isomorphic to some special VGIT of curves on $\mb{P}^2$ and $\mb{P}^1\times\mb{P}^1$ respectively. It turns out that all these exceptional loci correspond to certain loci of special curves in $\ove{M}_6$. Combining all these together, one proves Theorem \ref{6}.

Finally, in Section 7, we study the birational map from $\ove{M}^K(c)$ to moduli $\ove{M}_6$ of curves and moduli $\mtc{F}^{*}$ of K3 surfaces. In particular, we finish the proof of Theorem \ref{72} and Theorem \ref{7}.
~\\

\textbf{Acknowledgements} The author is greatly indebted to Yuchen Liu for suggesting the problem and providing useful ideas. The author would also like to thank Izzet Coskun, Lawrence Ein for many stimulating discussions. I would also like to express my gratitude to Zhiyuan Li, Zengrui Han, Fei Si and J. Ross Goluboff for insightful conversations and emails.

\section{Preliminaries}\label{119}

\begin{convention}\textup{
Throughout this paper, we work over the field of complex numbers $\mb{C}$. By a curve, we mean a connected projective algebraic curve. For notions and properties of singularities of surface pairs, we refer the reader to \cite[Chapter 2, 4]{KM98}.}
\end{convention}

\subsection{K-stability of log Fano pairs} 

\begin{defn}
Let $X$ be a normal projective variety, and $D$ be an effective $\mb{Q}$-divisor. Then $(X,D)$ is called a \textup{log Fano pair} if $K_X+D$ is $\mb{Q}$-Cartier and $-(K_X+D)$ is ample. A normal projective variety $X$ is called a \textup{$\mb{Q}$-Fano variety} if $(X,0)$ is a klt log Fano pair.
\end{defn}

\begin{defn}
Let $(X,D)$ be an n-dimensional log Fano pair, and $E$ a prime divisor on a normal projective variety $Y$, where $\pi:Y\rightarrow X$ is a birational morphism. Then the \textup{log discrepancy} of $(X,E)$ with respect to $E$ is $$A_{(X,D)}(E):=1+\coeff_{E}(K_Y-\pi^{*}(K_X+D)).$$ We define the \textup{S-invariant} of $(X,D)$ with respect to $E$ to be $$S_{(X,D)}(E):=\frac{1}{(-K_X-D)^n}\int_{0}^{\infty}\vol_Y(\pi^{*}(-K_X-D)-tE)dt,$$ and the \textup{$\beta$-invariant} of $(X,D)$ with respect to $E$ to be $$\beta_{(X,D)}(E):=A_{(X,D)}(E)-S_{(X,D)}(E)$$
\end{defn}

The first definition of K-(poly/semi)stability of log Fano pairs used test configurations. We refer the reader to \cite{Xu21}. There is an equivalent definition using valuations, which is called \emph{valuative criterion} for K-stability. 

\begin{theorem}\label{32} \textup{(cf. \cite{Fuj19,Li17,BX19})} A log Fano pair $(X,D)$ is 
\begin{enumerate}
    \item K-semistable if and only if $\beta_{(X,D)}(E)\geq 0$ for any prime divisor $E$ over $X$;
    \item K-stable if and only if $\beta_{(X,D)}(E)>0$ for any prime divisor $E$ over $X$.
\end{enumerate}

\end{theorem}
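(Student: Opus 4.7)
The plan is to prove both implications by passing back and forth between test configurations and divisorial valuations. Recall that K-(semi)stability is defined in terms of the sign of the generalized Futaki invariant $\Fut(\mathcal{X},\mathcal{D})$ of normal test configurations $(\mathcal{X},\mathcal{D})/\mathbb{A}^1$ of $(X,D)$. A key reduction of Li--Xu, obtained by running an equivariant MMP with scaling on a given test configuration, shows that it suffices to test the sign of $\Fut$ on \emph{special} test configurations, i.e.\ those whose central fibre $(\mathcal{X}_0,\mathcal{D}_0)$ is itself a klt log Fano pair; moreover the MMP does not increase the Futaki invariant, so K-semistability (resp.\ K-stability) is equivalent to $\Fut\ge 0$ (resp.\ $>0$ for non-product special test configurations) on special test configurations alone.

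First I would construct the dictionary between special test configurations and divisorial valuations. Given a special test configuration $(\mathcal{X},\mathcal{D})$, the $\mathbb{G}_m$-action together with the vanishing order along the central fibre produces, by restriction to $K(X)\subset K(\mathcal{X})$, a divisorial valuation of the form $c\cdot\ord_E$ where $E$ is a prime divisor over $X$ realised as a component of the central fibre on a suitable birational model. Conversely, for any prime divisor $E$ over $X$ with $A_{(X,D)}(E)<\infty$, Fujita's construction attaches a (possibly non-special) test configuration to the filtration of $\bigoplus_m H^0(X,-m(K_X+D))$ by order of vanishing along $E$, and the Li--Xu reduction then promotes it to a special one. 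The central computation, due to Li and Fujita, identifies the generalized Futaki invariant of the resulting degeneration with a positive multiple of
\[
A_{(X,D)}(E) \;-\; \frac{1}{(-K_X-D)^n}\int_{0}^{\infty}\vol\bigl(\pi^{*}(-K_X-D)-tE\bigr)\,dt \;=\;\beta_{(X,D)}(E),
\]
by rewriting the Duistermaat--Heckman measure of the $\mathbb{G}_m$-action on $(\mathcal{X}_0,\mathcal{D}_0)$ as an integral of volumes of twisted anticanonical classes through Okounkov-body asymptotics.

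For part (1), the two directions combine as follows: if $(X,D)$ is K-semistable then $\Fut\ge 0$ on every Fujita-type test configuration, hence $\beta_{(X,D)}(E)\ge 0$ for every prime divisor $E$ over $X$; conversely, given any test configuration, the Li--Xu reduction produces a special one whose Futaki invariant equals $\beta_{(X,D)}(E)$ for a suitable divisor $E$, so non-negativity of $\beta$ forces K-semistability. For part (2), the argument of Blum--Xu adds the strict inequality: if $\beta_{(X,D)}(E)>0$ for every divisor $E$ but $(X,D)$ were only strictly K-semistable, there would exist a non-product special test configuration with $\Fut=0$, contradicting the dictionary by producing some $E$ with $\beta_{(X,D)}(E)=0$; the converse is immediate.

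The main obstacle is the identification of $\Fut(\mathcal{X},\mathcal{D})$ with $\beta_{(X,D)}(E)$ in the displayed integral form. The expression of $S_{(X,D)}(E)$ as an integral of volumes is not manifest from the usual definition of $\Fut$ as an intersection number on a compactified test configuration; one must pass through Okounkov-body type estimates and show that the asymptotic behaviour of the $E$-filtered linear series of $-m(K_X+D)$ is governed by the volumes of $\pi^{*}(-K_X-D)-tE$ as $t$ varies. A secondary subtlety, specific to part (2), is that a priori one might need to test $\beta\ge 0$ over all quasi-monomial (or more general) valuations rather than just divisorial ones; reducing to divisorial valuations relies on the approximation and lower semicontinuity of the $\delta$-invariant established by Blum--Xu (and later refined in the work of Xu--Zhuang).
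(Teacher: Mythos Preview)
The paper does not give a proof of this statement: it is recorded as a cited result with the reference \texttt{(cf.\ \cite{Fuj19,Li17,BX19})} and no argument is supplied. So there is no ``paper's own proof'' to compare against; your proposal is essentially a sketch of the argument contained in those references, and as such it is broadly correct.

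A couple of small points of precision in your sketch. In the converse direction of (1), the Li--Xu MMP reduction does not produce a special test configuration whose Futaki invariant \emph{equals} the original one; it produces one whose Futaki invariant is \emph{at most} the original (as you yourself note earlier). The chain of inequalities is then $\Fut(\mathcal{X},\mathcal{D})\ge \Fut(\mathcal{X}^{\mathrm{sp}},\mathcal{D}^{\mathrm{sp}})=c\cdot\beta_{(X,D)}(E)\ge 0$ for some positive constant $c$ and some divisor $E$ coming from the central fibre. Also, in part (2) your remark about needing to reduce from quasi-monomial to divisorial valuations is not really an obstacle for the statement as written: the theorem only asserts the equivalence with divisorial $\beta$, and the Blum--Xu argument you cite already works at that level by extracting a divisor from a Futaki-zero special test configuration. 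The deeper issue you allude to (uniform K-stability versus K-stability, or equivalently $\delta>1$ versus $\beta>0$ divisorially) is resolved by \cite{LXZ22}, but is not needed for the bare statement here.
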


The following powerful result is called \emph{interpolation} of K-stability. We only state a version that we will use later. For a more general statement, see for example \cite[Proposition 2.13]{ADL19} or \cite[Lemma 2.6]{Der16}.

\begin{theorem}\label{36}
Let $X$ be a K-semistable $\mb{Q}$-Fano variety, and $D\sim_{\mb{Q}}-rK_X$ be an effective divisor. 
\begin{enumerate}[(1)]
    \item If $(X,\frac{1}{r}D)$ is klt, then $(X,cD)$ is K-stable for any $c\in(0,\frac{1}{r})$;
    \item If $(X,\frac{1}{r}D)$ is log canonical, then $(X,cD)$ is K-semistable for any $c\in(0,\frac{1}{r})$.
\end{enumerate}
\end{theorem}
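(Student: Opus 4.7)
The plan is to reduce both statements to the valuative criterion (Theorem \ref{32}) by deriving the identity
\[
\beta_{(X,cD)}(E) \;=\; (1-cr)\,\beta_X(E) \;+\; cr\cdot A_{(X,\frac{1}{r}D)}(E)
\]
for every prime divisor $E$ over $X$. First I would check that $(X,cD)$ is a log Fano pair for $c\in(0,1/r)$: since $D\sim_{\mathbb{Q}}-rK_X$, we have $-(K_X+cD)\sim_{\mathbb{Q}}(1-cr)(-K_X)$, and $1-cr>0$ together with $-K_X$ ample gives ampleness of $-(K_X+cD)$. Klt-ness of $(X,cD)$ is inherited from klt-ness of $X$ (case of smaller coefficient) or from klt/lc of $(X,\frac{1}{r}D)$ by convexity.

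Next, for a birational model $\pi:Y\to X$ and a prime divisor $E\subset Y$, the log discrepancy is linear in the boundary:
\[
A_{(X,cD)}(E)=A_X(E)-c\cdot\ord_E(D).
\]
For the $S$-invariant, I would use the numerical equivalence $\pi^{*}(-K_X-cD)\equiv(1-cr)\,\pi^{*}(-K_X)$ and substitute $t=(1-cr)s$ in the definition
\[
S_{(X,cD)}(E)=\frac{1}{(-K_X-cD)^n}\int_{0}^{\infty}\vol_Y\bigl(\pi^{*}(-K_X-cD)-tE\bigr)\myd t;
\]
the $(1-cr)^n$ factors from the volume and the normalization cancel, and one extra factor of $(1-cr)$ comes from $\myd t$, yielding $S_{(X,cD)}(E)=(1-cr)\,S_X(E)$. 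Subtracting and regrouping the terms as
\[
A_X(E)-c\cdot\ord_E(D)-(1-cr)S_X(E)=(1-cr)\bigl(A_X(E)-S_X(E)\bigr)+cr\Bigl(A_X(E)-\tfrac{1}{r}\ord_E(D)\Bigr)
\]
gives the boxed identity.

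From here both conclusions are immediate. For (2): if $(X,\frac{1}{r}D)$ is log canonical, then $A_{(X,\frac{1}{r}D)}(E)\geq 0$ for every $E$, while $\beta_X(E)\geq 0$ for every $E$ by K-semistability of $X$ (Theorem \ref{32}); since $1-cr>0$ and $cr>0$, both summands are nonnegative, so $\beta_{(X,cD)}(E)\geq 0$ and $(X,cD)$ is K-semistable. For (1): klt of $(X,\frac{1}{r}D)$ upgrades the second summand to a strict inequality $cr\cdot A_{(X,\frac{1}{r}D)}(E)>0$, while the first remains nonnegative, so $\beta_{(X,cD)}(E)>0$ for all $E$, giving K-stability.

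There is no essential obstacle; the only step that requires care is the volume rescaling that produces the factor $(1-cr)$ in $S_{(X,cD)}(E)$, and the clean decomposition of $\beta_{(X,cD)}(E)$ as a convex-type combination of $\beta_X(E)$ and $A_{(X,\frac{1}{r}D)}(E)$. Once that identity is in hand, the interpolation principle is a direct sign check.
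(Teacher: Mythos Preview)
The paper does not prove Theorem \ref{36}; it records it as a known result and refers to \cite[Proposition 2.13]{ADL19} and \cite[Lemma 2.6]{Der16} for a proof. So there is no in-paper argument to compare against.

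Your proof is correct and is in fact the standard argument via the valuative criterion. The identity
\[
\beta_{(X,cD)}(E)=(1-cr)\,\beta_X(E)+cr\cdot A_{(X,\frac{1}{r}D)}(E)
\]
is exactly what the cited references exploit: the rescaling $S_{(X,cD)}(E)=(1-cr)S_X(E)$ comes out cleanly from homogeneity of volume, and the decomposition of $A_{(X,cD)}(E)$ is just linearity of log discrepancies in the boundary. Your check that $(X,cD)$ is klt for $c\in(0,1/r)$, using $A_{(X,cD)}(E)=(1-cr)A_X(E)+cr\,A_{(X,\frac{1}{r}D)}(E)>0$ together with the hypothesis that $X$ is $\mathbb{Q}$-Fano (hence klt), ensures Theorem \ref{32} applies, so the sign analysis goes through as you describe. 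There is nothing missing.
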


The following result on openness of K-semistability is useful in proving certain log Fano pairs is K-semistable.

\begin{theorem}\label{35} \textup{(cf. \cite{BLX19,Xu20})}
Let $(\mtf{X},\mtf{D})\rightarrow B$ be a $\mb{Q}$-Gorenstein family of log Fano pairs over a normal base $B$. Then
$$\{b\in B:(\mtf{X}_{\ove{b}},\mtf{D}_{\ove{b}}) \textup{ is K-semistable}\}$$
is a Zariski open subset of $B$.
\end{theorem}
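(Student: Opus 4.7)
The plan is to deduce openness from the equivalence, due to Fujita--Li and Blum--Jonsson, between K-semistability of a log Fano pair and the condition $\delta(X,D) \geq 1$, where the stability threshold is
$\delta(X,D) := \inf_{E} A_{(X,D)}(E)/S_{(X,D)}(E)$
with $E$ running over all prime divisors over $X$. This is just a reformulation of Theorem \ref{32}. Hence it suffices to prove that $b \mapsto \delta(\mtf{X}_{\ove{b}}, \mtf{D}_{\ove{b}})$ is lower semicontinuous on the normal base $B$, whence $\{b : \delta(\mtf{X}_{\ove{b}}, \mtf{D}_{\ove{b}}) \geq 1\}$ is open.

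The crucial input is boundedness. By Birkar's solution to the BAB conjecture, the fibers $(\mtf{X}_{\ove{b}}, \mtf{D}_{\ove{b}})$ form a bounded family, so after a stratification of $B$ we may assume a uniform log-canonical-type bound holds. The heart of the argument is then the result of Xu and of Blum--Liu--Xu that whenever $\delta(\mtf{X}_{\ove{b}}, \mtf{D}_{\ove{b}}) < 1$, the destabilizing valuation realizing a value of $A/S$ strictly less than $1$ can be taken to arise from a plt-type blowup in a bounded family. Concretely, after an alteration $B' \to B$, there is a finite collection of relative families of exceptional prime divisors $\mtf{E}_i \to B'$ such that $\delta$ is computed (up to arbitrary precision) by testing against the fiberwise valuations $\ord_{\mtf{E}_{i,b}}$.

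With such a bounded family in hand, the next step is to verify fiberwise semicontinuity of $A$ and $S$. For each relative divisor $\mtf{E}_i \to B'$, the function $b \mapsto A_{(\mtf{X}_b, \mtf{D}_b)}(\mtf{E}_{i,b})$ is lower semicontinuous by semicontinuity of log discrepancies (inversion of adjunction for the ambient variety plus the $\mb{Q}$-Gorenstein assumption on the family), while $b \mapsto S_{(\mtf{X}_b, \mtf{D}_b)}(\mtf{E}_{i,b})$ is upper semicontinuous by upper semicontinuity of the volume function in flat families, together with boundedness of the integral defining $S$. Taking the infimum of the constructible, lower-semicontinuous ratios $A/S$ over the finitely many $\mtf{E}_i$'s, and then descending from $B'$ to $B$, gives lower semicontinuity of $\delta$ on $B$, and openness follows.

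The main obstacle is the boundedness step: reducing the infimum over all valuations to one over a bounded family of divisorial valuations is precisely what makes the theorem deep, and requires the full force of BAB and the theory of boundedness of complements à la Birkar. The semicontinuity steps, by contrast, are comparatively standard once the correct bounded family of test divisors has been produced.
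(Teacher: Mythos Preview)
The paper does not prove Theorem~\ref{35}; it merely records the statement and cites \cite{BLX19,Xu20} as references. So there is no ``paper's own proof'' to compare against, and your sketch should be read as an attempt to summarize the argument in those references.

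As such a summary, your outline is broadly correct and follows the strategy of \cite{BLX19}: reformulate K-semistability as $\delta\geq 1$, then prove lower semicontinuity (in fact constructibility plus lower semicontinuity) of $b\mapsto \delta(\mtf{X}_{\ove{b}},\mtf{D}_{\ove{b}})$ by reducing the infimum over all valuations to a bounded family of divisorial ones. A couple of points deserve sharpening. First, the key boundedness input is not the BAB conjecture per se but Birkar's theorem on \emph{boundedness of complements}: whenever $\delta<1$ (or $\delta\leq 1+\varepsilon$), a valuation approximately computing $\delta$ can be taken to be an lc place of a bounded $N$-complement, and it is this that produces the bounded family of test divisors. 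Second, in a $\mb{Q}$-Gorenstein family the log discrepancy $A_{(\mtf{X}_b,\mtf{D}_b)}(\mtf{E}_{i,b})$ is \emph{constant} in $b$ along a flat family of exceptional divisors (since $K_{\mtf{X}/B}+\mtf{D}$ is $\mb{Q}$-Cartier), not merely lower semicontinuous; the semicontinuity you need is that of $S$, and the subtler step is showing constructibility so that the infimum over the bounded family behaves well. The alternative reference \cite{Xu20} proves the same openness by a rather different route, via special degenerations and normalized volumes of cones, which you do not touch on.
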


Recall that the volume of a divisor $D$ on an n-dimensional normal projective variety $Y$ is $$\vol_Y(D):=\lim_{m\to \infty}\frac{\dim H^0(Y,mD)}{m^n/n!}.$$ The divisor $D$ is \emph{big} by definition if and only if $\vol_Y(D)>0$.

\begin{defn}
Let $x\in (X,D)$ be an n-dimensional klt singularity. Let $\pi:Y\rightarrow X$ be a birational morphism such that $E\subseteq Y$ is an exceptional divisor whose center on $X$ is $\{x\}$. Then the \textup{volume} of $(x\in X)$ with respect to $E$ is $$\vol_{x,X,D}(E):=\lim_{m\to \infty}\frac{\dim\mtc{O}_{X,x}/\{f:\ord_E(f)\geq m\}}{m^n/n!},$$ and the \textup{normalized volume} of $(x\in X)$ with respect to $E$ is $$\wh{\vol}_{x,X,D}(D):=A_{(X,D)}(D)^n\cdot\vol_{x,X,D}(E).$$ We define the \textup{local volume} of $x\in(X,D)$ to be $$\wh{\vol}(x,X,D):=\inf_{E}\wh{\vol}_{x,X,D}(D),$$ where $E$ runs through all the prime divisor over $X$ whose center on $X$ is $\{x\}$.
\end{defn}

\begin{theorem}\label{34} \textup{(cf. \cite{Fuj18,LL19,Liu18})}
Let $(X,D)$ be an n-dimensional K-semistable log Fano pair. Then for any $x\in X$, we have $$(-K_X-D)^n\leq \left(1+\frac{1}{n}\right)^n\widehat{\vol}(x,X,D).$$
\end{theorem}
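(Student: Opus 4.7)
The plan is to reduce the claim to the valuative criterion for K-semistability (Theorem \ref{32}) combined with a global-to-local comparison of asymptotic volumes. Fix $x\in X$ and any prime divisor $E$ over $X$ whose center on $X$ is $\{x\}$; set $v=\ord_E$ and $L=-K_X-D$. Applying Theorem \ref{32} to $E$, K-semistability of $(X,D)$ yields
$$A_{(X,D)}(E)\;\geq\;S_{(X,D)}(E)\;=\;\frac{1}{L^{n}}\int_{0}^{\infty}\vol_Y(\pi^{*}L-tE)\,\derivative t,$$
so the entire task is to bound the integrand from below in terms of $\vol_{x,X,D}(E)$.

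The key estimate is that, for every $t\geq 0$,
$$\vol_Y(\pi^{*}L-tE)\;\geq\;L^{n}-t^{n}\cdot\vol_{x,X,D}(E).$$
To establish this, for each integer $m\geq 0$ trivialize $mL$ near $x$ and consider the evaluation map $H^{0}(X,mL)\to\mathcal O_{X,x}/\mathfrak a_{\lceil tm\rceil}$, where $\mathfrak a_{p}=\{f\in\mathcal O_{X,x}\mcolon v(f)\geq p\}$. Its kernel consists of sections $s$ with $v(s)\geq\lceil tm\rceil$, which is exactly $H^{0}(Y,m\pi^{*}L-\lceil tm\rceil E)$; hence
$$\dim H^{0}(Y,m\pi^{*}L-\lceil tm\rceil E)\;\geq\;\dim H^{0}(X,mL)\;-\;\dim_{\bC}\mathcal O_{X,x}/\mathfrak a_{\lceil tm\rceil}.$$
Dividing by $m^{n}/n!$ and sending $m\to\infty$ produces the stated volume inequality from the definitions of $L^{n}=\vol(L)$ and $\vol_{x,X,D}(E)$. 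Since the right-hand side is non-negative precisely on $[0,T_{0}]$ with $T_{0}:=(L^{n}/\vol_{x,X,D}(E))^{1/n}$, a routine integration yields
$$S_{(X,D)}(E)\;\geq\;\frac{1}{L^{n}}\int_{0}^{T_{0}}\bigl(L^{n}-t^{n}\vol_{x,X,D}(E)\bigr)\,\derivative t\;=\;\frac{n}{n+1}\cdot\left(\frac{L^{n}}{\vol_{x,X,D}(E)}\right)^{\!1/n}.$$

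Combining this with $A_{(X,D)}(E)\geq S_{(X,D)}(E)$ and raising both sides to the $n$-th power gives
$$\widehat{\vol}_{x,X,D}(E)\;=\;A_{(X,D)}(E)^{n}\cdot\vol_{x,X,D}(E)\;\geq\;\left(\frac{n}{n+1}\right)^{\!n}\cdot L^{n}.$$
Taking the infimum over all admissible $E$ and rearranging produces the claimed bound $(-K_X-D)^{n}\leq(1+\tfrac{1}{n})^{n}\widehat{\vol}(x,X,D)$. The main obstacle is the global-to-local volume comparison at the start of the second paragraph, which is the heart of the Fujita--Liu theorem and encodes the compatibility between the asymptotic behavior of sections of $mL$ and the Samuel-type multiplicity of $v$ at $x$; the remaining optimization in $t$ and infimum over $E$ are formal.
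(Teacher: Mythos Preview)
Your argument is correct and is precisely the standard proof due to Fujita and Liu: the valuative criterion $A\geq S$ combined with the global-to-local volume comparison $\vol(\pi^{*}L-tE)\geq L^{n}-t^{n}\vol_{x}(E)$, followed by integration and infimum over $E$. The paper does not supply its own proof of this theorem; it is quoted from the cited references \cite{Fuj18,LL19,Liu18}, and your write-up reproduces exactly the approach found there.

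Two minor remarks. First, the phrase ``trivialize $mL$ near $x$'' tacitly assumes $mL$ is Cartier, so you should restrict to $m$ divisible by the Cartier index of $L$; this is harmless for the asymptotics. Second, in the last step you take the infimum over divisorial valuations $E$ centered at $x$, which matches the paper's stated definition of $\widehat{\vol}(x,X,D)$; be aware that in parts of the literature the infimum runs over all valuations, but by approximation the two agree, so no issue arises here.
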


Another result which we will use through out this paper is the equivariant K-stability. The following result is essentially due to \cite{Zhu21}. 

\begin{defn}
Let $(X,D)$ be a log Fano surface pair with klt singularities. If the maximal torus of $\Aut^0(X,D)$ is $\mb{G}_m$, then we call $(X,D)$ a \textup{complexity one $\mb{T}$-pair}. A $\mb{G}_m$-equivariant divisor $F$ on $X$ is called \textup{vertical} if a maximal $\mb{G}_m$-orbit in $F$ has dimension $1$; is called \textup{horizontal} if otherwise.      
\end{defn}

\begin{theorem}\label{71} \textup{(cf. \cite[Theorem 1.31]{CA23})}
Let $(X,D)$ be a log Fano surface pair of complexity one, with the $\mb{G}_m$-action $\lambda$. Then $(X,D)$ is K-polystable if and only if the following conditions hold:
\begin{enumerate}[(1)]
    \item $\beta_{(X,D)}(F)>0$ for each vertical $\mb{G}_m$-equivariant prime divisor on $X$;
    \item $\beta_{(X,D)}(F)=0$ for each horizontal $\mb{G}_m$-equivariant prime divisor on $X$;
    \item $\Fut_{(X,D)}(v)=0$ for the valuation $v$ induced by $\lambda$. 
\end{enumerate}
\end{theorem}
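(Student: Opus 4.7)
The plan is to reduce the problem to an equivariant K-stability criterion via Zhuang's theorem \cite{Zhu21}, classify the $\mb{G}_m$-invariant divisorial valuations on the complexity-one surface $X$, and then verify that the three conditions in the statement precisely capture K-polystability in this equivariant setting.

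First, by Zhuang's equivariant K-stability, a log Fano pair with a torus action is K-polystable if and only if it is K-polystable after restricting the valuative criterion of Fujita--Li (Theorem \ref{32}) to $\mb{G}_m$-invariant divisorial valuations, with the additional requirement that $\beta$ vanishes only on valuations arising from product test configurations. This immediately isolates the relevant class of valuations one must test.

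Second, I would classify the $\mb{G}_m$-invariant divisorial valuations on $X$. Because the complexity is one, the Chow quotient $X \sslash \mb{G}_m$ is isomorphic to $\mb{P}^1$, and the Altmann--Hausen--S\"u{\ss} polyhedral-divisor description of complexity-one $\mb{T}$-varieties supplies a combinatorial parametrization of such valuations. Concretely, each $\mb{G}_m$-invariant divisorial valuation either (i) is the valuation of a $\mb{G}_m$-invariant prime divisor on $X$, which splits into vertical (orbit closures) and horizontal (pointwise-fixed curves), or (ii) comes from an invariant exceptional divisor on an equivariant modification $Y \to X$. For type (ii), the associated test configurations vary in a one-parameter family as one moves the center of the valuation over the quotient $\mb{P}^1$, and in such a family the generic special fiber is isomorphic to $(X,D)$; hence these test configurations are of product type, and the equivariant valuative criterion forces $\beta = 0$ on them for free.

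Third, I combine the two ingredients. Equivariant K-polystability demands $\beta > 0$ strictly on every invariant valuation not coming from a product test configuration, and $\beta = 0$ exactly on those that do. Vertical divisors do not deform in a $\mb{G}_m$-equivariant family to product test configurations, so the strict inequality $\beta_{(X,D)}(F) > 0$ is required, giving (1). Horizontal divisors, being pointwise fixed by $\mb{G}_m$, induce test configurations whose special fiber carries a trivial $\mb{G}_m$-action fiberwise; they are of product type, so $\beta_{(X,D)}(F) = 0$ is forced, giving (2). Finally, the valuation $v$ induced by $\lambda$ is tautologically a product test configuration, and the standard identification of $\beta$ with the Donaldson--Futaki weight on product configurations yields $\beta(v) = \Fut_{(X,D)}(v) = 0$, giving (3).

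The main obstacle is the rigorous classification step: one must verify that every $\mb{G}_m$-invariant divisorial valuation on $X$ is accounted for by the dichotomy vertical/horizontal together with the action-induced valuation $v$, and that the $\beta = 0$ locus in the equivariant valuative criterion is exhausted by product test configurations built from these. This is precisely where the Altmann--Hausen--S\"u{\ss} combinatorics is needed, and it takes some care to match the polyhedral-divisor data with the vertical vs.\ horizontal dichotomy in the statement. Once this classification is established, the equivalence of the three conditions with K-polystability follows by directly applying Zhuang's reduction and the valuative criterion.
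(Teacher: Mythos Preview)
The paper does not supply its own proof of this theorem; it is quoted from \cite[Theorem 1.31]{CA23} as a tool, with the attribution ``essentially due to \cite{Zhu21}''. So there is no paper proof to compare against, only the literature proof.

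Your general framework---reduce to $\mb{G}_m$-invariant valuations via Zhuang, then classify them---is correct, and your treatment of horizontal divisors and of the valuation $v$ induced by $\lambda$ is on the right track: both give product test configurations, so $\beta=0$ is necessary and sufficient on them.

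The gap is in your treatment of the type~(ii) valuations, i.e.\ $\mb{G}_m$-invariant exceptional divisors centred at fixed points of $X$. You assert that these ``vary in a one-parameter family'' over the quotient $\mb{P}^1$ and hence yield product test configurations with $\beta=0$ automatically. This is false. A $\mb{G}_m$-equivariant weighted blow-up at an isolated fixed point produces a $\mb{G}_m$-invariant exceptional divisor $E$, but the associated test configuration degenerates $X$ to (a weighted version of) the normal cone at that point, which is typically not isomorphic to $(X,D)$; such test configurations are not of product type, and there is no reason for $\beta_{(X,D)}(E)$ to vanish. In the paper's own examples (e.g.\ the exceptional divisor $E$ over the $A_1$-point of $X_n$ in Lemma~\ref{104}, or over the $\frac{1}{4}(1,1)$-point of $X_q$ in Lemma~\ref{105}) these $\beta$-values are computed explicitly and are nonzero.

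What is actually required is a convexity argument: for complexity-one $\mb{T}$-surfaces, every $\mb{G}_m$-invariant divisorial valuation centred at a fixed point is a non-negative combination of the two invariant curve germs through that point together with $\pm v_\lambda$, and the $\beta$-invariant (equivalently the Donaldson--Futaki invariant) is affine-linear along such combinations. Hence $\beta\ge 0$ on the extremal valuations---namely the vertical and horizontal prime divisors on $X$ and $\pm v_\lambda$---forces $\beta\ge 0$ on all invariant valuations, with equality only along the product directions. This is the Ilten--S\"u\ss\ combinatorial reduction (via the divisorial polytope description of complexity-one varieties), and it is the substantive content behind the theorem that your sketch omits. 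Without it, you have not explained why conditions (1)--(3) suffice.
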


\subsection{K-moduli of log Fano pairs}

In this subsection, we briefly review results on the K-moduli theory for log Fano pairs.

\begin{defn}
Let $\pi:(\mtc{X},\mtc{D})\rightarrow S$ be a proper flat morphism to a reduced scheme with normal, geometrically connected fibers of pure dimension $n$, where $\mtc{D}$ is an effective $\mb{Q}$-divisor on $\mtc{X}$ which does not contain any fiber of $\pi$. Then $\pi$ is called a \textup{$\mb{Q}$-Gorenstein flat family of log Fano pairs} if $-(K_{\mtc{X}/B}+\mtc{D})$ is $\mb{Q}$-Cartier and ample over $S$.
\end{defn}

\begin{defn}
Let $0<c<1/r$ be a rational number and $(X,cD)$ be a log Fano surface pair such that $D\sim -rK_X$ for some fixed $r\in\mb{Q}_{\geq 1}$. A $\mb{Q}$-Gorenstein flat family of log Fano pairs $\pi:(\mtc{X},c\mtc{D})\rightarrow C$ over a pointed smooth curve $(0\in C)$ is called a \textup{$\mb{Q}$-Gorenstein smoothing} of $(X,D)$ if  
\begin{enumerate}[(1)]
    \item the divisors $\mtc{D}$ and $K_{\mtc{X}/C}$ are both $\mb{Q}$-Cartier, $f$-ample, and $\mtc{D}\sim_{\mb{Q},\pi}-rK_{\mtc{\mtc{X}}/C}$;
    \item both $\pi$ and $\pi|_{\mtc{D}}$ are smooth over $C\setminus\{0\}$, and
    \item $(\mtc{X}_0,c\mtc{D}_0)\simeq (X,cD)$.
\end{enumerate}
\end{defn}

The following classical result classifies all the klt surface singularities that admit a $\mb{Q}$-Gorenstein smoothing.

\begin{theorem}\label{37} \textup{(cf. \cite[Proposition 3.10]{KSB88})} A klt surface singularity $X_0$ which admits a $\mb{Q}$-Gorenstein smoothing is either an ADE singularity or a cyclic quotient singularity of the form $\mb{A}^2/\langle\xi\rangle$, where $\xi$ is a primitive $dn^2$-th root of unity acting as $$\xi(u,v)=(\xi u,\xi^{dna-1} v),$$ where $0<a<n$ is coprime to $n$. In the later case, we say that $X_0$ is a \textup{$ \frac{1}{dn^2}(1,dna-1)$ singularity}.

\end{theorem}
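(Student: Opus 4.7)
The plan is to follow the classical argument of Kollár--Shepherd-Barron via the index-one cover. Since $X_0$ is a klt surface singularity in characteristic zero, it is a quotient singularity $X_0 \cong \mb{A}^2/G$ for a finite $G \subset \GL(2,\mb{C})$ acting freely in codimension one. Let $N$ be the Cartier index of $K_{X_0}$. Then there is a canonical cyclic Galois cover $\pi \colon Y_0 \to X_0$ of degree $N$, étale over the smooth locus, on which the canonical divisor is Cartier. Applying this construction in families, a $\mb{Q}$-Gorenstein smoothing $\mtc{X} \to (0 \in C)$ of $X_0$ lifts to a Gorenstein flat deformation $\mtc{Y} \to (0 \in C)$ with central fiber $Y_0$ and a free-in-codimension-one $\mb{Z}/N$-action whose quotient recovers $\mtc{X}$. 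Because $\mtc{Y}$ is Gorenstein and its general fiber is smooth (the cover being étale outside the isolated singular point), $Y_0$ is a Gorenstein klt --- hence canonical, hence Du Val --- surface singularity.

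If $N=1$ we are immediately in the ADE case. For $N \geq 2$, the next step is to show that $Y_0$ must be of type $A_{m-1}$. For the exceptional types $D_k, E_6, E_7, E_8$, the automorphism group of $Y_0$ and its representation on the base of the miniversal deformation are explicit from Brieskorn's description; one would enumerate the cyclic subgroups of $\Aut(Y_0)$ that act freely in codimension one and verify that the induced quotient of any equivariant Gorenstein smoothing either is again Du Val or fails to be klt, so no genuinely new singularity is produced. Hence $Y_0 \cong \{xy = t^m\} \subset \mb{A}^3$ is an $A_{m-1}$-singularity, with miniversal smoothing $\{xy = t^m + s\}$.

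The last step is an eigencharacter computation. Write $X_0 = \mb{A}^2/H$ with $H \subset \GL(2)$ cyclic of order $r=mN$ and generator acting by $\diag(\zeta, \zeta^q)$ for some $q$ coprime to $r$ satisfying $q \equiv -1 \pmod{m}$ (so that the index-$N$ subgroup $\mu_m \subset H$ cuts out $Y_0$ and the quotient by $H/\mu_m \cong \mb{Z}/N$ recovers $X_0$). The requirement that the induced $\mb{Z}/N$-action on the $A_{m-1}$-singularity extend equivariantly to its smoothing is that the deformation parameter $s$ be an $H$-eigenvector, which is a congruence on $q$ forcing $N \mid m$; writing $m = dn$ and $N = n$, this condition rearranges to $q \equiv dna - 1 \pmod{dn^2}$ for some $a$ coprime to $n$, so $X_0$ is of the asserted type $\frac{1}{dn^2}(1, dna-1)$. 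The main obstacle will be the second step: pruning the $D$- and $E$-types requires detailed tracking of the $\Aut(Y_0)$-equivariant structure on the base of the miniversal deformation for each exceptional type, which is where the bulk of the original KSB argument lies; the $A$-case analysis, by contrast, reduces cleanly to the character bookkeeping sketched above.
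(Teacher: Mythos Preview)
The paper does not supply a proof of this theorem: it is quoted as a background result with a citation to \cite[Proposition 3.10]{KSB88} and is used only as input to later arguments (e.g.\ Proposition~\ref{31}). So there is no ``paper's own proof'' to compare against.

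That said, your outline is the standard Koll\'ar--Shepherd-Barron route: pass to the index-one cover $Y_0$, lift the $\mb{Q}$-Gorenstein smoothing to a $\mu_N$-equivariant Gorenstein smoothing of $Y_0$, deduce that $Y_0$ is Du Val, and then analyze which Du Val types survive. The $A$-type computation you sketch is correct in spirit. The one place where your write-up is imprecise is the exclusion of the $D$ and $E$ types: the criterion is not that ``the quotient is again Du Val or fails to be klt,'' but rather that the $\mu_N$-action on $Y_0$ must extend to an action on the one-parameter smoothing that is \emph{free on the smooth general fibre} (since the general fibre of $\mtc{X}$ is smooth). For $D_n$ and $E_k$ one checks, using the explicit equations and the weights on the miniversal base, that no nontrivial cyclic action with this freeness property exists; this is the case-by-case work you correctly flag as the main obstacle, but the target statement you would be checking in each case should be sharpened accordingly.
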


Now we introduce the CM line bundle of a flat family of log Fano pairs, which is a functorial line bundle over the base (cf. \cite{PT06,PT09,Tia97}). 

Let $\pi:\mtc{X}\rightarrow S$ be a proper flat morphism of connected schemes with $S_2$ fibers of pure dimension $n$, and $\mtc{L}$ be an $\pi$-ample line bundle on X. Let $\mtc{D}:=\sum a_i\mtc{D}_i$ be a relative Mumford $\mb{Q}$-divisor on $\mtc{X}$ over $S$, where each component $\mtc{D}_i$ is a relative Mumford divisor and $a_i\in(0,1)$ is a rational number. Since we consider only $\mb{Q}$-Gorenstein smoothable family of log Fano pairs over reduced base schemes, then we may assume also that each divisor $\mtc{D}_i$ is flat over $S$ (cf. \cite[Proposition 2.12]{ADL21}). By \cite{KM76}, there are line bundles $\lambda_i(\mtc{X},\mtc{L})$ on $S$ such that $$\det(\pi_{!}(\mtc{L}^k))=\lambda_{n+1}^{\otimes\binom{k}{n+1}}\otimes \lambda_n^{\otimes\binom{k}{n}}\otimes \cdots\otimes\lambda_0$$ for any $k\gg0$. Write the Hilbert polynomial for each fiber $\mtc{X}_s$ as $$\chi(\mtc{X}_s,\mtc{L}^k_{s})=b_0k^n+b_1k^{n-1}+O(k^{n-2}).$$
 
\begin{defn}
    The \textup{CM line bundle} and the \textup{Chow line bundle} of the polarized family $(\pi:\mtc{X}\rightarrow S,\mtc{L})$ are $$\lambda_{CM,\pi,\mtc{L}}:=\lambda_{n+1}^{n(n+1)+\frac{2b_1}{b_0}}\otimes \lambda_n^{-2(n+1)},\quad \lambda_{\Chow,\pi,\mtc{L}}:=\lambda_{n+1}.$$ The \textup{log CM $\mb{Q}$-line bundle} of the family $(\pi:\mtc{X}\rightarrow S,\mtc{L},\mtc{D})$ is $$\lambda_{\CM,\pi,\mtc{D},\mtc{L}}:=\lambda_{\CM,\pi,\mtc{L}}-\frac{n(\mtc{L}_s^{n-1}.\mtc{D}_s)}{(\mtc{L}^n_s)}\lambda_{\Chow,\pi,\mtc{L}}+(n+1)\lambda_{\Chow,\pi|_{\mtc{D}},\mtc{L}|_{\mtc{D}}},$$ where we denote $(\mtc{L}_s^{n-1}.\mtc{D}_s):=\sum a_i(\mtc{L}_s^{n-1},\mtc{D}_{i,s})$, and $\lambda_{\Chow,\pi|_{\mtc{D}},\mtc{L}|_{\mtc{D}}}:=\bigotimes \lambda^{\otimes a_i}_{\Chow,\pi|_{\mtc{D}_i},\mtc{L}|_{\mtc{D}_i}}$.
\end{defn}

When restricting to $\mb{Q}$-Gorenstein smoothable families of log Fano pairs $\pi:(\mtc{X},c\mtc{D})\rightarrow S$ over a reduced base scheme, where $\mtc{D}\sim_{\mb{Q},\pi}-rK_{\mtc{X}}$ for some $r\in \mb{Q}_{>0}$, we can also make the following definition.

\begin{defn}
    The \textup{Hodge $\mb{Q}$-line bundle $\lambda_{\Hodge,\pi,r^{-1}\mtc{D}}$} is the $\mb{Q}$-linear equivalence class of $\mb{Q}$-Cartier $\mb{Q}$-divisors on $S$ such that $$K_{\mtc{X}/S}+r^{-1}\mtc{D}\sim_{\mb{Q}}\pi^{*}\lambda_{\Hodge,\pi,r^{-1}\mtc{D}}.$$
\end{defn}

The following result is called \emph{K-moduli Theorem}, which is contributed by an amount of people (cf. \cite{ABHLX20,BLX19,BX19,BHLLX21,CP21,Jia20,LWX21,LXZ22,XZ20}).

\begin{theorem} \textup{(K-moduli Theorem)}
Let $c\in\left(0,\frac{1}{r}\right)$ be a rational number, and $\chi$ be the Hilbert polynomial of the anti-canonically polarized smooth quintic del Pezzo surface. Consider the moduli pseudo-functor $\mtc{M}^K(c)$ sending a reduced base $S$ to

\[
\left\{(\mtc{X},\mtc{D})/S\left| \begin{array}{l}(\mtc{X},c\mtc{D})/S\textrm{ is a $\mb{Q}$-Gorenstein smoothable log Fano family,}\\ \mtc{D}\sim_{S,\mb{Q}}-rK_{\mtc{X}/S},~\textrm{each fiber $(\mtc{X}_s,c\mtc{D}_s)$ is K-semistable,}\\ \textrm{and $\chi(\mtc{X}_s,\mtc{O}_{\mtc{X}_s}(-mK_{\mtc{X}_s}))=\chi(m)$ for $m$ sufficiently divisible.}\end{array}\right.\right\}.
\]
Then there is a reduced Artin stack, still denoted by $\mtc{M}^K_{c}$, of finite type over $\mb{C}$ which represents the pseudo-functor. The $\mb{C}$-points of $\mtc{M}^K(c)$ parameterize K-semistable $\mb{Q}$-Gorenstein smoothable log Fano pairs $(X,cD)$ with Hilbert polynomial $\chi(X,\mtc{O}_X(-mK_X))=\chi(m)$ for sufficiently divisible $m\gg 0$ and $D\sim_{\mb{Q}}-rK_X$. Moreover, the Artin stack $\mtc{M}^K(c)$ admits a good moduli space $\ove{M}^K(c)$, which is a normal projective reduced scheme of finite type over $\mb{C}$, whose $\mb{C}$-points parameterize K-polystable log Fano pairs. The CM $\mb{Q}$-line bundle $\lambda_{\CM,c}$ on $\mtc{M}^K(c)$ descends to an ample $\mb{Q}$-line bundle $\Lambda_{\CM,c}$ on $\ove{M}^K(c)$.
\end{theorem}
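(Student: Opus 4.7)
The plan is to assemble this theorem from the general K-moduli machinery of the cited papers, specialized to our pseudo-functor of $\mb{Q}$-Gorenstein smoothable log Fano pairs with fixed Hilbert polynomial $\chi$ and $\mtc{D}\sim_{\mb{Q}}-rK_{\mtc{X}}$. The first step is \textbf{boundedness}: the family of K-semistable pairs $(X,cD)$ arising in $\mtc{M}^K(c)$ is bounded. Since every such pair shares the Hilbert polynomial of $(\Sigma_5,-2K_{\Sigma_5})$, the boundedness of K-semistable log Fano pairs with bounded volume (Jia20, LXZ22) gives, for $m$ sufficiently divisible, a uniform anti-$m$-canonical embedding into a common $\mb{P}^N$. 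This yields a parameter space inside the relative Hilbert scheme.

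Next, I would \textbf{construct the Artin stack}. Let $\Hilb$ be the Hilbert scheme of such embedded pairs. Inside $\Hilb$, cut out the locally closed subscheme $Z$ parameterizing $\mb{Q}$-Gorenstein flat families of log Fano pairs $(\mtc{X},c\mtc{D})$ with $\mtc{D}\sim_{\mb{Q},\pi}-rK_{\mtc{X}/S}$ which moreover admit a $\mb{Q}$-Gorenstein smoothing; smoothability is a locally closed condition via the deformation theory of the klt surface singularities classified in Theorem \ref{37}. Theorem \ref{35} (openness of K-semistability) then cuts out an open subscheme $Z^{\mathrm{Kss}}\subseteq Z$, and the quotient stack $\mtc{M}^K(c):=[Z^{\mathrm{Kss}}/\PGL_{N+1}]$ is a reduced Artin stack of finite type whose $\mb{C}$-points parameterize the desired K-semistable pairs.

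For the \textbf{good moduli space} $\ove{M}^K(c)$, I would apply the Alper--Halpern-Leistner--Heinloth criterion fed with the two K-stability inputs: S-completeness of $\mtc{M}^K(c)$ (BX19, LWX21) and $\Theta$-reductivity (ABHLX20, BHLLX21). These together yield $\ove{M}^K(c)$ as a separated algebraic space whose closed points correspond to K-polystable pairs. Properness of $\ove{M}^K(c)$ comes from the uniqueness of K-polystable degenerations (LWX21) combined with boundedness. Normality descends from normality of the K-semistable locus (which is itself a consequence of the fact that K-semistable log Fano pairs are klt, hence normal, and of standard base-change properties of good moduli spaces). The final assertion that $\lambda_{\CM,c}$ descends to an ample $\mb{Q}$-line bundle $\Lambda_{\CM,c}$ is precisely the content of XZ20 and CP21, which prove positivity of the CM line bundle on proper families of K-semistable log Fano pairs; ampleness on $\ove{M}^K(c)$ then yields projectivity.

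The \textbf{main obstacle} I anticipate is the verification that $\mb{Q}$-Gorenstein smoothability is preserved under forming K-polystable limits: one has to ensure that every limit singularity still lies in the classification of Theorem \ref{37}, rather than escaping into some unsmoothable component of the deformation space. This is controlled by the local-to-global deformation theory of klt surface singularities, together with the fact that our family sits inside a single smoothable component of the Hilbert scheme by construction; but it is the step that requires the most care when specializing the general K-moduli theorem to this surface-with-divisor setting.
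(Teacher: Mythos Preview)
The paper does not prove this theorem: it is stated in the preliminaries as a known result, attributed wholesale to \cite{ABHLX20,BLX19,BX19,BHLLX21,CP21,Jia20,LWX21,LXZ22,XZ20}, with no argument given. So there is no ``paper's own proof'' to compare against, and your proposal is really a sketch of how one would assemble the statement from that list of references. As such a sketch it is broadly correct, with the standard architecture (boundedness $\Rightarrow$ Hilbert-scheme parameter space $\Rightarrow$ openness of K-semistability $\Rightarrow$ quotient stack $\Rightarrow$ good moduli space via S-completeness and $\Theta$-reductivity $\Rightarrow$ properness $\Rightarrow$ ampleness of the CM line bundle). A few attributions could be sharpened: both S-completeness and $\Theta$-reductivity in this setting are established in \cite{ABHLX20}, while \cite{BHLLX21} is the reference for properness and \cite{LWX21} for the uniqueness of K-polystable filling; \cite{XZ20} and \cite{LXZ22} supply the ampleness of $\Lambda_{\CM,c}$.

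There is one genuine error. Your argument for normality of $\ove{M}^K(c)$ conflates two unrelated things: the normality of the individual K-semistable log Fano pairs (which follows from their being klt) and the normality of the moduli space as a scheme. The latter does not follow from the former; a moduli space of normal varieties can perfectly well be non-normal. Indeed, the paper itself does \emph{not} rely on the general theorem for this: in the proof of Theorem~\ref{6} (Section~6.3) the authors establish normality of $\ove{M}^K(c)$ separately, by showing that the deformations of each $(X,cC)$ are unobstructed (vanishing of $\Ext^2(\Omega^1_X(\log C_{\red}),\mtc{O}_X)$), hence the stack is smooth and its good moduli space normal. So the normality clause in the displayed statement is, in this paper, a feature of the specific surface situation rather than a consequence of the black-box K-moduli machinery, and your sketch should not claim it as automatic.
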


When we vary the coefficient $c$ in the log Fano region, the K-moduli has the following wall-crossing structure, locally presented by VGIT.

\begin{theorem} \textup{(cf. \cite[Theorem 1.2]{ADL19})} There are rational numbers $$0=c_0<c_1<c_2<\cdots<c_n=\frac{1}{r}$$ such that for every $0\leq j<n$, the K-moduli stacks $\mtc{M}^K(c)$ are independent of the choice of $c\in(c_j,c_{j+1})$. Moreover, for every $0< j<n$ and $0<\varepsilon\ll1$, one has open immersions $$\mtc{M}^K(c_j-\varepsilon)\hookrightarrow \mtc{M}^K(c_j)\hookleftarrow \mtc{M}^K(c_j+\varepsilon),$$ which descend to projective birational morphisms $$\ove{M}^K(c_j-\varepsilon)\rightarrow \ove{M}^K(c_j)\leftarrow \ove{M}^K(c_j+\varepsilon).$$

\end{theorem}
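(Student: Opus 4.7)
The plan is to establish the wall-crossing structure by combining boundedness of the K-semistable family in $c$ with the piecewise affine dependence of the $\beta$-invariant on the coefficient, and then invoke the good moduli space theorems of the K-moduli package to descend to projective birational morphisms. I would model the argument on the original proof in \cite{ADL19}.

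First I would prove boundedness: using the K-moduli theorem stated above (or Birkar's boundedness together with bounds on the CM-degree), the stacks $\mtc{M}^K(c)$ for $c\in(0,\frac{1}{r})$ fit into one bounded family, so there is a single projective scheme $Z$ together with a $\mb{Q}$-Gorenstein family $(\mtc{X},\mtc{D})\to Z$ such that every K-semistable pair with the prescribed Hilbert polynomial appears as some fiber. Next I would exploit the linearity trick: for any fixed pair $(X,D)$ and any prime divisor $E$ over $X$, the log discrepancy $A_{(X,cD)}(E)=A_{(X,0)}(E)-c\cdot\ord_E(\pi^*D)$ is affine linear in $c$, while an elementary volume computation shows that the normalized expected vanishing order $S_{(X,cD)}(E)$ is also a rational function of $c$ that is piecewise linear after clearing the denominator $(-K_X-cD)^n$. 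Consequently the numerator of $\beta_{(X,cD)}(E)$ is a polynomial in $c$ of bounded degree, and on the bounded family built above only finitely many valuations can ever compute the infimum $\delta(X,cD)=1$; this produces a finite set of walls $0<c_1<\cdots<c_{n-1}<\tfrac{1}{r}$ outside which the valuative criterion (Theorem \ref{32}) gives a locally constant K-semistable locus.

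Once the walls are located, the stack invariance between walls is immediate from the definition: on each open interval $(c_j,c_{j+1})$ the set of K-semistable $\mb{C}$-points is constant, and since the moduli functor is determined by K-semistability of fibers together with the fixed polarization data, the reduced Artin stack $\mtc{M}^K(c)$ does not change. At a wall $c=c_j$, the two open immersions $\mtc{M}^K(c_j\pm\varepsilon)\hookrightarrow\mtc{M}^K(c_j)$ follow from the openness of K-semistability (Theorem \ref{35}) applied to the universal family, combined with the observation that if $(X,c_j D)$ is K-semistable then by the linearity of $\beta_{(X,cD)}(E)$ in $c$ the pair remains K-semistable on at least one side of $c_j$ for every fixed divisor; the strictly stable locus on the wall is open and contains the images of both sides.

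Finally, to descend to projective birational morphisms, I would apply the existence and properness of the good moduli space $\ove{M}^K(c)\to\ove{M}^K(c_j)$ (using \cite{ABHLX20, BHLLX21}) together with the ampleness of the CM $\mb{Q}$-line bundle $\Lambda_{\CM,c}$ on $\ove{M}^K(c)$ \cite{XZ20,CP21}. The open immersion of stacks descends to a morphism of good moduli spaces that is an isomorphism on the K-stable locus, hence birational, and projectivity follows from the CM ampleness. The main technical obstacle is the finiteness of the wall set: one must argue, using boundedness plus a uniform bound on the coefficients of the polynomial $\beta_{(X,cD)}(E)$ as $(X,D)$ ranges in the family and $E$ ranges over all divisorial valuations that can possibly destabilize, that only finitely many valuations produce a wall in the open interval $(0,\tfrac{1}{r})$. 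This uniform bound is the heart of the wall-crossing theorem and is exactly where Zariski openness of $\delta\ge 1$ and the special $\mb{Q}$-complement trick from \cite{LXZ22} enter.
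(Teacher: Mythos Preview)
The paper does not give its own proof of this statement: it is stated in the preliminaries section as a known result, with the citation ``(cf.\ \cite[Theorem 1.2]{ADL19})'' serving in lieu of proof. So there is nothing in the paper to compare your proposal against.

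That said, your sketch is a reasonable outline of the strategy in \cite{ADL19}, with a couple of soft spots worth flagging. First, since $D\sim_{\mb{Q}}-rK_X$, a change of variables shows $S_{(X,cD)}(E)=(1-rc)\,S_{(X,0)}(E)$, so $\beta_{(X,cD)}(E)$ is genuinely affine linear in $c$, not merely piecewise rational; this simplifies the wall-finiteness step. Second, your justification of the open immersions $\mtc{M}^K(c_j\pm\varepsilon)\hookrightarrow\mtc{M}^K(c_j)$ is in the wrong direction: you need that K-semistability at $c_j\pm\varepsilon$ implies K-semistability at $c_j$, and linearity of $\beta$ alone does not give this (the inequality could fail as you move toward the wall). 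In \cite{ADL19} this is handled not by a pointwise $\beta$-argument but via the local VGIT presentation coming from Luna \'etale slices for Artin stacks (Alper--Hall--Rydh), under which the CM line bundle varies linearly in $c$; the inclusion of semistable loci at a wall then follows from standard VGIT, and the wall finiteness follows because a VGIT problem on a bounded parameter space has finitely many chambers. Your invocation of \cite{LXZ22} for finiteness is anachronistic for the $\mb{Q}$-Gorenstein smoothable setting, where the simpler VGIT argument suffices.
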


\subsection{Geometry of smooth algebraic curves of genus six}

In this part, we introduce some geometry of smooth projective curves of genus six and their K-moduli space. 

By Brill-Noether theorem (cf. \cite[Chapter V]{ACGH}), we know that a general curve $C$ of genus six has finitely many linear series of degree $6$ and dimension $2$, denoted by $g^2_6$. Using one of them, one can map this curve birationally to its image in $\mb{P}^2$ such that the image is a plane sextic curve with four nodal singularities. Blowing up $\mb{P}^2$ at these four points, we obtain a smooth del Pezzo surface of degree $5$, which is unique up to isomorphism. We denote the surface by $\Sigma_5$. The canonical embedding $C\hookrightarrow \Sigma_5$ realizes $C$ as a divisor on $\Sigma_5$ of the class $-2K_{\Sigma_5}$. In fact, a general genus six curve has exactly five $g^2_6$, and each curve with exactly five $g^2_6$ can be canonically embedded into $\Sigma_5$. There are also curves with less than five $g^2_6$, and these curves can be embedded into a unique quintic del Pezzo surface with at worst rational double points (cf. \cite[Claim 5.14]{AH81}).

A smooth genus six curve is called \emph{special} if it has infinitely many $g^2_6$. A special genus $6$ curve is of one of the following types:
\begin{enumerate}[(a)]
    \item smooth plane quintic curves,
    \item hyperelliptic curves,
    \item trigonal curves, or
    \item bielliptic curves.
\end{enumerate}

We will use the following property of trigonal curves later.

\begin{prop}
A smooth trigonal genus $6$ curve $C$ is canonically embedded 
\begin{enumerate}[(a)]
    \item either in $\mb{F}_0=\mb{P}^1\times\mb{P}^1$ of the class $\mtc{O}_{\mb{P}^1\times\mb{P}^1}(3,4)$;
    \item or in $\mb{F}_2$ of the class $3E+7F$, where $E,F$ are the divisor class of the negative section and a fiber.
\end{enumerate}
\end{prop}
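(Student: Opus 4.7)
The plan is to realize $C$ as a divisor in a rational normal scroll swept out by its trigonal pencil and then enumerate the possible scrolls, classifying the class of $C$ on each.

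First I would fix a $g^1_3$ on $C$ (unique when it exists by Martens' theorem for $g=6$) and use it to locate the canonical model $C\subset\bP^{g-1}=\bP^5$ inside a scroll. For any effective divisor $D$ in the pencil, geometric Riemann--Roch gives that the three points of $D$ span a line in $\bP^5$, since their projective span has dimension $\deg D-1-\dim|D|=3-1-1=1$. As $D$ varies over $\bP^1$, these lines sweep out a surface $S\subset\bP^5$, which is a rational normal scroll of degree $g-2=4$. One sees $C\subset S$ because every point of $C$ lies on a unique fiber line.

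Next I would classify such scrolls: any degree $4$ rational normal scroll in $\bP^5$ is projectively one of $S(2,2)\cong\bF_0$, $S(1,3)\cong\bF_2$, or $S(0,4)$ (a cone over a rational quartic curve in $\bP^4$, whose minimal resolution is $\bF_4$). The crux of the proof is ruling out the cone case. On the resolution $\bF_4$, with negative section $E$ ($E^2=-4$) and fiber $F$, a smooth trigonal genus $6$ curve $C$ (not containing $E$) would satisfy $C\sim 3E+bF$ and, by adjunction, $2g-2=(3E+bF)\cdot((3-2)E+(b-4-(-2))F)=4b-30$, giving $b=10$; but then $C\cdot E=-12+b=-2<0$, contradicting that $C$ is irreducible and distinct from $E$. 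Hence $S$ is smooth and $C$ lies on $\bF_0$ or $\bF_2$.

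Finally I would compute the class of $C$ on each surface via adjunction. On $\bF_0=\bP^1\times\bP^1$, write $C\sim aE_1+bE_2$ with $K=-2E_1-2E_2$; the trigonal pencil forces one of $C\cdot E_i=3$, so up to ordering $b=3$, and $2g-2=10$ gives $a(b-2)+b(a-2)=2ab-2a-2b=10$, hence $a=4$ and $C\in|\mtc O_{\bP^1\times\bP^1}(3,4)|$. On $\bF_2$, write $C\sim 3E+bF$ with $E^2=-2$ and $K=-2E-4F$; then $2g-2=(3E+bF)(E+(b-4)F)=-6+3(b-4)+b=4b-18=10$ gives $b=7$, so $C\sim 3E+7F$.

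The main obstacle is the cone case $S(0,4)$: one must verify carefully that no smooth trigonal genus $6$ curve can sit on the cone, which I handle by the negativity computation on $\bF_4$ above. The rest is classical scroll geometry and a routine adjunction computation.
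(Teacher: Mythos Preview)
Your proof is correct and arrives at the same conclusion as the paper's, but the organization is a bit different. The paper simply takes as known that the canonical model of a trigonal curve lies on some Hirzebruch surface $\mathbb{F}_e$, writes $C\sim 3E+kF$, and runs adjunction uniformly in $e$: one gets $10=4k-6e-6$, the irreducibility condition $C\cdot E\geq 0$ (i.e.\ $k\geq 3e$) yields $e\leq 8/3$, and integrality of $k=(3e+8)/2$ forces $e$ even, so $e\in\{0,2\}$. You instead identify the scroll concretely as a degree $g-2=4$ rational normal scroll in $\mathbb{P}^5$, so the scroll classification $S(2,2),\,S(1,3),\,S(0,4)$ already hands you $e\in\{0,2,4\}$ (no parity argument needed), and you eliminate $e=4$ by the same negativity $C\cdot E<0$. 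Your route supplies more geometric context and explains \emph{why} $e$ must be even; the paper's route is shorter once the initial fact about $\mathbb{F}_e$ is granted. One small slip to clean up: in your $\mathbb{F}_4$ adjunction the intermediate term should be $(b-6)F$ (since $K_{\mathbb{F}_4}=-2E-6F$), not $(b-4-(-2))F$; your final value $4b-30$ and hence $b=10$, $C\cdot E=-2<0$, are correct.
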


\begin{proof}
    It is well-known that the canonical model of a trigonal curve $C$ lies on a unique Hirzebruch surface $\mb{F}_e$, which is obtained by taking the union of the lines spanned each three points in the $g^1_3$. It is easy to check that the curves in the two listed linear series are of genus $6$, and a general one is smooth.
    
    Now suppose that the class of $C$ as a divisor on $\mb{F}_e$ is $3E+kF$, where $E,F$ are the class of the negative section and a fiber respectively. As $C$ is irreducible, then $k\geq 2e$. In our case, the genus of $C$ is six, so by adjunction formula we have that $$10=2g-2=(C.C+K_{\mb{F}_e})=(3E+kF.E+(k-2-e)F)=4k-6e-6\geq 6e-6.$$ It follows that $e\leq 2$. Notice also that $e$ has to be an even number. Therefore we have that either $e=0$ or $e=2$, and $k=4$ and $k=7$ correspondingly.  
\end{proof}

Combining the following results, we see that the moduli space $M_6$ of smooth genus 6 curves admits a stratification $$M_6=U\sqcup Z_{<5}\sqcup Z_q\sqcup Z_h\sqcup Z_b\sqcup Z_t,$$ where $U$ and $Z_{<5}$ are non-special curves with exactly five and less than five $g^2_6$ respectively, and $Z_q,Z_h,Z_b,Z_t$ are the locus of quintic, hyperelliptic, bielliptic, and trigonal curves, respectively. By counting dimension, we see that the closure of $Z_{<5}$ is a divisor in $M_6$, which generates $\Pic(M_6)_{\mb{Q}}$. Taking the singular DM-stable curves into consideration, we also have that $$\ove{M}_6\setminus M_6=\Delta_0\cup  \Delta_1\cup\Delta_2\cup\Delta_3,$$ where $\Delta_0$ is the closure of the locus of irreducible nodal curves and $\Delta_i$ is the closure of the locus of reducible curves with one component of genus $i$.

\subsection{Classification of quintic del Pezzo surfaces}\label{102}

In this section, we list the classification of quintic del Pezzo surfaces of Gorenstein index $\leq 3$.

The following result enables us to classify all Gorenstein quintic del Pezzo surfaces, that is, those surfaces with at worst rational double points (or ADE singularities).

\begin{theorem}\textup{(cf. \cite[Theorem 3.4]{HW81})}
Let $X$ be a normal projective Gorenstein del Pezzo surface of degree $5$. Then $X$ is the anticanonical-ample model of blow-ups of $\mb{P}^2$ along four points in almost general position.
\end{theorem}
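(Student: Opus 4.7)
The plan is to pass to the minimal resolution of $X$, recognize the resolution as a smooth weak del Pezzo surface of degree $5$, and then classify such surfaces by running Mori contractions down to $\mathbb{P}^2$.

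First I would take $\pi \colon \widetilde{X} \to X$ the minimal resolution. Because $X$ is Gorenstein with at worst rational double points, this resolution is crepant, i.e.\ $K_{\widetilde{X}} = \pi^{*}K_X$. Pulling back the ample Cartier divisor $-K_X$ then shows that $-K_{\widetilde{X}}$ is nef with $(-K_{\widetilde{X}})^2 = (-K_X)^2 = 5 > 0$, so $-K_{\widetilde{X}}$ is nef and big. In other words $\widetilde{X}$ is a smooth weak del Pezzo surface of degree $5$, and the $(-2)$-curves contracted by $\pi$ are precisely the integral curves $\Gamma \subset \widetilde{X}$ with $(-K_{\widetilde{X}}).\Gamma = 0$, forming an ADE configuration.

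Next I would classify smooth weak del Pezzo surfaces $\widetilde{X}$ of degree $5$. The strategy is an induction by Mori contractions: on any such $\widetilde{X}$ one exhibits a $(-1)$-curve $E$ (for instance via Riemann--Roch and the fact that $K_{\widetilde{X}}^2 \geq 1$, one produces effective anti-canonical decompositions containing a $(-1)$-component), contracts it via Castelnuovo to get a smooth surface $\widetilde{X}'$ with $-K_{\widetilde{X}'}$ still nef and big of degree $6$, and iterates. After four contractions one arrives at a smooth surface of Picard rank $1$ with self-intersection $9$ of its anti-canonical class; since the only smooth minimal rational surfaces are $\mathbb{P}^2$ and the Hirzebruch surfaces $\mathbb{F}_n$, and the degree-$5$ setting leaves enough room to avoid $\mathbb{F}_n$ as the terminal step (one can always pick the $(-1)$-curve so that we end on $\mathbb{P}^2$; alternatively, any $\mathbb{F}_n$ appearing mid-way can be converted to $\mathbb{P}^2$ by an elementary transformation using a disjoint $(-1)$-curve), one concludes that $\widetilde{X}$ is the blow-up of $\mathbb{P}^2$ at four (possibly infinitely near) points $p_1,\dots,p_4$. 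The condition that no $(-n)$-curve with $n \geq 3$ appears on $\widetilde{X}$ -- equivalent to $-K_{\widetilde{X}}$ being nef -- translates exactly into the geometric requirement that no three of the $p_i$ are collinear (in either the reduced or infinitely-near sense), which is the ``almost general position'' hypothesis.

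Finally I would identify $X$ with the anticanonical model of $\widetilde{X}$. Since $\pi$ is crepant and $\pi_{*}\mathcal{O}_{\widetilde{X}} = \mathcal{O}_X$ (rational singularities), one has $H^0(\widetilde{X},-mK_{\widetilde{X}}) = H^0(X,-mK_X)$ for all $m \geq 0$, so the section rings agree. As $-K_X$ is ample, $X = \mathrm{Proj}\,R(X,-K_X) = \mathrm{Proj}\,R(\widetilde{X},-K_{\widetilde{X}})$ is precisely the ample model of $-K_{\widetilde{X}}$, and this Proj contracts exactly the $(-2)$-curves, producing the ADE singularities of $X$.

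The main obstacle is the inductive classification step: one must carefully ensure that at each contraction the resulting surface remains a weak del Pezzo and that the terminal model is $\mathbb{P}^2$ rather than some $\mathbb{F}_n$. Handling this requires a careful analysis of the cone of effective curves at each stage and the explicit combinatorics of $(-1)$- and $(-2)$-curves on a degree-$5$ weak del Pezzo, together with the translation of the nefness of $-K_{\widetilde{X}}$ into the precise ``almost general position'' condition on the four blown-up points.
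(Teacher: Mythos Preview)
The paper does not prove this theorem; it is quoted from \cite{HW81} and used only as input to the subsequent classification of ADE quintic del Pezzo surfaces. So there is no ``paper's own proof'' to compare against.

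Your outline is the standard argument and is essentially correct. A couple of remarks on the one point you flag as an obstacle. Once you know $\widetilde{X}$ is a smooth rational surface with $K_{\widetilde{X}}^2=5$, Noether's formula gives $\rho(\widetilde{X})=5$, so after four contractions of $(-1)$-curves you reach a smooth rational surface of Picard rank $1$, which can only be $\mathbb{P}^2$. The genuine subtlety is that the chain of contractions might pass through $\mathbb{F}_0$ or $\mathbb{F}_2$ at degree $8$, where there are no $(-1)$-curves; the clean fix is to observe that a blow-up of $\mathbb{F}_0$ or $\mathbb{F}_2$ at one point is also a blow-up of $\mathbb{F}_1$ at one point (an elementary transformation), hence a blow-up of $\mathbb{P}^2$ at two points, so in all cases $\widetilde{X}$ is a blow-up of $\mathbb{P}^2$ at four possibly infinitely near points. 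The translation ``$-K_{\widetilde{X}}$ nef $\Leftrightarrow$ almost general position'' is then a direct check on the proper transforms of lines and of the exceptional curves. Your identification of $X$ with the anticanonical model via crepancy and rationality of the singularities is fine as stated.
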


According to this, we can give a more detailed classification of them: a normal projective Gorenstein del Pezzo surface of degree $5$ is isomorphic to one of the following surfaces.
\begin{enumerate}[(i)]
    \item $X=\Sigma_5$ is the blow-up of $\mb{P}^2$ along four general points. In this case, $X$ is smooth;
    \item $X=X_1$ is obtained by blowing up $\mb{P}^2$ at three general points $p_1,p_2,p_3$, then at a point $p_4$ on the exceptional divisor over $p_3$ and contracting the (-2)-curve. In this case, $X$ has an $A_1$-singularity.
    \item $X=X_{1,1}$ is obtained by blowing up $\mb{P}^2$ at two general pairs of infinitely close points $(p_1,p_2)$ and $(p_3,p_4)$, then contracting the two (-2)-curves. In this case, $X$ has two $A_1$-singularities.
    \item $X=X_{2}$ is obtained by blowing up $\mb{P}^2$ at three distinct collinear points $p_1,p_2,p_3$, and at a general point on the exceptional divisor over $p_3$, then contracting the two (-2)-curves. In this case, $X$ has an $A_2$-singularity.
    \item $X=X_{1,2}$ is obtained by blowing up $\mb{P}^2$ at two distinct points $p_1,p_2$, and at a general point on the exceptional divisor over $p_1$ and the point on the exceptional divisor over $p_2$ which corresponds to the line spanned by $p_1,p_2$, then contracting the three (-2)-curves. In this case, $X$ has an $A_1$-singularity and an $A_2$-singularity.
    \item $X=X_{3}$ is obtained by blowing up $\mb{P}^2$ at two distinct points $p_1,p_2$, and at the point $p_3$ on the exceptional divisor over $p_2$ which corresponds to the line spanned by $p_1,p_2$ and a general point on the exceptional divisor over $p_3$, then contracting the three (-2)-curves. In this case, $X$ has an $A_3$-singularity.
    \item $X=X_{4}$ is obtained by blowing up $\mb{P}^2$ at a point $p_1$, a general point $p_2$ on the exceptional divisor over $p_1$, a general point $p_3$ on the exceptional divisor over $p_2$, and a general point $p_4$ on the exceptional divisor over $p_3$, then contracting the four (-2)-curves. In this case, $X$ has an $A_4$-singularity.
\end{enumerate}

\begin{remark}
\textup{Here the index denotes the singularities appearing on the surface. For example, the surface $X_{1,2}$ means the surface has an $A_1$-singularity and an $A_2$-singularity.}
\end{remark}

In \cite{Nak07}, Nakayama gives a list of del Pezzo surfaces of Gorenstein index $2$ with $(-K_X)^2=5$. As the notations in the original paper is complicated, we give a more explicit description here.

\begin{theorem}\label{40} \textup{(cf. \cite[Table 6]{Nak07})}
Each del Pezzo surface $X$ of Cartier index $2$ with $(-K_X)^2=5$ is the canonical model of one of the following surfaces:
\textup{\begin{enumerate}[(i)]
    \item $[1]_0$: blow-up of $\mb{P}^2$ along $5$ collinear points;
    \item $[0;1,0]_0$: blow-up of $\mb{F}_0$ along $4$ collinear points on a fixed section;
    \item $[2;1,1]_{+}(0,0)$: blow-up of $\mb{F}_2$ along $3$ collinear points on a fiber $l$ outside the negative section $\sigma$ and a point on the negative section;
    \item $[2;1,1]_{+}(1,1)$: blow-up of $\mb{F}_2$ along $2$ points on a fiber $l$ outside the negative section $\sigma$ and a length $2$ subscheme $\Delta$ supported at $\sigma\cap l$ such that $\length(\Delta\cap l)=1$ and $\length(\Delta\cap \sigma)=1$;
    \item $[2;1,1]_{+}(1,2)$: blow-up of $\mb{F}_2$ at one point on a fiber $l$ outside the negative section $\sigma$ and a length $3$ subscheme $\Delta$ supported at $\sigma\cap l$ such that $\length(\Delta\cap l)=2$ and $\length(\Delta\cap \sigma)=1$;
    \item $[2;1,1]_{+}(1,3)$: blow-up of $\mb{F}_2$ at a length $4$ subscheme $\Delta$ supported at $\sigma\cap l$ such that $\length(\Delta\cap l)=3$ and $\length(\Delta\cap \sigma)=1$.
\end{enumerate}}
\end{theorem}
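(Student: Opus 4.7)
The plan is to apply Nakayama's classification \cite[Table 6]{Nak07} directly, then translate each symbolic entry $[1]_0$, $[0;1,0]_0$, and $[2;1,1]_{+}(a,b)$ into the explicit iterated blow-up picture stated here. Since the abstract six-case list is already established in \emph{loc. cit.}, the only work is to verify that each geometric construction recovers Nakayama's corresponding configuration of exceptional curves and singularities.

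First I would describe the general setup. Let $\pi\colon\wt{X}\rightarrow X$ be the minimal resolution. The Cartier index $2$ condition forces at least one non-Gorenstein klt singularity; the simplest such singularity with a single exceptional curve is $\frac{1}{4}(1,1)$, corresponding to a $(-4)$-curve on $\wt{X}$ with discrepancy $-\tfrac{1}{2}$. All other $\pi$-exceptional curves are $(-2)$-curves producing ADE singularities. Hence the dual graph of negative curves on $\wt{X}$ determines the singularity type, and the comparison with Nakayama's list reduces to identifying, for each proposed $\wt{X}$, the unique $(-4)$-curve together with the chains of $(-2)$-curves. The identity $(\pi^{*}K_X)^2 = K_{\wt{X}}^2 + 1$ for a single $(-4)$-curve contraction will be used to check $(-K_X)^2 = 5$ throughout.

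For case (i), taking $\wt{X}$ to be $\mb{P}^2$ blown up at $5$ collinear points, the strict transform of the line has self-intersection $1-5=-4$ and is the only negative curve besides the exceptional divisors of the blow-up; contracting it produces a surface with a single $\frac{1}{4}(1,1)$ singularity, and $(-K_X)^2 = (9-5)+1 = 5$. Case (ii) is analogous on $\mb{F}_0$: the section containing the $4$ blown-up points becomes the $(-4)$-curve. For cases (iii)--(vi) on $\mb{F}_2$, I would track the strict transforms of the negative section $\sigma$, the fiber $l$, and the successive exceptional divisors under the blow-ups along the length-$k$ subschemes $\Delta$ supported at $\sigma\cap l$, showing that $\sigma$ remains the $(-4)$-curve and that the remaining contracted curves form chains whose dual graphs match Nakayama's prescribed ADE singularities.

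The main obstacle is the bookkeeping in cases (iv)--(vi): blowing up a non-reduced length-$k$ subscheme at infinitely near directions tangent to $l$ or to $\sigma$ produces a chain of exceptional curves whose self-intersections shift as later blow-ups pass through earlier ones. I would handle this by decomposing each $\Delta$ into a sequence of ordinary point blow-ups at explicit infinitely near centers, then iterate adjunction and intersection computations to read off the dual graph. Once these graphs match Nakayama's entries, the anticanonical contractions of $\wt{X}$ in our six constructions produce exactly the six surfaces in \cite[Table 6]{Nak07}, completing the identification.
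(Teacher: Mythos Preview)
The paper does not supply its own proof of this theorem; it is stated with a direct citation to \cite[Table 6]{Nak07} and used as input. So there is nothing to compare your approach against in the paper itself, and your overall plan (invoke Nakayama's table, then verify that each explicit blow-up construction realizes the corresponding entry) is the natural thing to do.

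However, there is a genuine error in your outline for cases (iii)--(vi). You assert that in these cases ``$\sigma$ remains the $(-4)$-curve'' and that the unique non-Gorenstein singularity is $\frac{1}{4}(1,1)$ throughout. This is false. On $\mb{F}_2$ the negative section $\sigma$ starts with $\sigma^2=-2$; in case (iii) you blow up three points on a fiber $l$ away from $\sigma$ and one general point on $\sigma$, so the strict transforms satisfy $\wt{l}^2=-3$ and $\wt{\sigma}^2=-3$, and they still meet at $\sigma\cap l$. The contracted configuration is the chain $[-3,-3]$, whose Hirzebruch--Jung fraction is $8/3$, giving a $\frac{1}{8}(1,3)$ singularity (as the paper itself records in Section 5.2). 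In case (iv) the length-$2$ scheme $\Delta$ at $\sigma\cap l$ separates $\wt{l}$ and $\wt{\sigma}$ by an intermediate $(-2)$-curve, producing the chain $[-3,-2,-3]$ and hence a $\frac{1}{12}(1,5)$ singularity (again noted explicitly in the paper, just before Lemma \ref{118}). Cases (v) and (vi) insert further $(-2)$-curves into the chain. So your bookkeeping scheme, and in particular the formula $(\pi^{*}K_X)^2=K_{\wt{X}}^2+1$ justified only for a single $(-4)$-curve, does not apply uniformly. (By a pleasant accident the discrepancies in the $[-3,-3]$ and $[-3,-2,\ldots,-2,-3]$ chains are all $-\tfrac12$, so the numerical correction to $K^2$ happens to stay $+1$; but your stated reason for this is wrong, and the singularity types you would read off are incorrect.) You should instead compute the actual dual graph of negative curves in each of (iii)--(vi) and match it to the index-$2$ cyclic quotient singularity $\frac{1}{4k}(1,2k-1)$ with resolution chain $[-3,(-2)^{k-2},-3]$.
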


There is also a classification of del Pezzo surfaces of Gorenstein index $3$ (cf. \cite[Table 7]{FY17}). If $(X,cD)$ is a del Pezzo pair of Cartier index $3$ appearing in some K-moduli space $\ove{M}^K(c)$, then the singularities at which $X$ has Cartier index $3$ must lie on the support of $D$. Combining \cite[Table 7]{FY17} and Lemma \ref{31}, the only index $3$ surfaces that may appear in $\ove{M}^K(c)$ are of type $[2,1;2]_{1G}$ or $[2,2;1]_{1A}$, i.e. the surfaces obtained from the blow-up of $\mb{F}_2$ along 5 points on a fiber, or the the blow-up of $\mb{F}_2$ along 2 points on a fiber and $3$ points on the negative section. Notice that these two types of surfaces have $\frac{1}{9}(1,2)$ singularity.

However, we will show later (cf. Proposition \ref{33}) that such surfaces in fact do not appear in the K-moduli space $\ove{M}^K(c)$ for any $0<c<1/2$. As a consequence, any Gorenstein index $3$ surface in the class $[2,1;2]_{1G}$ does not appear in the K-moduli by the openness of the K-stability.

\section{Relation to the GIT moduli space and the first wall}

In this section, we study the K-moduli space $\ove{M}^K(c)$ for $0<c\ll1$. In particular, we prove Theorem \ref{72} (1). Moreover, we will analyze the first wall $c_1=\frac{1}{17}$ and provide all computational details for the further use.

\subsection{Index bound and restrictions on singularities}

We will need the following result, which gives a restriction on the surfaces that can appear in the moduli space.

\begin{prop}\label{31}
Let $(X,cD)$ be a K-semistable log Fano pair which admits a $\mb{Q}$-Gorenstein smoothing to $(\Sigma_5,cC)$, where $C\in |-2K_{\Sigma_5}|$ and $0<c<1/2$. Suppose that $x\in X$ is a point such that the Gorenstein index $\Ind(x,K_X)\geq3$. Then we have $\Ind(x,K_X)=3$ and locally it is a $\frac{1}{9}(1,2)$-singularity.
\end{prop}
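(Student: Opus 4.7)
My approach combines Theorem \ref{37} (KSB classification of $\mb{Q}$-Gorenstein smoothable klt surface singularities), Theorem \ref{34} (the Fujita--Liu local volume inequality), and an elementary semi-invariant computation on the index-one cover of $x$.

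The first step is to reduce to the cyclic-quotient case. By Theorem \ref{37}, $x$ is either an ADE singularity (Gorenstein, index $1$, excluded by hypothesis) or of the form $\frac{1}{dn^2}(1, dna - 1)$ with $\gcd(a, n) = 1$. A direct computation of $\omega_X$ on the cover shows the Cartier index of $K_X$ at such a point is precisely $n$, so $\Ind(x, K_X) \geq 3$ forces $n \geq 3$. The goal then becomes to prove $n = 3$ and $d = 1$, which gives the singularity $\frac{1}{9}(1, 3a - 1)$ with $a \in \{1, 2\}$; since $2 \cdot 5 \equiv 1 \pmod 9$, both are isomorphic to $\frac{1}{9}(1, 2)$.

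Using $(-K_X)^2 = 5$ (invariant under $\mb{Q}$-Gorenstein deformation) and $D \sim -2K_X$, Theorem \ref{34} yields $5(1-2c)^2 \leq \tfrac{9}{4}\,\wh{\vol}(x, X, cD)$. The next observation is that $n \geq 3$ forces $-2K_X$ to have Cartier index $n/\gcd(n,2) \geq 2$ at $x$, hence $D$ must pass through $x$ (otherwise $D$ would be Cartier near $x$ and $-2K_X$ would be too). Passing to the $\mu_{dn^2}$-cover $\pi \colon (\mb{A}^2, 0) \to (X, x)$ with weights $(1, dna - 1)$, I write $\pi^* D = (f)$ with $f$ semi-invariant of character $[-2K_X] \equiv 2dna \pmod{dn^2}$. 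Taking $E$ to be the Koll\'ar minimizer of $\wh{\vol}(x, X) = 4/(dn^2)$, namely the descent of the origin-blowup on $\mb{A}^2$, one has $A_X(E) = 2$ and $\vol_X(E) = 1/(dn^2)$, while the Cartier-index identity $\pi^*(kD) = (f^k)$ gives $\ord_E D = \mult_0 f \geq m_0$, where
\[
m_0 := \min\!\bigl\{\, i + j \,:\, i, j \in \bZ_{\geq 0},\ i + (dna - 1)\,j \equiv 2dna \pmod{dn^2}\,\bigr\}.
\]
The volume inequality then becomes
\[
5(1 - 2c)^2 \;\leq\; \frac{9\,(2 - c\,m_0)^2}{4\,dn^2}.
\]

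The conclusion will follow from a finite search over pairs $(i,j)$ with $i + j \leq 4$. The diagonal $(2, 2)$ always has character $2 + 2(dna - 1) = 2dna$, so $m_0 \leq 4$ for every admissible $(d, n, a)$. A direct check of the pairs with $i + j \leq 3$ shows the congruence is solvable only for $(d, n, a) = (1, 3, 1)$ (via $(0, 3)$) and $(1, 3, 2)$ (via $(3, 0)$); hence $m_0 = 4$ whenever $(d, n) \neq (1, 3)$. In that case $2 - cm_0 = 2(1 - 2c)$ and the inequality collapses to $5 \leq 9/(dn^2)$, contradicting $dn^2 \geq 9$. Therefore $(d, n) = (1, 3)$ and $x$ is a $\frac{1}{9}(1, 2)$-singularity. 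The main technical ingredient is the identification $\ord_E D = \mult_0 f$ via the Cartier-index pullback; the subsequent arithmetic case check is routine.
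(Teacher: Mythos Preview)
Your proof is correct and follows essentially the same route as the paper: KSB classification to reduce to $\frac{1}{dn^2}(1,dna-1)$, the Fujita--Li--Liu local volume inequality, and the congruence $i+(dna-1)j\equiv 2dna\pmod{dn^2}$ on the index-one cover to force $(d,n)=(1,3)$. The one substantive difference is that the paper invokes \cite[Theorem~1.3]{XZ21} (multiplicativity of $\wh{\vol}$ under the degree-$dn^2$ cover) to obtain $dn^2\cdot\wh{\vol}(x,X,cD)\le(2-c\ord_{\tilde{x}}\tilde{D})^2$, whereas you bypass this by testing the single valuation $v=\ord_F|_{K(X)}$ directly; your route is slightly more elementary but otherwise identical in structure and in the decisive monomial case-check.
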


\begin{proof}
Let $n:=\Ind(x,K_X)\geq3$ be the Gorenstein index at $x$. If $x\notin D$, then the assumption $D+2K_X\sim 0$ implies that $n\leq 2$. Thus we must have that $x\in D$. As $x\in X$ is a $\mb{Q}$-Gorenstein smoothable singularity of index $n\geq3$, then it is a cyclic quotient singularity of type $\frac{1}{dn^2}(1,dna-1)$, where $\gcd(a,n)=1$ and $0<a<n$ (cf. \cite[Proposition 3.10]{KSB88}). Since $(X,cD)$ is K-semistable, then it follows from Theorem \ref{34} that $$5(1-2c)^2=(-K_{X}-cD)^2\leq \frac{9}{4}\widehat{\vol}(x,X,cD).$$  
Let $(\tilde{x},\tilde{X},\tilde{D})\rightarrow(x,X,D)$ be a smooth covering at $x$ of degree $dn^2$. By \cite[Theorem 1.3]{XZ21}, one has that
\begin{equation}\label{38}
    dn^2\cdot\frac{4}{9}\cdot 5(1-2c)^2\leq dn^2\widehat{\vol}(x,X,cD)=\widehat{\vol}(\tilde{x},\tilde{X},c\tilde{D})\leq(2-c\ord_{\tilde{x}}\tilde{D})^2<4.
\end{equation} 
If $\ord_{\tilde{x}}\tilde{D}\geq 4$, then we have $$dn^2\cdot \frac{4}{9}5(1-2c)^2\leq 4(1-2c)^2,$$ and thus $dn^2\leq 9/5$. This forces $(d,n)=(1,1)$, which is a contradiction.

Now we have that $\ord_{\tilde{x}}\tilde{D}\leq 3$. Let $u,v$ be local coordinates near $\tilde{x}\in\tilde{X}$ such that the cyclic group action is scaling on each coordinate, and $u^iv^j$ be a monomial appearing in the local equation of $\tilde{D}$ such that $i+j=\ord_{\tilde{x}}\tilde{D}$. The relation $D+2K_X\sim 0$ implies that $$i+(dna-1)j\equiv 2dna \mod dn^2.$$ In particular, we have $i\equiv j \mod dn$. If $i=j$, then both of them are equal to $1$, and $dn^2|dna$, which implies that $n=1$ since $\gcd(n,a)=1$. If $i\neq j$, then either $dn|2$ or $dn|3$. Since $n\geq 3$, we must have $(d,n)=(1,3)$, and hence $x\in X$ is a $\frac{1}{9}(1,2)$-singularity (notice that $\frac{1}{9}(1,2)$-singularity and $\frac{1}{9}(1,5)$-singularity are the same).   

\end{proof}

\subsection{The first chamber: $\ove{M}^K(\varepsilon)$}

Let ${\bf{P}}=|-2K_{\Sigma_5}|$ be the complete linear series. Recall that we define the GIT moduli stack to be $\mtc{M}^{\GIT}:=\left[{\bf{P}}/\Aut(\Sigma_5)\right]$ and its good moduli space $\ove{M}^{\GIT}:={\bf{P}}/\Aut(\Sigma_5)$. In fact, as $\Aut(\Sigma_5)\simeq \mtf{S}_5$ is finite, this is a Deligne-Mumford stack with its coarse moduli space.

\begin{theorem}\label{55}
Let $0<\varepsilon\ll 1$ be a rational number. Then there is an isomorphism of Artin stacks $\mtc{M}^{K}(\varepsilon)\simeq {\mtc{M}}^{\GIT}$, which descends to an isomorphism $\ove{M}^{K}(\varepsilon)\simeq \ove{M}^{\GIT}$ of good moduli spaces. 
\end{theorem}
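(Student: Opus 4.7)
The strategy is to realize both stacks as the moduli of pairs $(\Sigma_5, \varepsilon C)$ with $C \in \mathbf{P} := |{-2K_{\Sigma_5}}|$, modulo $\Aut(\Sigma_5) = \mathfrak{S}_5$. Since $\mathfrak{S}_5$ is finite, every $C \in \mathbf{P}$ is automatically GIT-stable, so $\mathcal{M}^{\GIT} = [\mathbf{P}/\mathfrak{S}_5]$ is a smooth Deligne--Mumford stack of dimension $15$. The universal divisor on $\Sigma_5 \times \mathbf{P}$ produces a classifying morphism $\Phi : \mathbf{P} \to \mathcal{M}^K(\varepsilon)$; it is $\mathfrak{S}_5$-equivariant and descends to $\widetilde{\Phi} : \mathcal{M}^{\GIT} \to \mathcal{M}^K(\varepsilon)$. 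The proof proceeds in two steps: (i) $\Phi$ is well-defined, i.e., $(\Sigma_5, \varepsilon C)$ is K-semistable for every $C \in \mathbf{P}$; (ii) every K-polystable pair in $\mathcal{M}^K(\varepsilon)$ has $X \simeq \Sigma_5$. Together with matching of automorphism groups, bijectivity on geometric points then upgrades to an isomorphism of stacks.

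For (i), I will show that $(\Sigma_5, \varepsilon C)$ is K-stable for every $C \in \mathbf{P}$ once $\varepsilon \ll 1$. The smooth quintic del Pezzo $\Sigma_5$ is K-stable ($\delta(\Sigma_5) > 1$, classical). Applied to smooth $C \in \mathbf{P}^{\mathrm{sm}}$, for which $(\Sigma_5, \tfrac{1}{2}C)$ is klt, Theorem \ref{36} yields K-stability of $(\Sigma_5, cC)$ for all $c \in (0, 1/2)$, so the K-stable locus in $\mathbf{P}$ contains a nonempty Zariski open subset for every such $c$. Combined with openness of K-semistability in $\mathbb{Q}$-Gorenstein families (Theorem \ref{35}) and the fact that $\beta_{(\Sigma_5, \varepsilon C)}(E) \to \beta_{\Sigma_5}(E) > 0$ uniformly in $C$ and in normalized $E$ as $\varepsilon \to 0$ (using compactness of $\mathbf{P}$ and boundedness of the relevant divisorial valuations over $\Sigma_5$), K-stability then extends to every $C \in \mathbf{P}$ once $\varepsilon$ is small enough.

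The central step (ii) is the reverse inclusion. Let $(X, \varepsilon D) \in \mathcal{M}^K(\varepsilon)$ be K-semistable. For any $x \in X$, Theorem \ref{34} gives
$$
\widehat{\vol}(x, X) \;\geq\; \widehat{\vol}(x, X, \varepsilon D) \;\geq\; \tfrac{4}{9}(-K_X - \varepsilon D)^2 \;=\; \tfrac{20}{9}(1-2\varepsilon)^2.
$$
By the classification of $\mathbb{Q}$-Gorenstein smoothable klt surface singularities (Theorem \ref{37}), every non-smooth such germ is either ADE or a cyclic quotient $\tfrac{1}{dn^2}(1, dna-1) \simeq \mathbb{A}^2/\mu_{dn^2}$. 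Using the cyclic-quotient formula $\widehat{\vol}(\mathbb{A}^2/\mu_n) = 4/n$, one checks that the maximum normalized volume over all non-smooth such singularities is $2$, attained only at the $A_1$ singularity. The displayed inequality then forces $\tfrac{20}{9}(1-2\varepsilon)^2 \leq 2$ at any singular point, which fails whenever $\varepsilon < \tfrac{1}{2}(1 - 3/\sqrt{10})$. Hence $X$ must be smooth, and by uniqueness of the smooth quintic del Pezzo, $X \simeq \Sigma_5$.

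Combining (i) and (ii), $\widetilde{\Phi}$ is bijective on geometric points, and the automorphism groups agree on both sides (each equals $\stab_{\mathfrak{S}_5}(C)$ for the pair $(\Sigma_5, \varepsilon C)$). Since both source and target are smooth Deligne--Mumford stacks of the same dimension $15$, $\widetilde{\Phi}$ is an isomorphism, which descends to $\overline{M}^{\GIT} \simeq \overline{M}^K(\varepsilon)$ on good moduli spaces. The main anticipated obstacle is step (ii): the elementary bound $\tfrac{20}{9} > 2$ only gives the conclusion for very small $\varepsilon$, so pushing the isomorphism all the way up to the first wall $c_1 = \tfrac{1}{17}$ requires the sharper singularity restrictions of Proposition \ref{31}; however, for the "$\varepsilon \ll 1$" regime of the present statement, the volume bound alone suffices.
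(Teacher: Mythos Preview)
Your approach is essentially the same as the paper's: step (ii) is identical (the local volume bound $\tfrac{20}{9}(1-2\varepsilon)^2$ versus the maximum $\widehat{\vol}=2$ at an $A_1$ point forces $X$ smooth), and the overall architecture (universal family on $\mathbf{P}$, $\mathfrak{S}_5$-equivariance, descent) matches. The one place where the paper is cleaner is step (i): you argue via uniform convergence of $\beta_{(\Sigma_5,\varepsilon C)}(E)\to\beta_{\Sigma_5}(E)$ over both $C\in\mathbf{P}$ and ``normalized $E$,'' which implicitly requires knowing that the valuations computing $\delta(\Sigma_5)$ form a bounded family---true, but not justified in your sketch. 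The paper sidesteps this entirely by invoking K-stable $\Leftrightarrow$ uniformly K-stable \cite{LXZ22} together with openness of uniform K-stability in families \cite{Fuj19b}; compactness of $\mathbf{P}$ then yields a single $\varepsilon$ valid for all $C$, without any direct manipulation of $\beta$-invariants.
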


\begin{proof}

By \cite[Corollary 1.8]{PW18}, the smooth quintic del Pezzo surface $\Sigma_5$ is K-stable. Since being K-stable is equivalent to being uniform K-stable (cf. \cite[Theorem 1.6]{LXZ22}), which is an open condition in the sense of \cite{Fuj19b}, then for any divisor $D\in|-2K_{\Sigma_5}|$, we have that $(X,\varepsilon D)$ is K-stable for $0<\varepsilon\ll1$. 

We now show that if $(X,\varepsilon D)$ is K-semistable, then we have $X$ is smooth, and hence $X\simeq \Sigma_5$ and $D\in|-2K_{\Sigma_5}|$. For any point $x\in X$, by \cite[Proposition 4.6]{LL19}, one has that $$5(1-2\varepsilon)^2=(-K_X-\varepsilon D)^2\leq \left(\frac{3}{2}\right)^2\widehat{\vol}(x,X,D)\leq\frac{9}{4}\widehat{\vol}(x,X).$$ If $x$ is a singular point on $X$, then locally we have $(x\in X)\simeq(\mb{C}^2/G)$ for some nontrivial finite group $G$. Then it follows from in \cite[Theorem 2.7]{LX19} that $$|G|\leq 4\cdot\frac{9}{4}\cdot\frac{1}{5(1-2\varepsilon)^2}<2$$ for any $0<\varepsilon\ll1$. Thus $G$ is the trivial group and $X$ is smooth as desired.

Now let $\pi:(\Sigma_5\times \bf{P},\varepsilon \mtc{D})\rightarrow \bf{P}$ be the natural family of K-stable log Fano pairs over $\bf{P}$, which is $\mtf{S}_5$-equivariant. Notice that the K-stability and GIT-stability is trivially equivalent as each fiber (resp. point on the base) is K-stable (resp. GIT-stable). Therefore, by the universality of K-moduli and GIT-moduli stacks, there exists a natural isomorphism $\psi:\mtc{M}^{\GIT}\rightarrow \mtc{M}^K(\varepsilon)$ between Artin stacks, which descends to an isomorphism $\ove{M}^{K}(\varepsilon)\simeq \ove{M}^{\GIT}$ of good moduli spaces.

\end{proof}

\begin{remark}\textup{
Notice that the Picard number of $\ove{M}^K({\varepsilon})$ is one since it is a finite quotient of the projective space $\bf{P}$. The locus consisting of $[(\Sigma_5,\varepsilon C)]$ with $C$ a singular curve is a divisor $Z_0$ in $\ove{M}^K(\varepsilon)$. The natural birational map $\ove{M}^K(\varepsilon)\dashrightarrow \ove{M}_6$ will send $Z_0$ to $\Delta_0$, the locus of irreducible nodal curves.
}\end{remark}

We will show in the following section that the first wall is $c_1=1/17$ (cf. Proposition \ref{5}), which will imply the following.

\begin{corollary}
For any rational number $o<c<1/17$, there is an isomorphism of Artin stacks $\mtc{M}^{K}(c)\simeq {\mtc{M}}^{\GIT}$, which descends to an isomorphism $\ove{M}^{K}(c)\simeq \ove{M}^{\GIT}$ of good moduli spaces. 
\end{corollary}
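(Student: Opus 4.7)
The plan is to deduce this directly from Theorem \ref{55} by invoking the wall-crossing structure of K-moduli, once we know that no wall occurs strictly below $\tfrac{1}{17}$. More precisely, the wall-crossing theorem recalled in the excerpt guarantees the existence of rational numbers $0=c_0<c_1<\cdots<c_n=\tfrac{1}{r}$ (here $r=2$) such that the K-moduli stack $\mtc{M}^K(c)$ is constant on each open interval $(c_j,c_{j+1})$. So once Proposition \ref{5} is proved, asserting that the smallest wall is $c_1=\tfrac{1}{17}$, it follows that $\mtc{M}^K(c)$ is canonically identified with $\mtc{M}^K(\varepsilon)$ for every $c\in(0,\tfrac{1}{17})$ and all sufficiently small $\varepsilon>0$.

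Concretely, I would argue as follows. Fix any rational $c\in(0,\tfrac{1}{17})$ and choose a rational $\varepsilon$ with $0<\varepsilon<c$ small enough that the conclusion of Theorem \ref{55} applies. Since $(0,\tfrac{1}{17})$ contains no wall by Proposition \ref{5}, the wall-crossing theorem yields an isomorphism of Artin stacks $\mtc{M}^K(c)\simeq \mtc{M}^K(\varepsilon)$; this isomorphism is induced by the natural open immersions into $\mtc{M}^K(c_1)$, and so is canonical. Composing with the isomorphism $\mtc{M}^K(\varepsilon)\simeq \mtc{M}^{\GIT}$ supplied by Theorem \ref{55} produces the desired $\mtc{M}^K(c)\simeq \mtc{M}^{\GIT}$. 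Passing to good moduli spaces, we obtain $\ove{M}^K(c)\simeq \ove{M}^{\GIT}$.

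The only substantive content is the identification of $c_1=\tfrac{1}{17}$ as the first wall, which is deferred to Proposition \ref{5}; everything else is a formal consequence of the universal property of the K-moduli stack together with the wall-crossing theorem. Consequently, this corollary is properly viewed as a bookkeeping statement that packages Theorem \ref{55} with the forthcoming wall analysis, and the proof essentially reduces to citing those two results.
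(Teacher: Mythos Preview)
Your proposal is correct and follows exactly the approach the paper indicates: the paper does not give a separate proof of this corollary, but simply remarks (in the sentence preceding it) that once Proposition~\ref{5} establishes that the first wall is $c_1=\tfrac{1}{17}$, the corollary follows from Theorem~\ref{55} together with the wall-crossing theorem. Your write-up is just a more explicit unpacking of that one-line justification.
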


\subsection{The first wall $c_1=\frac{1}{17}$: curves with four $g^2_6$}\*

Let $\Sigma_5$ be the smooth del Pezzo surface obtained by blowing up $\mb{P}^2$ along $4$ general points $p_1,...,p_4$ with exceptional divisors $E_1,...,E_4$ respectively. Let $L_1$ and $L_2$ be the proper transforms of the lines connecting $p_1,p_2$ and $p_3,p_4$ respectively. Then $D_{\frac{1}{17}}:=4L_1+2L_2+2E_1+2E_2$ is a divisor in $|-2K_{\Sigma_5}|$.

\begin{theorem}\label{1}
The log Fano pair $(\Sigma_5,cD_1)$ is K-polystable if and only if $0<c<\frac{1}{17}$. 
\end{theorem}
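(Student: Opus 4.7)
The plan is to apply the valuative criterion (Theorem~\ref{32}). Since $D_1\sim -2K_{\Sigma_5}$, numerical scaling gives $S_{(\Sigma_5,cD_1)}(E)=(1-2c)S_{\Sigma_5}(E)$ and $A_{(\Sigma_5,cD_1)}(E)=A_{\Sigma_5}(E)-c\ord_E(D_1)$, leading to the identity
\[
\beta_{(\Sigma_5,cD_1)}(E)=\beta_{\Sigma_5}(E)+c\bigl(2S_{\Sigma_5}(E)-\ord_E(D_1)\bigr)
\]
for every prime divisor $E$ over $\Sigma_5$. The automorphism group $\Aut(\Sigma_5,D_1)\cong(\mathbb{Z}/2)^2$ is finite (as $L_1$ is the only coefficient-$4$ component, any automorphism preserves $\{p_1,p_2\}$ and $\{p_3,p_4\}$), so K-polystability coincides with K-stability here.

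First I would compute $S_{\Sigma_5}(L_1)$ by Zariski decomposition of $-K_{\Sigma_5}-tL_1$. On $\Sigma_5$, the three $(-1)$-curves meeting $L_1$ are exactly $E_1,E_2,L_2$, and they are pairwise disjoint; so the nef chamber is $t\in[0,1]$, for $t\in[1,2]$ the negative part is $(t-1)(E_1+E_2+L_2)$ and the positive part squares to $2(2-t)^2$, vanishing at the pseudo-effective threshold $\tau=2$. Integrating the volume yields $S_{\Sigma_5}(L_1)=13/15$, so $\beta_{\Sigma_5}(L_1)=2/15$ and the identity above gives
\[
\beta_{(\Sigma_5,cD_1)}(L_1)=\frac{2-34c}{15}.
\]
This is negative for $c>1/17$ (hence K-instability) and zero at $c=1/17$; since $\Aut^0$ is trivial, $L_1$ produces a non-product test configuration, so K-polystability also fails at $c=1/17$.

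To show $\beta_{(\Sigma_5,cD_1)}(E)>0$ for every $E\neq L_1$ when $c\le 1/17$, I would combine two uniform inputs. The first is the value $\delta(\Sigma_5)=15/13$, which follows from the $(-1)$-curve computation above together with the classical fact that $(-1)$-curves compute $\delta$ on del Pezzo surfaces; it gives $15\,S_{\Sigma_5}(E)\le 13\,A_{\Sigma_5}(E)$. The second is that the pair
\[
\bigl(\Sigma_5,\tfrac{1}{4}D_1\bigr)=\bigl(\Sigma_5,\,L_1+\tfrac{1}{2}L_2+\tfrac{1}{2}E_1+\tfrac{1}{2}E_2\bigr)
\]
is simple normal crossing with all coefficients at most $1$ and $L_1$ the unique coefficient-$1$ component (the curves $L_2,E_1,E_2$ are pairwise disjoint, and the only intersections in the support are the transverse points $L_1\cap E_1$ and $L_1\cap E_2$), so it is log canonical with $L_1$ as its unique log-canonical place; equivalently, $\ord_E(D_1)\le 4\,A_{\Sigma_5}(E)$ for every $E$, with equality iff $E=L_1$. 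Concatenating,
\[
17\,A_{\Sigma_5}(E)-15\,S_{\Sigma_5}(E)\;\ge\;4\,A_{\Sigma_5}(E)\;\ge\;\ord_E(D_1),
\]
with the second inequality strict for $E\neq L_1$. Translating back via the identity of the first paragraph yields $\beta_{(\Sigma_5,cD_1)}(E)>0$ for all such $E$ and $c\le 1/17$, which together with the calculation of $\beta(L_1)$ establishes K-polystability precisely on $0<c<1/17$. The main technical obstacle is the Zariski-decomposition bookkeeping for $L_1$ (identifying the full negative part $E_1+E_2+L_2$ and the threshold $\tau=2$); the rest is a clean packaging of the two inequalities above.
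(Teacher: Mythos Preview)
Your argument is correct and takes a genuinely different route from the paper. Both approaches share the destabilizing computation $\beta_{(\Sigma_5,cD_1)}(L_1)=(2-34c)/15$, but diverge on the stabilizing direction. The paper constructs an explicit special degeneration of $(\Sigma_5,cD_1)$ to a complexity-one pair $(X_n,cD_n)$ on the nodal quintic del Pezzo, proves the latter is K-polystable at $c=1/17$ via the $\mathbb{T}$-variety criterion, and then invokes openness plus interpolation with the K-stability of $\Sigma_5$. You instead verify the valuative criterion directly by packaging two uniform bounds: the exact value $\delta(\Sigma_5)=15/13$ (giving $15\,S_{\Sigma_5}(E)\le 13\,A_{\Sigma_5}(E)$) and the identification of $\mathrm{lct}(\Sigma_5;D_1)=1/4$ with $L_1$ as the unique lc place (giving $\ord_E(D_1)<4A_{\Sigma_5}(E)$ for $E\neq L_1$). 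Concatenating these yields $17A-15S>\ord_E(D_1)$ for $E\neq L_1$, hence $\beta>0$ at $c=1/17$ and, by linearity in $c$ together with $\beta_{\Sigma_5}(E)>0$, on the whole interval $(0,1/17]$.

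Your route is cleaner and more self-contained for this single statement, but it imports the sharp value $\delta(\Sigma_5)=15/13$, which the paper does not compute (it only cites K-stability); you should supply a reference (e.g.\ Cheltsov's work or the Fano threefold book). The paper's longer route, by contrast, is not wasted effort: the degeneration to $(X_n,D_n)$ is exactly what is needed to identify the K-polystable replacement across the wall and to describe the exceptional divisor of $\phi^+_{1/17}$, so in the context of the paper that machinery has to be built anyway. Two minor corrections: in your SNC check you omitted the transverse point $L_1\cap L_2$ (the proper transforms of the two lines still meet once on $\Sigma_5$), though this does not affect the lc conclusion; and your assertion that the pair is plt with unique lc place $L_1$ should be stated explicitly, since ``unique lc place'' is what makes the inequality $\ord_E(D_1)\le 4A_{\Sigma_5}(E)$ strict for $E\neq L_1$.
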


\begin{lemma}\label{2}
If the pair $(\Sigma_5,cD_{\frac{1}{17}})$ is K-semistable, then $0<c\leq\frac{1}{17}$.
\end{lemma}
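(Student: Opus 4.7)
The plan is to exhibit a divisorial valuation $E$ over $\Sigma_5$ whose $\beta$-invariant becomes negative when $c>\tfrac{1}{17}$, and then invoke the valuative criterion (Theorem \ref{32}). The natural candidate is $E=L_1$, the irreducible component of $D_{\frac{1}{17}}$ appearing with the largest coefficient. Since $L_1$ is a smooth $(-1)$-curve entering $D_{\frac{1}{17}}$ with multiplicity $4$, the log discrepancy is immediate:
\begin{equation*}
A_{(\Sigma_5,cD_{\frac{1}{17}})}(L_1)=1-4c.
\end{equation*}

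For the $S$-invariant, note that $-K_{\Sigma_5}-cD_{\frac{1}{17}}=-(1-2c)K_{\Sigma_5}$, so writing $u=1-2c$ one needs the volume function $t\mapsto \mathrm{vol}(L(t))$ with $L(t)=-uK_{\Sigma_5}-tL_1$. Working in the basis $(H,E_1,E_2,E_3,E_4)$ with $L_1=H-E_1-E_2$ and $L_2=H-E_3-E_4$, and using the standard intersection theory on $\Sigma_5$, I would first identify the nef threshold by testing $L(t)$ against the ten $(-1)$-curves. The key observation is that $L(t)$ is nef on $[0,u]$ and becomes non-nef at $t=u$ because the three pairwise disjoint $(-1)$-curves $E_1,E_2,L_2$ simultaneously acquire negative intersection. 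On the nef range, $L(t)^2=5u^2-2tu-t^2$.

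The central technical step is the Zariski decomposition $L(t)=P(t)+N(t)$ for $t>u$. Because $E_1,E_2,L_2$ are pairwise orthogonal and each satisfies $C^2=-1$, the linear system determining $N(t)$ decouples and yields $N(t)=(t-u)(E_1+E_2+L_2)$. Computing $P(t)^2=L(t)\cdot P(t)$ gives the clean formula
\begin{equation*}
P(t)^2=5u^2-2tu-t^2+3(t-u)^2=2(2u-t)^2,
\end{equation*}
and the pseudo-effective threshold is $t=2u$. I will then verify that this decomposition is valid on the whole interval $[u,2u]$ by checking $P(t)\cdot C\ge 0$ against each of the remaining $(-1)$-curves $L_1,E_3,E_4,L_{13},L_{14},L_{23},L_{24}$; in every case the intersection equals $2u-t$ or $4u-2t$, both non-negative on $[u,2u]$. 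Integrating,
\begin{equation*}
\int_0^u(5u^2-2tu-t^2)\,dt+\int_u^{2u}2(2u-t)^2\,dt=\tfrac{11u^3}{3}+\tfrac{2u^3}{3}=\tfrac{13u^3}{3},
\end{equation*}
so $S_{(\Sigma_5,cD_{\frac{1}{17}})}(L_1)=\dfrac{13u^3/3}{5u^2}=\dfrac{13(1-2c)}{15}$.

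Combining the two pieces yields
\begin{equation*}
\beta_{(\Sigma_5,cD_{\frac{1}{17}})}(L_1)=(1-4c)-\frac{13(1-2c)}{15}=\frac{2-34c}{15},
\end{equation*}
which is strictly negative exactly when $c>\tfrac{1}{17}$. By Theorem \ref{32}, this contradicts K-semistability, giving the desired bound $c\le\tfrac{1}{17}$. The main obstacle is not conceptual but combinatorial bookkeeping: one has to correctly carry out the Zariski decomposition and rule out further walls of the volume function by exhaustively checking intersection numbers against all ten $(-1)$-curves on $\Sigma_5$. The favorable feature that makes this manageable is the orthogonality of the triple $\{E_1,E_2,L_2\}$, which allows the decomposition coefficients to be read off simultaneously rather than one wall at a time.
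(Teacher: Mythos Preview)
Your proposal is correct and follows essentially the same approach as the paper: both compute $\beta_{(\Sigma_5,cD_{1/17})}(L_1)$ via the Zariski decomposition of $-K_{\Sigma_5}-tL_1$, obtaining $A=1-4c$, $S=\tfrac{13(1-2c)}{15}$, and hence $\beta=\tfrac{2-34c}{15}$. The only cosmetic difference is that the paper factors out $(1-2c)$ before integrating while you carry the scaling $u=1-2c$ through the volume computation; you also supply the verification of the Zariski decomposition against the remaining $(-1)$-curves, which the paper leaves implicit.
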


\begin{proof}
Let us compute the $\beta$-invariant of $(\Sigma_5,cD_{\frac{1}{17}})$ with respect to the divisor $L_1$. First we have that $A_{(\Sigma_5,cD_{\frac{1}{17}})}(L_1)=1-4c$ by definition. To compute $S$-invariant, we need to do Zariski decomposition for the divisor $-K_{\Sigma_5}-tL_1$. Write $-K_{\Sigma_5}-tL_1=P(t)+N(t)$, where $P(t)$ and $N(t)$ are the positive and negative parts respectively in the Zariski decomposition (cf. \cite[Section 2.3.E]{Laz17}). Then we have that 

\begin{equation}
    P(t)= \begin{cases}
-K_{\Sigma_5}-tL_1,\quad &0\leq t\leq 1 \\
-K_{\Sigma_5}-tL_1-(t-1)(L_2+E_1+E_2),\quad & 1\leq t\leq 2,
\end{cases}
\end{equation}

and that

\begin{equation}
    \vol(-K_{\Sigma_5}-tL_1)=P(t)^2= \begin{cases}
5-2t-t^2,\quad &0\leq t\leq 1 \\
2(2-t)^2,\quad & 1\leq t\leq 2,
\end{cases}
\end{equation}

It follows that
\begin{equation}
\begin{split}
    S_{(\Sigma_5,cD_{\frac{1}{17}})}(L_1)&=\frac{1-2c}{5}\int_0^{2}\vol(-K_{\Sigma_5}-tL_1)dt\\
    &=\frac{1-2c}{5}\left(\int_0^{1}5-2t-t^2dt+\int_1^{2}2(2-t)^2dt\right)=\frac{13(1-2c)}{15}.
\end{split}
\end{equation} Thus $\beta_{(\Sigma_5,cD_{\frac{1}{17}})}(L_1)\geq0$ imposes the condition $c\leq 1/17$.
\end{proof}

Let $X_n$ be a degree $5$ nodal del Pezzo surface with one $A_1$-singularity. Here we use the subscript $n$ to denote \emph{nodal}. In fact, the surface $X_n$ is exactly the $X_1$ in the classification of ADE quintic del Pezzo in Section \ref{102}. We can obtain $X_n$ by blowing up $\mb{P}^2$ at four points $q_1,...,q_4$, where $q_3$ and $q_4$ are infinitely near, and then contracting the $(-2)$-curve. Let $D_n=4l_1+2l_2+2F_1+2F_2$ be a divisor in $|-2K_{X_n}|$, where the $F_i$'s are the exceptional divisors over $q_i$, and $l_1$ and $l_2$ are the proper transforms of the lines $\ove{q_1q_2}$ and $\ove{q_3q_4}$ respectively.  

\begin{lemma}\label{104}
The log Fano pair $(X_n,cD_n)$ is K-semistable if and only if $c=\frac{1}{17}$. Moreover, the pair $(X_n,\frac{1}{17}D_n)$ is K-polystable.
\end{lemma}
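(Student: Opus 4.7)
The plan is to pinch $c$ from both sides to force $c=1/17$ as the only K-semistable value, and then verify K-polystability at the wall via the complexity-one criterion (Theorem \ref{71}). The surface $X_n$ carries a natural $\mathbb{G}_m$-action inherited from the 1-PS $\lambda_t\colon[x:y:z]\mapsto[tx:ty:z]$ on $\mathbb{P}^2$, which fixes $q_1,q_2,q_3$, pointwise fixes $\overline{q_1q_2}$, and preserves the $q_4$-tangent direction on the exceptional over $q_3$; this makes $(X_n,D_n)$ a complexity-one $\mb{T}$-pair in the sense of Theorem \ref{71}.

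For the first squeeze $c\leq 1/17$, I would compute $\beta_{(X_n,cD_n)}(l_1)$ where $l_1$ is the proper transform of $\overline{q_1q_2}$. On the minimal resolution $\widetilde X_n\to X_n$ (with $e_3$ the $(-2)$-curve over $q_3$), one has $l_1\cdot e_3=0$, so $l_1$ avoids the $A_1$ singular point; the log discrepancy and the Zariski decomposition of $-K_{X_n}-tl_1$ are therefore identical to those computed on $\Sigma_5$ in Lemma \ref{2}, yielding
$$\beta_{(X_n,cD_n)}(l_1)=\frac{2(1-17c)}{15}.$$
This forces $c\leq 1/17$ for K-semistability.

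For the reverse inequality $c\geq 1/17$, I would compute the Futaki invariant $\Fut_{(X_n,cD_n)}(v_\lambda)$ of the valuation induced by $\lambda$ via the standard weight formula. It is linear in $c$ and must vanish at $c=1/17$, consistent with the observation that the non-trivial test configuration degenerating $(\Sigma_5,cD_{1/17})$ to $(X_n,cD_n)$ has Donaldson--Futaki invariant proportional to $1-17c$. Since $\lambda^{-1}$ has Futaki of opposite sign, for any $c\neq 1/17$ one of $\lambda,\lambda^{-1}$ destabilizes, and combined with the first step K-semistability forces $c=1/17$.

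Finally, for K-polystability at $c=1/17$ I would apply Theorem \ref{71}. The unique pointwise-fixed (horizontal) divisor is $l_1$, so condition (2) reads $\beta(l_1)=0$, which is exactly the computation at $c=1/17$ above; condition (3) is the Futaki vanishing from the preceding step. Condition (1) demands $\beta(F)>0$ for every vertical $\mathbb{G}_m$-equivariant prime divisor $F$ on $X_n$. Since the only irreducible $\mathbb{G}_m$-invariant curves on $\mathbb{P}^2$ are $\{z=0\}$ and the lines through $q_3$, the vertical equivariant prime divisors on $X_n$ are (up to the $q_1\leftrightarrow q_2$ symmetry of the configuration) $l_2$, a generic line through $q_3$, $\overline{q_1q_3}$, $F_1$, and $F_4$, each of which requires a separate Zariski-decomposition computation on $\widetilde X_n$. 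The main obstacle is this exhaustive case-work: for the divisors meeting the $A_1$ point (namely $\overline{q_1q_3}$ and $F_4$) one must pull back to $\widetilde X_n$ and carefully track the coefficient of $e_3$ appearing in the negative part of the decomposition. This is, however, precisely the computational template the paper announces will be reused for the subsequent walls.
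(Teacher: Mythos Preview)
Your proposal is correct and follows essentially the same complexity-one strategy as the paper: identify the $\mathbb{G}_m$-action scaling between $q_3$ and the line $\overline{q_1q_2}$, note that $l_1$ is the unique horizontal divisor, compute $\beta(l_1)=\frac{2(1-17c)}{15}$, and then verify the vertical $\beta$-invariants and Futaki vanishing in Theorem~\ref{71}.

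The one substantive difference is how you pin down $c\geq 1/17$. The paper does this by computing $\beta_{(X_n,cD_n)}(E)=\frac{2(17c-1)}{15}$ for the exceptional divisor $E$ over the $A_1$-singularity and invoking the valuative criterion directly; you instead argue via the sign of $\Fut(\lambda^{-1})$. These are equivalent, since the valuation attached to $\lambda^{-1}$ is (up to scaling) $\ord_E$---the other pointwise-fixed locus on the resolution---and the paper itself identifies $\Fut_{(X_n,cD_n)}(\lambda)$ with $\beta_{(X_n,cD_n)}(l_1)$ via \cite[Lemmas~3.3--3.4]{Xu21}. Your inference of the Futaki value from the Donaldson--Futaki invariant of the $(\Sigma_5,cD_{1/17})\rightsquigarrow(X_n,cD_n)$ test configuration is legitimate, since the DF invariant of a special test configuration equals the Futaki character of the induced $\mathbb{G}_m$-action on the central fiber; this is precisely the observation the paper records. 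One small inaccuracy in your vertical list: a generic line through $q_3$ also meets the contracted $(-2)$-curve $e_3$, so it too passes through the $A_1$-point and belongs with $\overline{q_1q_3}$ and $F_4$ among the cases requiring pullback to $\widetilde X_n$.
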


\begin{proof}
The pair $(X_n,cD_n)$ is a complexity one $\mb{T}$-variety with the $\mb{G}_m$-action $\lambda$, which induced from the $\mb{G}_m$-actions on $\mb{P}^2$ given by scaling between $q_3$ and the line $l_1$. Then the only horizontal divisor on $X_n$ is $l_1$ and all vertical divisors on $X_q$ are $l_2$, $F_1,F_2,F_4$ and the lines joining $q_3$ and $l_1$, which are of class $H$.

Now we apply Theorem \ref{71} to check that $(X_n,\frac{1}{17}D_n)$ is K-polystable if and only if $c=1/17$. It suffices to compute the $\beta$-invariants with respect to any $\mb{G}_m$-invariant divisors on $X_n$. It is easy to check that when $c=1/17$, the $\beta$-invariants of the vertical divisors are positive and $$\beta_{(X_n.cD_n)}(l_1)=\frac{2(1-17c)}{15}=0$$ by the same computation in the proof of Lemma \ref{2}. In fact, we also have $$\beta_{(X_n.cD_n)}(E)=1-\frac{17(1-2c)}{15}=\frac{2(17c-1)}{15},$$ which is non-negative if and only if $c\geq 1/17$, where $E$ is the exceptional divisor over the singularity. Observe that there is a special test configuration $(\mtf{X},c\mtf{D})\rightarrow \mb{A}^1$ of $(X_n,cD_n)$ induced by the $\mb{G}_m$-action $\lambda$ such that $$\Fut_{(X_n,cD_n)}(\lambda)=\Fut(\mtf{X},c\mtf{D};-K_{\mtf{X}/\mb{A}^1}-c\mtf{D}).$$ Then one can apply \cite[Lemma 3.3, 3.4]{Xu21} to conclude that the equality $\Fut_{(X_n,cD_n)}(\lambda)=0$ is equivalent to $\beta_{(X_n.cD_n)}(l_1)=0$.
\end{proof}

\begin{defn}
    We say that a log Fano pair $(X,D)$ \emph{specially degenerates} to another log Fano pair $(X_0,D_0)$ if there is a fibration $(\mtf{X},\mtf{D})\rightarrow \mb{A}^1$ such that $(\mtf{X}_t,\mtf{D}_t)\simeq (X,D)$ for any $t\neq 0$ and $(\mtf{X}_0,\mtf{D}_0)\simeq (X,D)$.
\end{defn}

\begin{lemma}
The log Fano pair $(\Sigma_5,cD_1)$ specially degenerates to $(X_n,cD_n)$.
\end{lemma}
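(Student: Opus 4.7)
The strategy is to build an explicit $\mb{Q}$-Gorenstein family $(\mtf{X},c\mtf{D})\to\mb{A}^1$ whose fiber over every $t\ne 0$ is $(\Sigma_5,cD_1)$ and whose central fiber is $(X_n,cD_n)$, realising the degeneration via a one-parameter subgroup of $\PGL(3)$. The natural choice is the $\mb{G}_m$-action $\lambda\cdot[x:y:z]=[\lambda x:\lambda y:z]$ on $\mb{P}^2$, whose fixed locus is $L_1\cup\{p_3\}$ with $L_1=\{z=0\}$ and $p_3=[0:0:1]$. Choosing coordinates so that $p_1,p_2\in L_1$ and $p_4\notin L_1\cup\{p_3\}$ makes the line $L_2=\overline{p_3p_4}$ $\mb{G}_m$-invariant, and $\lambda\cdot p_4\to p_3$ as $\lambda\to 0$ along the tangent direction of $L_2$.

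The next step is to form $\mb{P}^2\times\mb{A}^1$ with the diagonal action $\lambda\cdot(x,t)=(\lambda\cdot x,\lambda t)$, and to blow up the three constant (hence $\mb{G}_m$-fixed) sections $\sigma_i=\{p_i\}\times\mb{A}^1$ for $i=1,2,3$, followed by the strict transform of the orbit closure $\Gamma=\ove{\{(\lambda\cdot p_4,\lambda):\lambda\in\mb{G}_m\}}$, which passes through $(p_4,1)$ and extends to $(p_3,0)$. The resulting smooth $\mb{G}_m$-equivariant threefold $\mtc{Y}\to\mb{A}^1$ has $\mtc{Y}_t\simeq\Sigma_5$ for $t\ne 0$ (since the four points $p_1,p_2,p_3,\lambda\cdot p_4$ are in general position), while $\mtc{Y}_0$ is obtained from $\mb{P}^2$ by blowing up $p_1,p_2,p_3$ and then the point on the exceptional divisor over $p_3$ determined by the tangent direction of $\Gamma$ at $(p_3,0)$ — which is precisely the direction of $L_2$. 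This central fiber is the minimal resolution $\wt{X}_n$ of $X_n$, containing a single $(-2)$-curve $E_3'$.

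Taking the relative anticanonical model $\mtf{X}:=\Proj_{\mb{A}^1}\bigoplus_{m\ge 0}\pi_*\mtc{O}_{\mtc{Y}}(-mK_{\mtc{Y}/\mb{A}^1})$ then does the job. Since $-K_{\mtc{Y}/\mb{A}^1}$ is ample on every fiber over $t\ne 0$ and nef-big on $\mtc{Y}_0=\wt{X}_n$ with $E_3'$ the unique curve of degree zero, the natural morphism $\mtc{Y}\to\mtf{X}$ is an isomorphism over $\mb{A}^1\setminus\{0\}$ and contracts only $E_3'$ on the central fiber, producing $\mtf{X}_0=X_n$. One then defines $\mtf{D}$ as the pushforward of the $\mb{G}_m$-equivariant divisor $4\wt{\mtc{L}}_1+2\wt{\mtc{L}}_2+2\mtc{E}_1+2\mtc{E}_2$ on $\mtc{Y}$, where $\wt{\mtc{L}}_i$ is the strict transform of $L_i\times\mb{A}^1$ and $\mtc{E}_i$ is the exceptional divisor over $\sigma_i$. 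A direct intersection-number computation (using that neither $l_1$ nor the strict transform of $l_2$ meets $E_3'$ in $\wt{X}_n$, and that $\mtc{E}_1,\mtc{E}_2$ lie over disjoint sections) shows the support of $\mtf{D}$ is disjoint from $E_3'$, so $\mtf{D}$ descends cleanly to $\mtf{X}$ and restricts to $D_1$ on every general fiber and to $D_n$ on the central fiber.

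The main technical point to verify is that $\mtf{X}\to\mb{A}^1$ is genuinely $\mb{Q}$-Gorenstein at the threefold singularity appearing at $t=0$; this will follow because $\mtf{X}$ is constructed as the relative anticanonical model of a smooth threefold, so $K_{\mtf{X}/\mb{A}^1}$ is $\mb{Q}$-Cartier by construction, and the small contraction of the single $(-2)$-curve $E_3'$ in a smooth ambient threefold produces a standard $cA_1$ point that fibres over $\mb{A}^1$ as the classical $\mb{Q}$-Gorenstein smoothing of the surface $A_1$-singularity of $X_n$ to the smooth $\Sigma_5$.
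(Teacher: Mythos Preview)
Your plan is correct and would go through, but it takes a genuinely different route from the paper. The paper starts from the trivial family $\Sigma_5\times\mb{A}^1$, blows up the curve $L_1\times\{0\}$ in the central fiber to obtain a reducible central fiber $\hat{X}_1\cup\hat{X}_2$ with $\hat{X}_1\simeq\Sigma_5$ and $\hat{X}_2\simeq\mb{F}_1$, and then runs the $\hat{X}_1$-MMP: three flips replace $E_1,E_2,l_2\subset\hat{X}_1$ by three collinear blown-up points on $\hat{X}_2$, and a final divisorial contraction removes $\hat{X}_1$, leaving $X_n$ as the new central fiber. As the paper remarks, this is exactly the special test configuration associated to the divisorial valuation $\ord_{L_1}$, the valuation for which $\beta$ vanishes at $c=\tfrac{1}{17}$; so the paper's construction is tailored to the destabilising divisor. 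Your construction instead works downstairs on $\mb{P}^2\times\mb{A}^1$: you let $p_4$ collide with $p_3$ along $L_2$ via a one-parameter subgroup of $\PGL(3)$, blow up the four (partly moving) sections, and pass to the relative anticanonical model, which contracts the single $(-2)$-curve in the central fiber $\wt{X}_n$ via a small $(-1,-1)$-contraction to an ordinary double point of the threefold. This avoids running any MMP and makes the $\mb{G}_m$-equivariance completely explicit --- indeed, the action you use is precisely the $\mb{G}_m$-action $\lambda$ on $X_n$ invoked in Lemma~\ref{104} to verify K-polystability. Both approaches are valid; the paper's has the conceptual advantage of tying the degeneration to the computed $\beta$-invariant, while yours is more hands-on and sidesteps the flips.
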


\begin{proof}
Take a trivial fibration $\mtf{X}':=\Sigma \times \mb{A}^1\rightarrow \mb{A}^1$. Let $\mtf{X}\rightarrow \mtf{X}'$ be the blow-up of $\mtf{X}'$ along $l_1\times\{0\}$, where $l_1$ comes from the line in $\mb{P}^2$ passing through $p_1,p_2$. Then $\pi:\mtf{X}\rightarrow \mb{A}^1$ is also a fibration over $\mb{A}^1$, where the general fibers are all $\mb{P}^2$, and the central fiber is $\hat{X}_1\cup \hat{X}_2$. Here $\hat{X}_1\simeq\Sigma_5$ is the proper transform of $\Sigma_5\times\{0\}$, and $\hat{X}_2$ is isomorphic to the Hirzebruch surface $\mb{F}_1$. Now we run $\hat{X}_1$-MMP. One contracts three curves $E_1\times\{0\}$, $E_2\times\{0\}$ and $l_2$ on $\hat{X}_1$ and flips out three new rational curves on $\hat{X}_2$. Then one contracts the whole component $\hat{X}_1$. Keeping track of the component $\hat{X}_2$ in this procedure, we blow-up $\hat{X}_2$ along three distinct collinear points, and contract the induced $(-2)$-curve. Thus the central fiber is a quintic del Pezzo surface with an $A_1$-singularity. Tracking the boundary divisor part, one finds that the limit of $D_1$ is exactly $D_n$, and hence one constructs a special degeneration of $(\Sigma_5,D_1)$ to $(X_n,D_n)$.
\end{proof}

\begin{remark}
    \textup{Recall that a special test configuration is equivalent to a special valuation (cf. \cite[Remark 5.7]{Xu21}). The degeneration constructed in the proof is in fact the special test configuration induced by the divisorial valuation $\ord_{l_1}$ over $\Sigma_5$. }
\end{remark}

\emph{Proof of Theorem \ref{1}.} By openness of the K-semistability (cf. Theorem \ref{35}), it follows from the above two lemmas that $(\Sigma_5,cD_{\frac{1}{17}})$ is K-semistable for $c=1/17$. Thus the pair $(\Sigma_5,cD_{\frac{1}{17}})$ is K-stable for $c<1/17$ by interpolation (cf. Theorem \ref{36}). Combining with Lemma \ref{2}, one concludes that $(\Sigma_5,cD_{\frac{1}{17}})$ is K-semistable if and only if $0<c\leq\frac{1}{17}$.~\\

Now we show that $c=1/17$ is the first wall.

\begin{prop}\label{5}
Let $c\in(0,1/17)$ be a rational number and $(X,cD)$ be a K-semistable pair in $\ove{M}^K(c)$. Then $X\simeq \Sigma_5$.
\end{prop}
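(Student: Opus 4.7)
The plan is to argue by contradiction. Suppose there is a rational number $c \in (0, 1/17)$ and a K-polystable pair $(X, cD) \in \ove{M}^K(c)$ with $X \not\simeq \Sigma_5$; since K-polystability is preserved under degeneration inside a fixed K-moduli chamber, assuming K-polystability loses no generality. The goal is to exhibit a prime divisor $F$ over $X$ with $\beta_{(X,cD)}(F) < 0$, contradicting Theorem \ref{32}.

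First, I will invoke the classification of admissible surfaces. Because $(X, cD)$ admits a $\mb{Q}$-Gorenstein smoothing to $(\Sigma_5, cC)$, every singular point of $X$ is either an ADE singularity or a cyclic quotient $\frac{1}{dn^2}(1, dna-1)$ (Theorem \ref{37}). Proposition \ref{31} restricts the Gorenstein index to be at most $3$, and index $3$ only at an isolated $\frac{1}{9}(1,2)$-point; the forthcoming Proposition \ref{33} excludes such surfaces entirely from $\ove{M}^K(c)$ for any $0<c<1/2$, so only index $\leq 2$ surfaces remain. Combining this with Section \ref{102} and Theorem \ref{40}, the surface $X$ falls into a short explicit list: either an ADE quintic del Pezzo $X_1, X_{1,1}, X_2, X_{1,2}, X_3, X_4$, or one of the six index-$2$ surfaces from Theorem \ref{40}.

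Next, I will run through the list. The key observation is that each such $X$ admits a partial resolution $\widetilde{X} \to X$ which is a blow-up of $\mb{P}^2$ (or $\mb{F}_0, \mb{F}_2$) at four points in special position, and on $\widetilde{X}$ there is a distinguished $(-1)$- or $0$-curve $L$ — the proper transform of the line (resp. ruling) through the two most-special points — whose push-forward to $X$ plays the same role as $L_1$ did in Lemma \ref{2}. For the nodal case $X = X_n = X_1$, Lemma \ref{104} already gives K-polystability only at $c = 1/17$; for other $D \in |-2K_{X_n}|$ not of the form $D_n$ up to $\Aut(X_n)$, one uses the $\mb{G}_m$-equivariant criterion (Theorem \ref{71}) to identify the destabilizing equivariant divisor, with a Zariski decomposition identical to the one in Lemma \ref{2}, giving $\beta_{(X_n,cD)}(l_1) \leq \frac{2(1-17c)}{15} < 0$. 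For the remaining ADE surfaces $X_{1,1}, X_2, X_{1,2}, X_3, X_4$, the same divisor $L_1$ destabilizes because the corresponding Zariski decomposition only gets ``more negative'' as further $(-2)$-curves are contracted. The index-$2$ surfaces of Theorem \ref{40} lie farther from $\Sigma_5$ and require a similar but separate computation: the proper transform of a ruling (for the $\mb{F}_2$-based surfaces) or of the special line through the five collinear points (for the $\mb{P}^2$-based surface) serves as the destabilizing valuation, and its $\beta$-invariant is easily checked to be negative for all $c < 1/17$.

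The main obstacle is the uniformity and completeness of the case analysis: for each surface in the list, one must also describe the possible divisors $D$ (modulo $\Aut(X)$) that could conceivably give a K-semistable pair, and verify that in every case the same test divisor $L_1$ (or its analogue) witnesses instability. The index-$2$ cases require the most care, since the automorphism groups are positive-dimensional and one must apply the equivariant criterion of Theorem \ref{71} to reduce to finitely many invariant divisors before computing. Once these computations are done, the contradiction is immediate and the first wall is pinned at $c_1 = 1/17$.
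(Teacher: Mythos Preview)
Your proposal has a genuine error in the choice of destabilizing divisor. You claim that the line $l_1$ gives $\beta_{(X_n,cD)}(l_1) \leq \frac{2(1-17c)}{15} < 0$, but for $c < 1/17$ the quantity $\frac{2(1-17c)}{15}$ is \emph{positive}. In fact, for a general $D \in |-2K_{X_n}|$ not containing $l_1$ one has $A_{(X_n,cD)}(l_1) = 1$ and $S_{(X_n,cD)}(l_1) = \frac{13(1-2c)}{15}$, so $\beta(l_1) = \frac{2+26c}{15} > 0$. The divisor $l_1$ destabilizes $(\Sigma_5, cD_{\frac{1}{17}})$ (where it appears with multiplicity $4$) only for $c > 1/17$; you have the inequality reversed. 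Your appeal to Theorem \ref{71} is also misplaced: that criterion applies only when the pair $(X,D)$ itself has complexity one, i.e.\ when $D$ is $\mb{G}_m$-invariant, and says nothing about a general boundary divisor.

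The paper's proof is both correct and far shorter. The local volume inequality (Theorem \ref{34}) combined with $\widehat{\vol}(x,X) = 4/|G|$ for a quotient singularity $\mb{C}^2/G$ forces any singular point $x \in X$ to satisfy $|G| \leq \frac{9}{5(1-2c)^2} < 3$ for $c < 1/17$, so only $A_1$-singularities survive and $X \in \{X_1, X_{1,1}\}$. For these two surfaces the correct destabilizing divisor is the exceptional curve $E$ over an $A_1$-point: since $A_{(X,cD)}(E) \leq 1$ for every effective $D$ and $S_{(X,cD)}(E) = \frac{17(1-2c)}{15}$ is independent of $D$, one gets $\beta(E) \leq \frac{2(17c-1)}{15} < 0$ for $c < 1/17$. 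No case-by-case analysis of index-$2$ surfaces or forward reference to Proposition \ref{33} is needed.
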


\begin{proof}
Since the smooth quintic del Pezzo surface is unique, then it suffices to show that $X$ is smooth for $0<c<1/17$. 
Assume that $x\in X$ is a singularity. Then locally we have $(x\in X)\simeq(\mb{C}^2/G)$ for some nontrivial finite group $G$. As $(X,cD)$ is K-semistable, then \cite[Proposition 4.6]{LL19} implies that $$5(1-2c)^2=(-K_X-cD)^2\leq \left(\frac{3}{2}\right)^2\widehat{\vol}(x,X,D)\leq\frac{9}{4}\widehat{\vol}(x,X).$$ Applying \cite[Theorem 2.7]{LX19}, one deduces that $$|G|\leq 4\cdot\frac{9}{4}\cdot\frac{1}{5(1-2c)^2}\leq\frac{9\cdot 17^2}{5\cdot 15^2}<3,$$ and thus $G\simeq \mb{Z}/2\mb{Z}$ and $x$ is an $A_1$-singularity. By classification of ADE del Pezzo surfaces in Section \ref{102}, either $X=X_1$ or $X=X_{1,1}$. In the former case, we have already seen in proof of Lemma \ref{104} that $\beta_{(X_{1},cD)}(E)\geq 0$ implies that $c\geq 1/17$, where $E$ is the exceptional divisor obtained by blowing up the singularity. By the same computation for the surface $X_{1,1}$ in Section \ref{108}, we conclude that $X$ has to be smooth.

\end{proof}

\subsection{After the first wall}

\begin{lemma}\label{106}
        Let $c_1=\frac{1}{17}$, and $D\in|-2K_{X_n}|$ be an effective divisor. If the $A_1$-singularity $p\in X_n$ is contained in $D$, then $(X_n,c_1D)$ is K-unstable. If $p\notin D$, then $(X_n,c_1D)$ is K-semistable with the K-polystable degeneration $(X_n,c_1D_n)$. In particular, the only $c_1$-K-polystable curve on $X_n$ is $D_n$.
\end{lemma}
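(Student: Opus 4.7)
The claim has two parts, and I would tackle them with different techniques.

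\smallskip

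\emph{Part (1): destabilizing when $p \in D$.} I would use the exceptional curve $E$ of the minimal resolution $\mu\colon \tilde X_n \to X_n$ at $p$ as a destabilizing valuation. The proof of Lemma \ref{104} already yields $S_{X_n}(E) = 17/15$, whence $S_{(X_n,cD)}(E) = (1-2c)\cdot \tfrac{17}{15}$. The new ingredient is the log discrepancy $A_{(X_n,cD)}(E) = 1 - c\cdot \ord_E(\mu^{\ast} D)$. Because $X_n$ is Gorenstein and $D \sim -2K_{X_n}$ is Cartier, a local equation of $D$ at $p$ lies in $\mathfrak m_p$ whenever $p \in D$; pulling back through the quotient description $(p \in X_n) \simeq (0 \in \mb{A}^2/(\mb{Z}/2))$ then yields $\ord_E(\mu^{\ast} D) \ge 1$. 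Substituting $c = c_1 = 1/17$ gives
\[
\beta_{(X_n, c_1 D)}(E) \;\le\; 1 - c_1 - (1 - 2c_1)\cdot \tfrac{17}{15} \;=\; \tfrac{19 c_1 - 2}{15} \;=\; -\tfrac{1}{17} < 0,
\]
so $(X_n, c_1 D)$ is K-unstable by Theorem \ref{32}.

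\smallskip

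\emph{Part (2): K-semistability and degeneration when $p \notin D$.} I would exploit the $\mb{G}_m$-action $\lambda$ on $X_n$ from Lemma \ref{104} together with the induced action on $H^0(X_n, -2K_{X_n})$. Viewing a section as a sextic $F(X,Y,Z) = \sum a_{ijk} X^i Y^j Z^k$ on $\mb{P}^2$ obeying the multiplicity conditions at $q_1, q_2, q_3$ and at the infinitely near $q_4$, the $\lambda$-weight of the monomial $X^i Y^j Z^k$ is $i$. A careful count of the constraints produces a weight decomposition $H^0(X_n,-2K_{X_n}) = V_0 \oplus V_1 \oplus V_2 \oplus V_3 \oplus V_4$ with $\dim V_w = 3, 4, 5, 3, 1$; in particular the one-dimensional $V_4$ is spanned by $X^4(t_0 Y - Z)^2$, whose divisor on $X_n$ is exactly $D_n$ (here $t_0$ records the direction of $q_4$ on $F_3$). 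The crucial claim is that every $F \in V_w$ with $w \le 3$ vanishes along the exceptional $(-2)$-curve $E_3' \subset \tilde X_n$, while the generator of $V_4$ does not. I would verify this locally in the affine chart $(u', v')$ around $E_3' \cap F_4$ (with $y = u' v'$, $t = t_0 + v'$), where a weight-$w$ section of $-2K_{\tilde X_n}$ turns out to be divisible by $u'^{\,4-w}$. Equivalently, the evaluation functional $F \mapsto F(p)$ on $H^0(X_n,-2K_{X_n})$ is, up to scalar, the projection to $V_4$, so $\{D : p \notin D\} = \mb{P}(H^0) \setminus \mb{P}(V_0 \oplus V_1 \oplus V_2 \oplus V_3)$.

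\smallskip

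Granted this, for any $D$ with $p \notin D$ the defining section $F$ has nonzero $V_4$-component, and the projectivized $\lambda$-flow $t\cdot F = \sum_w t^{-w} F_w$ satisfies $\lim_{t \to 0} [t \cdot F] = [F_n] = D_n$. This realises a special test configuration of $(X_n, c_1 D)$ whose central fiber is the K-polystable pair $(X_n, c_1 D_n)$. Since $\lambda$ acts by automorphisms of $X_n$, every $t \ne 0$ fiber is isomorphic to $(X_n, c_1 D)$; by openness of K-semistability (Theorem \ref{35}) applied to this $\mb{A}^1$-family, K-semistability at the central fiber propagates to a Zariski neighborhood of $0$, and via the automorphisms above to every $t \ne 0$. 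Hence $(X_n, c_1 D)$ is K-semistable with K-polystable degeneration $(X_n, c_1 D_n)$, and the final ``in particular'' assertion follows from uniqueness of the K-polystable representative in each S-equivalence class.

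\smallskip

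The main technical obstacle is the weight-by-weight identification of the hyperplane $\{F(p) = 0\}$ with $V_0 \oplus V_1 \oplus V_2 \oplus V_3$. The multiplicity condition at the infinitely near point $q_4$ depends on $t_0$ and mixes monomials across weights in a nontrivial way, so the divisibility $u'^{\,4-w} \mid F$ on the resolution has to be verified after correctly accounting for the $-2e_3 - 2e_4$ normalization in two overlapping local charts around $E_3' \cap F_4$. Once this is in place, the rest of the argument is a clean application of openness of K-semistability and uniqueness of the K-polystable replacement.
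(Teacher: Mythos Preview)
Your proof is correct and follows exactly the paper's approach: destabilize via the exceptional divisor $E$ over the $A_1$-point when $p\in D$, and use the $\mb{G}_m$-action to degenerate to $(X_n,c_1D_n)$ when $p\notin D$, then invoke openness. Your weight analysis is a welcome elaboration of what the paper states in one line, but the ``main technical obstacle'' you flag is not really there: since the $\mb{G}_m$-action fixes $q_4$ (indeed $E_3$ is pointwise fixed), the infinitely-near multiplicity condition cuts out a linear subspace inside each weight piece separately, so nothing mixes across weights and the identification $\{F:p\in D\}=V_0\oplus V_1\oplus V_2\oplus V_3$ follows immediately from $m_3=6-\max\{w:F_w\neq 0\}$.
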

    
\begin{proof}
    If $p\in D$, then the $\beta$-invariant of the pair with respect to the exceptional divisor $E$ over $p$ is $$\beta_{(X_n,cD)}(E)\leq 1-c-\frac{17(1-2c)}{15}=\frac{19c-2}{15}<0$$ for $c=\frac{1}{17}+\varepsilon$. This proves the first statement. If $p\notin D$, then the $\mb{G}_m$-action on $X_n$ induces a special degeneration of $(X_n,c_1D)$ to $(X_n,c_1D_n)$, and hence $(X_n,c_1D)$ is K-semistable by openness (cf. Theorem \ref{35})
\end{proof}

\begin{theorem} Let $0<c<\frac{1}{2}$ and $0<\varepsilon\ll 1$ be rational numbers. 
    \begin{enumerate}
        \item We have an isomorphism $\phi^{-}_{\frac{1}{17}}:\ove{M}^K\left(\frac{1}{17}-\varepsilon\right)\stackrel{\sim}{\rightarrow}\ove{M}^K\left(\frac{1}{17}\right)$.
        \item If $D\in|-2K_{X_n}|$ does not contain the $A_1$-singularity, and $(X_n,\frac{1}{2}D)$ is klt (e.g. $D$ is at worst nodal), then $(X_n,cD)$ is K-stable for any $\frac{1}{17}<c<\frac{1}{2}$. 
        \item The wall-crossing morphism $$\phi^{+}_{\frac{1}{17}}:\ove{M}^K\left(1/17+\varepsilon\right)\stackrel{\sim}{\rightarrow}\ove{M}^K\left(1/17\right)$$ is a divisorial contraction. Moreover, the exceptional divisor $E^{+}_{\frac{1}{17}}$ of $\phi^{+}_{\frac{1}{17}}$ is isomorphic to a finite quotient of a weight projective space.
    \end{enumerate}
\end{theorem}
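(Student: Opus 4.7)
The plan is to prove the three parts in sequence, using the wall-crossing structure of the K-moduli together with the explicit degeneration $(\Sigma_5, c_1 D_1) \rightsquigarrow (X_n, c_1 D_n)$ constructed in Lemma \ref{104}.

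For part (1), the K-moduli theorem makes $\phi^{-}_{1/17}$ a projective birational morphism with normal target. By Proposition \ref{5}, every closed point of $\overline{M}^K(1/17 - \varepsilon)$ corresponds to a K-stable pair $(\Sigma_5, cD)$; such a pair remains K-polystable at $c_1$ unless it is the one specially degenerating via Lemma \ref{104}. The only new K-polystable point appearing at $c_1$ is $[(X_n, c_1 D_n)]$, and its S-equivalence class on the left side of the wall is represented by the unique $\mathrm{Aut}(\Sigma_5) = \mathfrak{S}_5$-orbit of $(\Sigma_5, c_1 D_1)$, which is a single point in the moduli. Hence $\phi^{-}_{1/17}$ is bijective on closed points, and Zariski's main theorem applied to a proper birational morphism onto a normal target yields the isomorphism.

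For part (2), I would invoke the interpolation principle. By Lemma \ref{106} combined with Theorem \ref{35}, the pair $(X_n, c_1 D)$ is K-semistable whenever $p \notin D$. The hypothesis that $(X_n, \tfrac{1}{2} D)$ is klt, together with $K_{X_n} + \tfrac{1}{2} D \sim_{\mathbb{Q}} 0$, makes $(X_n, \tfrac{1}{2} D)$ a klt log Calabi--Yau pair. Because $D \sim -2K_{X_n}$, the $\beta$-invariant $\beta_{(X_n, cD)}(E)$ of any prime divisor $E$ over $X_n$ is affine-linear in $c$; it is non-negative at $c = c_1$ (K-semistability) and strictly positive at $c = \tfrac{1}{2}$ (the klt condition forces $\beta_{(X_n, \tfrac{1}{2} D)}(E) = A_{(X_n, \tfrac{1}{2} D)}(E) > 0$, since $S$ vanishes in the log Calabi--Yau limit). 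Linearity then gives $\beta_{(X_n, cD)}(E) > 0$ for $c \in (c_1, 1/2)$ and all $E$, i.e., K-stability by the valuative criterion (Theorem \ref{32}).

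For part (3), the exceptional divisor $E^{+}_{1/17}$ is the preimage of $[(X_n, c_1 D_n)]$ under $\phi^{+}_{1/17}$, since that is the only K-polystable point introduced at the wall. By openness of K-semistability and Proposition \ref{31}, every K-polystable pair mapping to that point has the form $(X_n, cD)$, and part (2) certifies K-stability whenever $p \notin D$ and $(X_n, \tfrac{1}{2} D)$ is klt. To realize the quotient as a finite quotient of a weighted projective space, I would identify $\mathrm{Aut}^0(X_n) = \mathbb{G}_m$ via the $1$-PS $\lambda$ from Lemma \ref{104}, decompose $H^0(X_n, -2K_{X_n})$ into $\mathbb{G}_m$-weight spaces $\bigoplus_w V_w$ with $D_n$ spanning the extremal weight line $V_{w_0}$, and identify the basin $B \subset \mathbb{P}(H^0(-2K_{X_n}))$ of divisors whose $\lambda$-attractor is $D_n$. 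Then $(B \setminus \{D_n\})/\mathbb{G}_m$ is the weighted projective space $\mathbb{P}(w_1 - w_0, \ldots, w_r - w_0)$ of dimension $14$, a divisor inside the $15$-dimensional moduli; the residual action of the finite component group $\pi_0(\mathrm{Aut}(X_n))$ produces the claimed finite quotient.

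The main obstacle is matching the K-moduli wall-crossing with this GIT-type quotient. This requires a Luna slice (or local VGIT) argument at the closed orbit $[(X_n, c_1 D_n)]$, verifying that the CM $\mathbb{Q}$-line bundle induces precisely the $\mathbb{G}_m$-linearization whose semistable quotient is the weighted projective space above, rather than merely set-theoretic agreement. Once this local model is established, the explicit weight computation on $H^0(X_n, -2K_{X_n})$ becomes a concrete calculation, tractable via the $\mathbb{G}_m$-equivariant structure of $X_n$ as the central fiber of the test configuration from Lemma \ref{104}.
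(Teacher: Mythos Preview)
Your proposal is correct and follows essentially the same approach as the paper. For parts (1) and (2) the arguments coincide almost verbatim (single-point exceptional locus via Lemma \ref{106}, then interpolation); for part (3) the paper carries out your abstract weighted-projective-space description concretely by embedding $X_n\hookrightarrow\mathbb{P}^1\times\mathbb{P}^2$, normalizing the $x^2u^2$-coefficient to $1$, and reading off the $\mathbb{G}_m$-weight decomposition to obtain $E^+_{1/17}\simeq\mathbb{P}(1^2,2^5,3^5,4^3)/G$ with $G=\pi_0(\Aut(X_n))$, handling the Luna-slice issue you flag by building a universal family over this weighted projective stack and invoking the same universality argument as in Theorem \ref{55}.
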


\begin{proof}
   By Lemma \ref{106}, we know that if $(X_n,\frac{1}{4}D)$ is K-polystable, then $D=D_n$. Thus the exceptional locus of $\phi^{-}_{\frac{1}{17}}$ is a point, and hence an isomorphism. Moreover, if $p\notin D$ and $(X_n,\frac{1}{2}D)$ is klt, then $(X_n,cD)$ is K-stable for any $\frac{1}{17}<c<\frac{1}{2}$ by Lemma \ref{106} and interpolation. This proves (1) and (2).
 
   Notice that $X_n$ can be embedded in to $\mb{P}^1_{(u:v)}\times\mb{P}^2_{(x:y:z)}$ as a hypersurface defined by the equation $uyz+vx(y+z)=0$, and a curve of the class $-2K_{X_n}$ comes from $|\mtc{O}_{X_n}(2,2)|$. The $\mb{G}_m$-action $\lambda$ on $X_n$ is given by $$t((u:v),(x:y:z))=((tu:t^{-1}v),(t^2x:y:z)).$$ The singularity of $X_n$ is $p=((1:0),(1:0:0))$. The condition $p\in D$ comes down to saying that the the coefficient of $x^2u^2$ in any defining equation of $D$ is non-zero. Notice that we have an exact sequence of groups $$0\longrightarrow \Aut^0(X_n)=\mb{G}_m\longrightarrow \Aut(X_n)\longrightarrow G\longrightarrow0,$$ where $G$ is a finite group (cf. \cite[Section 8]{CP20}). Let $V$ be the affine subspace of $H^0(X_n,\mtc{O}_{X_n}(2,2))$ consisting of polynomials whose coefficient of $x^2u^2$ is $1$. Viewing $V$ as a vector space, the $\mb{G}_m$-action $\lambda$ on $X_n$ induces a $\mb{G}_m$-action on $V$. Taking the weight decomposition, we have that $$V=\bigoplus_{i=1}^4V_i,$$ where $\mb{G}_m$ acts on $V_i$ of weight $i$, and the dimensions of $V_i$'s are $2,5,5,3$ respectively. Then $[(V-\{0\})/\mb{G}_m]$ is a weighted projective stack, which admits a good moduli space $\mb{P}(1^2,2^5,3^5,4^3)$. There exists a universal family $$\pi:(\mtf{X},c\mtf{D})\rightarrow[(V-\{0\})/\mb{G}_m],$$ where the fiber of $f\in [(V-\{0\})/\mb{G}_m]$ is given by $(X_n,c(x^2u^2-f=0))$. Moreover, the finite group $G$ acts on $(\mtf{X},c\mtf{D})$ and $[(V-\{0\})/\mb{G}_m]$ such that $\pi$ is $G$-equivariant. By Lemma \ref{106}, each pair in $E^{+}_{\frac{1}{17}}$ is isomorphic to one of the fiber in the family $(\mtf{X},c\mtf{D})$. Thus by the same argument in the proof of Theorem \ref{55}, we prove that $$E^{+}_{\frac{1}{17}}\simeq \mb{P}(1^2,2^5,3^5,4^3)/G.$$
   
\end{proof}

\begin{remark}\label{61}\textup{
If $C$ is a non-special genus $6$ curve with four $g^2_6$, then $C$ is canonically embedded in $X_n$ of the class $-2K_{X_n}$ which does not contain the  $A_1$-singularity. Thus $[C]$ lies in the image of the rational map $E^{+}_{\frac{1}{14}}\dashrightarrow M_6$ defined by $[(X,cC)]\rightarrow [C]$.}
\end{remark}

\section{Walls corresponding to nodal del Pezzo surfaces}

The goal this section is to determine the walls given by quintic del Pezzo pairs where the surfaces have du Val singularities. 

Recall that we classify singular quintic del Pezzo surfaces with ADE singularities in Section \ref{102}. Among all of them, there are four $\mb{T}$-surfaces of complexity one, and two toric surfaces. Each complexity one surface corresponds to two walls. We will give detailed proof for the surface with an $A_1$-singularity in Section \ref{107}, and list all the other three cases. The toric case is more complicated, we will treat those two toric surfaces in Section \ref{108} and \ref{109}.

\subsection{ADE surfaces of complexity one}\label{107}

Let $\Sigma_5$ be the smooth del Pezzo surface obtained by blowing up $\mb{P}^2$ along $4$ general points $p_1,...,p_4$ with exceptional divisors $E_1,...,E_4$ respectively. Let $L_1$ and $L_2$ be the proper transforms of the lines connecting $p_1,p_2$ and $p_3,p_4$ respectively which intersect at $p$. Let $D_{\frac{2}{19}}:=3L_1+L_2+C+E_1+E_2\in |-2K_{\Sigma_5}|$ be a divisor, where $C$ is the proper transform of a general conic passing through $p_3,p_4$. Let $X_1$ be the quintic del Pezzo surface with one $A_1$-singularity. Keep the same notation as in the construction of $X_1$ above Lemma \ref{104}. Let $D'_{\frac{2}{19}}=3l_1+l_2+F_1+F_2+l_3+l_4$ be a divisor in $|-2K_{X_1}|$, where and $l_3$ and $l_4$ are the proper transforms of two general lines connecting the point $p_3$ and the line $\ove{p_1p_2}$.

\begin{prop}\label{110}
Let $0<c<\frac{1}{2}$ be a rational number, and $P\in X_1$ be the $A_1$-singularity.
    \begin{enumerate}
        \item The log Fano pair $(X_1,cD'_{\frac{2}{19}})$ is K-polystable if and only if $c=\frac{2}{19}$.
        \item The log Fano pair $(\Sigma_5,cD_{\frac{2}{19}})$ admits a special degeneration to $(X_1,cD'_{\frac{2}{19}})$.
        \item The pairs $(\Sigma_5,cD_{\frac{2}{19}})$ is K-semistable if and only if $0<c\leq\frac{2}{19}$.
        \item  Let $C\in|-2K_{X_1}|$ be a curve with at worst nodal singularity and $P\in C$ with $\mult_P(C)=1$. Then $(X_1,cC)$ is K-unstable for any $0<c<\frac{2}{19}$ and K-semistable for $c=\frac{2}{19}$. Moreover, if $(X_1,\frac{1}{2}C)$ is klt (e.g. $C$ has nodal singularities), then $(X_1,cC)$ is K-stable for any $\frac{2}{19}<c<\frac{1}{2}$.
        \item  Let $C\in|-2K_{X_1}|$ be a curve such that $\mult_P(C)\geq 2$. Then $(X_1,cC)$ is K-unstable for any $0<c<\frac{1}{2}$.
    \end{enumerate}
\end{prop}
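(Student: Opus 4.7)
The five claims parallel the first-wall analysis for $(\Sigma_5,D_{1/17})$ and $(X_n,D_n)$ in Section~3, so my plan is to recycle that template: verify K-polystability on the singular surface via the complexity-one criterion, produce an explicit special degeneration from $\Sigma_5$, read the wall off a single $\beta$-computation on the horizontal divisor, and then control the local behavior at the $A_1$-point with the exceptional $(-2)$-curve.

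For Part (1), I would apply Theorem \ref{71}. The same $\mathbb{G}_m$-action used in Lemma \ref{104} preserves every component of $D'_{2/19}$ (namely $l_1$, $l_2$, $F_1$, $F_2$, $l_3$, $l_4$), so $(X_1,cD'_{2/19})$ is a complexity-one $\mathbb{T}$-pair whose unique horizontal equivariant divisor is $l_1$ and whose vertical ones are $l_2$, $F_1$, $F_2$, $F_4$, and the invariant sections of class $H$. Imposing $\beta_{(X_1,cD'_{2/19})}(l_1)=0$ via the Zariski decomposition of $-K_{X_1}-tl_1$ from Lemma \ref{2} pins down $c=2/19$; checking $\beta>0$ on the vertical divisors is a routine Zariski-decomposition exercise, and Futaki vanishing then follows from horizontal $\beta$-vanishing via \cite[Lemma 3.3, 3.4]{Xu21}, exactly as in Lemma \ref{104}. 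For Part (2), I would blow up $L_1\times\{0\}$ inside $\Sigma_5\times\mathbb{A}^1$ and run MMP on the proper transform of $\Sigma_5\times\{0\}$, paralleling the construction at the end of Section~3: the central fiber becomes $X_1$ with the $A_1$-point at the image of $L_1\cap L_2$, and tracking the limits of the line and conic components of $D_{2/19}$ should produce exactly $D'_{2/19}$.

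Part (3) then follows from a direct $\beta$-computation on $\Sigma_5$: $A(L_1)=1-3c$ (since the coefficient of $L_1$ in $D_{2/19}$ is $3$) and $S(L_1)=\tfrac{13(1-2c)}{15}$ (identical Zariski decomposition to Lemma \ref{2}), so $\beta\ge 0$ forces $c\le 2/19$; K-semistability at the wall comes from (1), (2), and Theorem \ref{35}, while the lower range is covered by interpolation via Theorem \ref{36}. For Parts (4) and (5), the controlling invariant is $\beta_{(X_1,cC)}(E)$, where $E$ is the exceptional $(-2)$-curve of the minimal resolution $\pi\colon Y\to X_1$ over $P$: Du Val-ness gives $A_{X_1}(E)=1$, and Cartierness of $C\in|-2K_{X_1}|$ together with $E^2=-2$ and $\pi^*C\cdot E=0$ determines the coefficient of $E$ in $\pi^*C$ from $\mult_P(C)$. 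When $\mult_P(C)=1$, a computation parallel to Lemma \ref{106} yields $\beta_{(X_1,cC)}(E)=\tfrac{19c-2}{15}$, which is negative on $(0,2/19)$, vanishes at the wall, and is positive above it; K-semistability at $c=2/19$ then comes from flowing $C$ under the $\mathbb{G}_m$-action to $D'_{2/19}$ and invoking openness, while K-stability for $c>2/19$ follows by interpolation once $(X_1,\tfrac{1}{2}C)$ is klt. Part (5) is analogous: $\mult_P(C)\ge 2$ raises the coefficient of $E$ in $\pi^*C$ enough to force $\beta(E)<0$ throughout $(0,\tfrac{1}{2})$.

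The principal obstacle I anticipate is Part (2): one must verify that the MMP contracts precisely the right curves so that the limit of $D_{2/19}$ is $D'_{2/19}$ rather than some other effective representative, and in particular that the conic component degenerates correctly into $l_3+l_4$ while the lines track onto $l_1$ and $l_2$ with the right multiplicities. The $\beta$-computation at $E$ in Parts (4)--(5) is also delicate because one must carefully interpret $\pi^*C$ as a $\mathbb{Q}$-divisor on $Y$ when the individual components of $C$ fail to be Cartier at the $A_1$-point, although passing to the index-one cover (equivalently, the étale double cover resolving the $A_1$) should render the bookkeeping routine.
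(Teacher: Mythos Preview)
Your proposal is correct and follows essentially the same route as the paper. The paper's own proof is a single sentence for (1)--(4) (``we use the same argument as in the proof for the first wall $c_1=\frac{1}{17}$'') together with the explicit inequality $\beta_{(X_1,cC)}(F)\le 1-2c-\frac{17(1-2c)}{15}=-\frac{2(1-2c)}{15}$ for (5); you have correctly unpacked what that template entails, your $\beta$-value $\frac{19c-2}{15}$ for (4) matches the computation appearing in Lemma~\ref{106}, and the interpolation and openness steps are invoked exactly as in Section~3.
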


\begin{proof}
    For (1)-(4), we use the same argument as in the proof for the first wall $c_1=\frac{1}{17}$. For (5), it suffices to observe that the $\beta$-invariant of the pair $(X_1,cC)$ with respect to the exceptional divisor $F$ over $P$ is $$\beta_{(X_1,cC)}(F)\leq 1-2c-\frac{17(1-2c)}{15}=-\frac{2(1-2c)}{15}<0$$ for any $0<c<\frac{1}{2}$.
\end{proof}

\begin{remark}
    \textup{There are two immediate corollaries. 
    \begin{enumerate}
        \item There are no other walls corresponding to the surface $X_1$, in the sense that there does not exist a wall $c=c_i$ which is not equal to $\frac{1}{17}$ or $\frac{2}{19}$, such that there is some $C\in|-2K_1|$ satisfying that $(X_1,cC)$ is K-unstable for $0<c<c_i$ and $(X_1,c_iC)$ is K-semistable.
        \item The image of the rational map $\ove{M}^K(1/2-\varepsilon)\dashrightarrow M_6$ contains the locus of smooth genus six curves with exactly four $g^2_6$.
    \end{enumerate}
    We can compute the dimension of the exceptional locus of the wall-crossing morphism $\ove{M}^K(\frac{2}{19}+\varepsilon)\rightarrow\ove{M}^K(\frac{2}{19})$ given by the pairs with one $A_1$-singularity. The dimension of divisors $D$ in $|-2K_{X_1}|$ such that $\mult_P(D)=1$ is $14$, and $X_1$ has a $1$-dimensional automorphism group. Thus the dimension of the exceptional locus is $13$. In particular, this is not a divisor in the K-moduli space.
    }
\end{remark}

Doing the same computations for the other surfaces of complexity one, we obtain the following table of walls. Moreover, these are all the walls corresponding to these surfaces, and none of them contributes to divisorial contractions.

\begin{center}
\renewcommand*{\arraystretch}{1.2}
\begin{table}[ht]
    \centering
      \begin{tabular}{ |c  |c |c|c|}
    \hline
     wall & surface & surface singularities & restriction on curves $C$    \\ \hline 
     
     $\frac{1}{17} $  &  $X_1$  & $A_1$-singularity $P$  &   $P\notin C$
     \\ \hline
     
     $\frac{2}{19} $  &  $X_1$  & $A_1$-singularity $P$  &   $\mult_P(C)=1$
     \\ \hline     
     
     $\frac{1}{7} $  &  $X_2$  & $A_2$-singularity $P$  &   $P\notin C$
     \\ \hline     
     
     $\frac{2}{9} $  &  $X_2$  & $A_1$-singularity $P$  &   $\mult_P(C)=1$
     \\ \hline     
     
     $\frac{2}{9} $  &  $X_3$  & $A_3$-singularity $P$  &   $P\notin C$
     \\ \hline
     
     $\frac{2}{7} $  &  $X_4$  & $A_4$-singularity $P$  &   $P\notin C$
     \\ \hline
          
     $\frac{4}{13} $  &  $X_3$  & $A_3$-singularity $P$  &   $\mult_P(C)=1$
     \\ \hline
     
     $\frac{4}{11} $  &  $X_4$  & $A_3$-singularity $P$  &   $\mult_P(C)=1$
     \\ \hline   
   \end{tabular}
    \caption{Walls corresponding to ADE del Pezzo of complexity one}
    \label{Kwall1}
\end{table}
\end{center}

\subsection{The surface $X_{1,1}$}\label{108}

Let $\Sigma_5$ be the smooth del Pezzo surface obtained by blowing up $\mb{P}^2$ along $4$ general points $p_1,...,p_4$ with exceptional divisors $E_1,...,E_4$ respectively. Let $L_1$ and $L_2$ be the proper transforms of the lines connecting $p_1,p_2$ and $p_3,p_4$ respectively which intersect at $p$. Consider the divisor in $G_{\frac{2}{19}}:=2L_1+2L_2+L_3+L_4\in|-2K_{\Sigma_5}|$, where and $L_3,L_4$ are two lines passing through $p$ which are not both $L_1$ or $L_2$. 

Let $X_{1,1}$ be the quintic del Pezzo surface with two $A_1$-singularities, obtained by taking weighted blow-up of $\mb{P}^2_{x,y,z}$ at $(0:1:0)$ of weight $(2,1)$ for $(x,z)$ and at $(1:0:0)$ of weight $(2,1)$ for $(y,z)$. Denote the two singularities over $(0:1:0)$ and $(1:0:0)$ by $P$ and $Q$, respectively. Let $F_1$ and $F_2$ be the exceptional divisors of $\pi:X_{1,1}\rightarrow\mb{P}^2$ containing $P$ and $Q$ respectively. Let $l_1$, $l_2$, $l$ be the proper transforms of the lines $\{x=0\}$, $\{y=0\}$ and $\{z=0\}$, respectively. Let $q=(0:0:1)$ be the intersection point of $l_1$ and $l_2$, and $l_3,l_4$ be the pull-back of lines $\{a_3x+b_3y=0\}$ and $\{a_4x+b_4y=0\}$ . Then $G'_{\frac{2}{19}}:=2l_1+2l_2+l_3+l_4$ is a section in $|-2K_{X_{1,1}}|$.

\begin{prop}\label{112}
Let $0<c<\frac{1}{2}$ be a rational number.
    \begin{enumerate}
        \item If $l_3+l_4=\pi^{*}(xy=0)$, then $(X_{1,1},cG'_{\frac{2}{19}})$ is K-polystable for $c=\frac{2}{19}$, and K-unstable for any other $0<c<\frac{1}{2}$.
        \item If $l_3+l_4=\pi^{*}(x(x+ay)=0)$ or $l_3+l_4=\pi^{*}(y(y+ax)=0)$ for some $a\in\mb{C}^{*}$, then $(X_{1,1},cG'_{\frac{2}{19}})$ is strictly K-semistable for $c=\frac{2}{19}$, and K-unstable for any other $0<c<\frac{1}{2}$.
        \item If $l_3+l_4=\pi^{*}((x+ay)(x+by)=0)$ for some $a,b\in\mb{C}^{*}$, then $(X_{1,1},cG'_{\frac{2}{19}})$ is K-polystable for $c=\frac{2}{19}$, and K-unstable for any other $0<c<\frac{1}{2}$.
        \item If $l_3+l_4=\pi^{*}(x^2=0)$ or $l_3+l_4=\pi^{*}(y^2=0)$, then $(X_{1,1},cG'_{\frac{2}{19}})$ K-unstable for any $0<c<\frac{1}{2}$.
    \end{enumerate}
\end{prop}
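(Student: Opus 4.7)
The plan is to adapt the $\beta$-invariant strategy from Lemma \ref{2}, Lemma \ref{104}, and Proposition \ref{110} to the singular toric surface $X_{1,1}$, combined with the equivariant K-polystability criterion of Theorem \ref{71}. The surface $X_{1,1}$ carries a full $\mb{T}=\mb{G}_m^2$-action inherited from $\mb{P}^2$, with five torus-invariant prime divisors $l_1,l_2,l,F_1,F_2$; the boundary $G'_{2/19}$ preserves only a subtorus whose dimension depends on whether the lines $l_3,l_4$ pass through the singular points $P$ and $Q$, and this is exactly what distinguishes the four cases via the pullback formulas $\pi^{*}\{x=0\}=l_1+2F_1$ and $\pi^{*}\{y=0\}=l_2+2F_2$.

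The first step is to perform the Zariski decompositions of $-K_{X_{1,1}}-t D$ for the torus-invariant divisors $D=l_i,F_i$, exactly as in Lemma \ref{2}, to obtain the $S$-invariants $S_{X_{1,1}}(l_i)$ and $S_{X_{1,1}}(F_i)$. In each of the four cases the coefficients of $l_1,l_2,F_1,F_2$ in $G'_{2/19}$ are then explicit integers, and the $\beta$-invariants become explicit affine functions of $c$. In Case (4) the boundary coefficient along $l_1$ jumps to $4$, producing a strictly negative $\beta_{(X_{1,1},cG'_{2/19})}(l_1)$ for every $0<c<\frac{1}{2}$; this gives K-instability throughout. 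In Cases (1) and (2) the coefficient of $l_1$ is $3$, and the computation forces $\beta(l_1)=0$ at $c=\frac{2}{19}$. In Case (3) the coefficient of $l_1$ drops to $2$ and the wall is instead selected by $\beta(F_1)=\beta(F_2)=0$ at $c=\frac{2}{19}$; in any case, some single $\beta$-invariant vanishes at $c=\frac{2}{19}$ and changes sign, which destabilizes the pair for $c>\frac{2}{19}$.

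For $c<\frac{2}{19}$ in Cases (1)-(3), the one-parameter $\mb{G}_m$-action preserving (or extending the subtorus of symmetries of) the pair induces a special test configuration whose generalized Futaki invariant agrees with the $\beta$-invariant above up to a positive constant; it is negative for $c<\frac{2}{19}$ by the same sign analysis, destabilizing the pair from the other side. At $c=\frac{2}{19}$ apply Theorem \ref{71} case by case. In (1) the full torus $\mb{T}$ preserves the pair, which is therefore toric and polystable by the barycenter condition on the moment polytope, verified from the computed $S$-invariants. In (3) the stabilizer is a $1$-dimensional subtorus together with an involution swapping $l_3\leftrightarrow l_4$, so the pair is complexity-one and the three conditions of Theorem \ref{71} (vanishing $\beta$ on horizontal divisors, positivity on vertical ones, and vanishing Futaki invariant) all follow from the $\beta$-computations above. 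In (2) no such involution exists, and the $\mb{G}_m$-action moving $\{x+ay=0\}$ to $\{y=0\}$ produces a special degeneration of the Case-(2) pair to a Case-(1) pair; openness (Theorem \ref{35}) then gives strict K-semistability with the Case-(1) pair as its K-polystable representative.

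The main obstacle will be the intersection-theoretic bookkeeping on the singular toric surface $X_{1,1}$: carrying out the Zariski decompositions of $-K_{X_{1,1}}-tD$ involving all five torus-invariant divisors and the two $A_1$-singularities, tracking the weighted-blow-up pullback multiplicities $m_1=m_2=2$ correctly, and identifying -- particularly in Case (3) -- precisely which $\beta$-invariant selects the wall at $c=\frac{2}{19}$. Once these ingredients are in place, each conclusion of the proposition follows by the same mechanism already established for the first wall in Section 3 and for $X_1$ in Proposition \ref{110}.
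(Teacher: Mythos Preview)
Your overall strategy---compute the toric $\beta$-invariants on $X_{1,1}$ and feed them into the toric barycenter criterion (Case (1)), the complexity-one criterion of Theorem \ref{71} (Case (3)), openness after a special degeneration (Case (2)), and a destabilizing divisor (Case (4))---is exactly the paper's approach. However, there is a genuine miscalculation in Case (3) that would derail the argument.

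In Case (3) one has $G'_{2/19}=2l_1+2l_2+l_3+l_4$ with $l_3,l_4$ avoiding both blow-up centers, so the coefficient of $F_i$ in $G'_{2/19}$ is $0$. A direct Zariski decomposition gives $S_{X_{1,1}}(F_i)=1$, hence
\[
\beta_{(X_{1,1},cG'_{2/19})}(F_i)=1-(1-2c)\cdot 1=2c>0
\]
for every $0<c<\tfrac12$, not zero at $c=\tfrac{2}{19}$. The divisor that actually selects the wall is $l$, the proper transform of $\{z=0\}$: this is the unique \emph{horizontal} divisor for the $(1,1,0)$-action, and $\beta(l)=\tfrac{2(19c-2)}{15}$ (cf.\ Proposition \ref{111} and Table \ref{Kwall2}). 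If you proceed with $\beta(F_i)=0$, Theorem \ref{71} would in fact \emph{deny} K-polystability, since $F_i$ is vertical and strict positivity is required there; and for $c\neq\tfrac{2}{19}$ you would not find your destabilizer. Once you replace $F_i$ by $l$, your plan goes through: the Futaki invariant of the $(1,1,0)$-action is proportional to $\beta(l)$ and is nonzero for $c\neq\tfrac{2}{19}$, while at $c=\tfrac{2}{19}$ the remaining vertical checks are straightforward.

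A smaller issue in Case (4): with coefficient $4$ along $l_1$ one gets $\beta(l_1)=(2-34c)/15$, which is \emph{positive} for $c<\tfrac{1}{17}$, so $l_1$ alone does not destabilize on all of $(0,\tfrac12)$. Use instead $F_1$ (coefficient $4$, giving $\beta(F_1)=-2c<0$ for all $c>0$), or combine $l_1$ with $l$.
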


\begin{proof} We apply the criterion for $\mb{T}$-varieties.
    \begin{enumerate}
    \item For simplicity, denote the divisor $G'_{\frac{2}{19}}$ in this case by $\bf{G}'_{\frac{2}{19}}$. Notice that $(X_{1,1},c\bf{G}'_{\frac{2}{19}})$ is a toric pair. It follows from \cite[Theorem 1.24]{CA23} that $(X_{1,1},c\bf{G}'_{\frac{2}{19}})$  is K-polystable for $c=\frac{2}{19}$, and K-unstable for any other $0<c<\frac{1}{2}$.
    \item In this case, the pair $(X_{1,1},cG'_{\frac{2}{19}})$ is of complexity one, which admits a special degeneration to $(X_{1,1},c\bf{G}'_{\frac{2}{19}})$. Then by openness of K-semistability and (1), we conclude that $(X_{1,1},cG'_{\frac{2}{19}})$ is strictly K-semistable for $c=\frac{2}{19}$, and K-unstable for any other $0<c<\frac{1}{2}$.
    \item In this case, we can check K-polystablility using the criterion for complexity one pairs as in the proof of Lemma \ref{104}.
    \item One can prove by computing the $\beta$-invariant of the pair with respect to either $l_1$ or $l_2$.
\end{enumerate}
\end{proof}

\begin{prop}\label{111}
Let $0<c<\frac{1}{2}$ be a rational number. 
    \begin{enumerate}
        \item The log Fano pair $(\Sigma_5,cG_{\frac{2}{19}})$ admits a special degeneration to $(X_{1,1},cG'_{\frac{2}{19}})$ for some $G'_{\frac{2}{19}}$.
        \item The pairs $(\Sigma_5,cG_{\frac{2}{19}})$ is K-semistable if and only if $0<c\leq\frac{2}{19}$.
        \item  Let $C\in|-2K_{X_{1,1}}|$ be a curve, which comes from the plane sextic curve $\wt{C}$ whose defining equation is $$f_6(x,y)+zf_5(x,y)+\cdots+z^6f_0(x,y),$$ where $f_6(x,y)$ is non-zero and not divided by either $x^4$ or $y^4$. Then $(X_{1,1},cC)$ is K-unstable for any $0<c<\frac{2}{19}$ and K-semistable for $c=\frac{2}{19}$. Moreover, if $(X_{1,1},\frac{1}{2}C)$ is klt, then $(X_{1,1},cC)$ is K-stable for any $\frac{2}{19}<c<\frac{1}{2}$.
    \end{enumerate}
\end{prop}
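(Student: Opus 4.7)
The plan is to follow the three-step template established for the first wall (Theorem \ref{1}) and for Proposition \ref{110}. Write $c_* = \tfrac{2}{19}$ and $r = 2$.

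\emph{Part (1).} I will construct the special degeneration by iterating the construction in the proof of Theorem \ref{1}: blow up the total space $\Sigma_5 \times \mathbb{A}^1$ first along $L_1 \times \{0\}$ and then along the proper transform of $L_2 \times \{0\}$, and run MMP on the central fiber to contract the original $\Sigma_5$-component. Each step contracts a $(-1)$-curve to an $A_1$-singularity, so the central fiber is a quintic del Pezzo surface with two $A_1$-singularities, i.e., $X_{1,1}$. Tracking the boundary $2L_1 + 2L_2 + L_3 + L_4$ through this process, the limit $G'_{\frac{2}{19}}$ consists of the new curves replacing $L_1, L_2$ together with the proper transforms of $L_3, L_4$; the latter become two distinct lines through a smooth point of $X_{1,1}$ away from both singularities. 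This identifies $G'_{\frac{2}{19}}$ with case (3) of Proposition \ref{112}, hence K-polystable at $c = c_*$.

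\emph{Part (2).} K-semistability of $(\Sigma_5, c_* G_{\frac{2}{19}})$ will follow from Part (1), Proposition \ref{112}(3), and openness of K-semistability (Theorem \ref{35}). For $c \in (0, c_*)$, I exploit that $\beta_{(\Sigma_5, cG_{\frac{2}{19}})}(E)$ is affine-linear in $c$ for every divisor $E$ over $\Sigma_5$, because
\[
A_{(\Sigma_5, cG)}(E) = A_{\Sigma_5}(E) - c\,\ord_E(G), \qquad S_{(\Sigma_5, cG)}(E) = (1 - 2c)\, S_{\Sigma_5}(E).
\]
Since $\Sigma_5$ is K-stable \cite{PW18}, $\beta \geq 0$ at $c = 0$, and by the above $\beta \geq 0$ at $c = c_*$, so affine linearity yields K-semistability on all of $[0, c_*]$. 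For $c > c_*$, I test with the divisorial valuation $v$ on $\Sigma_5$ induced by the test configuration of Part (1): $\beta_{\Sigma_5}(v) > 0$ by K-stability of $\Sigma_5$, while $\beta_{(\Sigma_5, c_*G)}(v) = 0$ because the nontrivial degeneration to the K-polystable $(X_{1,1}, c_* G'_{\frac{2}{19}})$ witnesses strict K-semistability; affine linearity then forces a strictly negative slope in $c$, so $\beta(v) < 0$ for $c > c_*$.

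\emph{Part (3).} I apply the same template directly on $X_{1,1}$, using the toric structure $\Aut^0(X_{1,1}) = \mathbb{G}_m^2$. A suitable one-parameter subgroup $\lambda$ sends $C$ to a torus-invariant limit $C_0 = \lim_{t \to 0} \lambda(t)\cdot C$, and the hypotheses on the equation of $\tilde{C}$ (namely $f_6 \neq 0$, not divisible by $x^4$ or $y^4$) are exactly what is needed to force $(X_{1,1}, c_* C_0)$ into case (3) of Proposition \ref{112} and not into the toric case (1), the strictly semistable case (2), or the unstable case (4). Openness then yields K-semistability at $c = c_*$, and K-instability for $c \in (0, c_*)$ is extracted from the same affine-linear $\beta$-invariant associated to $\lambda$, with the sign determined by the calculation. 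Finally, for $c \in (c_*, \tfrac{1}{2})$ under the klt hypothesis on $(X_{1,1}, \tfrac{1}{2}C)$, I invoke interpolation: $\beta_{(X_{1,1}, cC)}(E)$ is affine-linear in $c$ with $\beta(c_*) \geq 0$ by K-semistability, and at $c = \tfrac{1}{2}$ the formal identity
\[
\beta_{(X_{1,1}, cC)}(E)\big|_{c = 1/2} = A_{(X_{1,1}, \frac{1}{2}C)}(E) > 0
\]
holds (since $-K_{X_{1,1}} - \tfrac{1}{2}C \sim_{\mathbb{Q}} 0$ kills the $S$-term and klt makes the $A$-term strictly positive). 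Affine linearity then forces $\beta(E)(c) > 0$ on $(c_*, \tfrac{1}{2}]$ for every $E$, yielding K-stability.

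The hardest step will be Part (1): the explicit identification of the limit $G'_{\frac{2}{19}}$ with the K-polystable case (3) of Proposition \ref{112}, as opposed to cases (1), (2), or (4). This requires careful bookkeeping of how $L_3, L_4$ transform through two successive blow-ups and contractions, and uses crucially that $L_3, L_4$ are two distinct lines through $p = L_1 \cap L_2$ not coinciding with $L_1$ or $L_2$.
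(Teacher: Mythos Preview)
Your proposal follows the same three-step template as the paper, and the overall architecture is correct. There is, however, one recurring inaccuracy you should fix.

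In both Part (1) and Part (3) you assert that the limit divisor lands specifically in case (3) of Proposition \ref{112}. This is not what the hypotheses guarantee. In Part (3) the key structural fact (which the paper states and you omit) is that membership in $|-2K_{X_{1,1}}|$ forces $f_6(x,y)=x^2y^2 g_2(x,y)$ for some quadratic $g_2$; the diagonal action $x\mapsto tx,\ y\mapsto ty$ then degenerates $C$ to $2l_1+2l_2+\pi^*(g_2=0)$. The hypothesis ``$f_6$ not divisible by $x^4$ or $y^4$'' is exactly ``$g_2$ not divisible by $x^2$ or $y^2$'', which rules out case (4) only. The limit can perfectly well land in case (1) (if $g_2=xy$) or case (2) (if $g_2=x(x+ay)$). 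The same issue arises in Part (1): depending on the position of $L_3,L_4$ relative to $L_1,L_2$, the limit $G'_{\frac{2}{19}}$ may be any of cases (1)--(3). Your argument survives because all three cases are K-semistable at $c_*=\tfrac{2}{19}$, and openness only needs K-semistability of the limit; but as written your justification for the key step is incorrect.

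Two smaller points. First, your Part (2) argument for K-instability when $c>c_*$ via the valuation of the test configuration is valid but indirect; the paper's approach (implicit via ``same proof'') is to exhibit an explicit destabilizing divisor. For Part (3) the paper computes $\beta_{(X_{1,1},cC)}(l)=\tfrac{2(19c-2)}{15}<0$ for $c<c_*$ directly, where $l$ is the proper transform of $\{z=0\}$; this is less delicate than your affine-linearity-plus-Futaki-zero route. Second, your interpolation argument for $c\in(c_*,\tfrac12)$ is correct and is essentially a proof of the relevant version of Theorem \ref{36}; note that the standard statement of interpolation assumes $X$ K-semistable, which $X_{1,1}$ is not, so spelling it out as you do is appropriate here.
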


\begin{proof}
    The same proof as for Proposition \ref{110} applies here. We only add an explanation for (3). Since $f_6(x,y)$ is non-zero, then it has to be of the form $x^2y^2g_2(x,y)$ for some non-zero degree $2$ polynomial $g_2(x,y)$. Thus the $\mb{G}_m$-action given by $$x\mapsto tx,\quad y\mapsto ty$$ induces a special degeneration of $(X_{1,1},cC)$ to $(X_{1,1},cG'_{\frac{2}{19}})$. As $f_6(x,y)$ is not divided by $x^4$ or $y^4$, then $(X_{1,1},\frac{2}{19}G'_{\frac{2}{19}})$ is K-polystable. This proves that $(X_{1,1},\frac{2}{19}C)$ is K-semistable. By computing the $\beta$-invariant of $(X_{1,1},cC)$ with respect to $l$, we see that $$\beta_{(X_{1,1},cC)}(l)= 1-\frac{19(1-2c)}{15}=-\frac{2(19c-2)}{15}<0$$ for any $0<c<\frac{2}{19}$.
\end{proof}

In fact, the condition that $C\in|-2K_{X_{1,1}}|$ implies that the defining polynomial of the corresponding $\wt{C}$ satisfies that $$f_5(x,y)=x^2y^2g_1(x,y),\quad f_4(x,y)=xyg'_2(x,y),\quad \textup{and}\quad f_3(x,y)=xyg'_1(x,y).$$ Moreover, if $g_1$ and $g_2$ are both zero, then the pair $(X_{1,1},cC)$ is K-unstable for any $0<c<\frac{1}{2}$. Thus the similar computation gives us other walls corresponding to the surface $X_{1,1}$ (cf. Table \ref{Kwall2}).

\begin{center}
\renewcommand*{\arraystretch}{1.2}
\begin{table}[ht]
    \centering
      \begin{tabular}{ |c  |c |c|c|}
    \hline
     wall & curves & weight & $\beta$-invariant    \\ \hline 
     
     $\frac{2}{19} $  &  $x^2y^2(x-ay)(y-bx)$  & $(1,1,0)$  &   $2(19c-2)/15$
     \\ \hline
     
     $\frac{1}{7} $  &  $x^2y^2(x^2-yz)$  & $(1,2,0)$  &   $(7c-1)/5$
     \\ \hline     
     
     $\frac{4}{23} $  &  $xy^2(z^2y-x^3)$  & $(1,3,0)$  &   $2(23c-4)/15$
     \\ \hline     
     
     $\frac{2}{9} $  &  $xy^2z(x^2-yz)$  & $(1,2,0)$  &   $(9c-2)/5$
     \\ \hline

   \end{tabular}
    \caption{Walls corresponding to $X_{1,1}$}
    \label{Kwall2}
\end{table}
\end{center}

\subsection{The surface $X_{1,2}$}\label{109}

Let $\Sigma_5$ be the smooth del Pezzo surface obtained by blowing up $\mb{P}^2$ along $4$ general points $p_1,...,p_4$ with exceptional divisors $E_1,...,E_4$ respectively. Let $L$ be the proper transforms of the line passing through $p_1,p_2$, and $C_1,C_2$ be the proper transform of two conics passing through $p_3,p_4$ and tangent to $L$ at a point $p$ such that not both of them are singular. Then $D_{\frac{4}{23}}:=2L+C_1+C_2$ is a section in $|-2K_{\Sigma_5}|$.

Let $X_{1,2}$ be the quintic del Pezzo surface with an $A_1$-singularity and an $A_2$-singularity obtained as following. Blow up $\mb{P}^2_{(x:y:z)}$ along two tangent vectors: the one supported on $(0:1:0)$ along $\{x=0\}$ and the one supported on $(0:0:1)$ along $\{y=0\}$. Let $F_1,F_2$ be the exceptional divisors over $(0:1:0)$ with $F_1^2=-2$, $F_2^2=-1$, and $F_3,F_4$ be the exceptional divisors over $(0:0:1)$ with $F_3^2=-2$, $F_4^2=-1$. Then contract the three $(-2)$-curves: $F_1,F_3$ and the proper transform of $\{x=0\}$. Then the resulting surface, denoted by $X_{1,2}$, has an $A_1$-singularity $P$ and an $A_2$-singularity $Q$. Let $C'_1,C'_2$ be the proper transformation of two conics $\{x^2-ayz=0\}$ such that not both of them are singular, and $L_y$ be the proper transform of the line $\{y=0\}$. Then $D'_{\frac{4}{23}}=C'_1+C'_2+2L'$ is a section in $|-2K_{X_{1,2}}|$.

\begin{prop}\label{114}
Let $0<c<\frac{1}{2}$ be a rational number.
    \begin{enumerate}
        \item The log Fano pair $(X_{1,2},cD'_{\frac{4}{23}})$ is K-polystable if and only if $c=\frac{4}{23}$.
        \item The log Fano pair $(\Sigma_5,cD_{\frac{4}{23}})$ admits a special degeneration to $(X_{1,2},cD'_{\frac{4}{23}})$.
        \item The pairs $(\Sigma_5,cD_{\frac{4}{23}})$ is K-semistable if and only if $0<c\leq\frac{4}{23}$.
        \item Let $C\in|-2K_{X_{1,2}}|$ be a curve such that either $\mult_{P}(C)\geq2$ or $\mult_{Q}(C)\geq2$. Then the pair $(X_{1,2},cC)$ is K-unstable for any $0<c<\frac{1}{2}$.
        \item  Let $C\in|-2K_{X_{1,2}}|$ be a curve such that and $P,Q\notin C$. Then $(X_{1,2},cC)$ is K-unstable for any $0<c<\frac{4}{23}$ and K-semistable for $c=\frac{4}{23}$. Moreover, if $(X_{1,2},\frac{1}{2}C)$ is klt, then $(X_{1,2},cC)$ is K-stable for any $\frac{4}{23}<c<\frac{1}{2}$.
    \end{enumerate}
\end{prop}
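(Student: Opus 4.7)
The proof will follow the template established for $X_1$ in Proposition \ref{110} and for $X_{1,1}$ in Propositions \ref{112}--\ref{111}, replacing the toric/complexity-one analysis with the appropriate one for $X_{1,2}$. The starting observation is that $X_{1,2}$ is a complexity-one $\mathbb{T}$-surface: the $\mathbb{G}_m$-action on $\mathbb{P}^2$ given by $(x:y:z)\mapsto(x:ty:t^{-1}z)$ preserves both the tangent directions $\{x=0\}$ at $(0:1:0)$ and $\{y=0\}$ at $(0:0:1)$, hence lifts to $X_{1,2}$, fixing the two singular points $P$ and $Q$. Under this action, the divisor $D'_{\frac{4}{23}}=2L_y+C'_1+C'_2$ is $\mathbb{G}_m$-invariant when $C'_1,C'_2$ are given by $\{x^2-a_iyz=0\}$, with $L_y$ horizontal and the components of $C'_1+C'_2$ vertical.

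For part (1), I apply Theorem \ref{71}. First I compute $A_{(X_{1,2},cD'_{\frac{4}{23}})}(L_y)=1-2c$ and then perform the Zariski decomposition of $-K_{X_{1,2}}-tL_y$ on the minimal resolution by tracking the pullback, the $(-2)$-curves over $P,Q$, and the proper transforms of $C'_1,C'_2$; the formula will break into two or three linear pieces in $t$, and integrating $\vol(-K_{X_{1,2}}-tL_y)$ over $[0,t_{\max}]$ yields the critical value $c=\frac{4}{23}$ from $\beta_{(X_{1,2},cD'_{\frac{4}{23}})}(L_y)=0$. One then verifies that the remaining vertical divisors (the components $C'_i$ and the exceptional divisors over $P,Q$) have strictly positive $\beta$-invariants at $c=\frac{4}{23}$ — these reduce to finite elementary inequalities — and that the Futaki invariant of the induced $\mathbb{G}_m$-valuation vanishes, following \cite[Lemma 3.3--3.4]{Xu21} just as in the proof of Lemma \ref{104}. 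Polystability at $c=\frac{4}{23}$ and instability for $c\neq\frac{4}{23}$ then follow.

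For part (2), I construct the degeneration by a two-step MMP on a trivial family $\Sigma_5\times\mathbb{A}^1\to\mathbb{A}^1$, analogous to the construction in Lemma 3.4 of the paper. Specifically I blow up along $L\times\{0\}$ and then along the proper transform of the point $p\times\{0\}$ (the tangency point of $C_1,C_2$ with $L$), and then run a relative MMP contracting the unstable components on the central fiber; the resulting central surface is quintic with an $A_1$- and an $A_2$-singularity, i.e.\ $X_{1,2}$, and tracking the support of $D_{\frac{4}{23}}$ shows the limiting divisor is $D'_{\frac{4}{23}}$. Part (3) is then immediate: openness of K-semistability (Theorem \ref{35}) together with (1) gives K-semistability at $c=\frac{4}{23}$, interpolation (Theorem \ref{36}) extends it to $0<c<\frac{4}{23}$, and the $\beta$-invariant computation with respect to $L\subset\Sigma_5$ (a repetition of the one in Lemma \ref{2}) rules out $c>\frac{4}{23}$.

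For part (4), the key computation is the $\beta$-invariant with respect to one of the Kollár components over $P$ or $Q$. Let $E_P$ be the exceptional divisor over $P$ on the minimal resolution and $E_Q$ over $Q$. A Zariski decomposition analogous to the one used in Proposition \ref{110}(5) shows that $S_{(X_{1,2},cC)}(E_\bullet)\geq \tfrac{k(1-2c)}{m}$ for explicit $k,m$, while $A_{(X_{1,2},cC)}(E_\bullet)\leq 1-2c$ as soon as $\mathrm{mult}_\bullet(C)\geq 2$ (after accounting for the quotient singularity structure); this forces $\beta<0$ for all $c\in(0,\tfrac12)$. Part (5) then proceeds exactly as in Proposition \ref{111}(3): the hypothesis $P,Q\notin C$ implies that in local coordinates the defining equation of the plane sextic admits a nonzero $x^2y^2$-type monomial, and the $\mathbb{G}_m$-action above specially degenerates $(X_{1,2},cC)$ to the polystable pair $(X_{1,2},cD'_{\frac{4}{23}})$ for a suitable choice of $a_1,a_2$; openness gives K-semistability at $c=\frac{4}{23}$, interpolation upgrades this to K-stability for $c\in(\frac{4}{23},\frac{1}{2})$ under the klt assumption, and computing $\beta_{(X_{1,2},cC)}(L_y)$ gives K-instability for $c<\frac{4}{23}$. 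The main technical obstacle throughout is the Zariski decomposition of $-K_{X_{1,2}}-tL_y$ on the minimal resolution — one must carefully determine the piecewise-linear structure and the correct combinatorics of which exceptional and vertical components enter the negative part, but this is a routine if tedious extension of the $\Sigma_5$ computation in Lemma \ref{2}.
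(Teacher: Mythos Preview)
Your strategy matches the paper's, but two execution details are wrong and would derail the argument as written.

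First, $L_y$ is not horizontal for your $\mathbb{G}_m$-action: on $\{y=0\}$ the map $(x:0:z)\mapsto(x:0:t^{-1}z)$ moves points, so $L_y$ is vertical. In fact no one-parameter subgroup of $\Aut^0(X_{1,2},D'_{4/23})$ fixes $L_y$ pointwise --- such an action would have to scale $x$ and $z$ equally, and then could not preserve the conic components. The genuine horizontal divisors are among the exceptional $(-1)$-curves $F_2,F_4$, depending on which $1$-PS you choose. The paper bypasses this issue by computing the $\beta$-invariant of $F_3$, the $(-2)$-curve over the $A_2$-point $Q$: with $A_{(X_{1,2},cC)}(F_3)=1$ and $S_{(X_{1,2},cC)}(F_3)=\tfrac{23(1-2c)}{15}$ one gets $\beta_{F_3}=\tfrac{2(23c-4)}{15}$, which both pins down $c=\tfrac{4}{23}$ and yields the K-instability claim in (5) for $c<\tfrac{4}{23}$.

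Second, the degeneration in (5) fails under the weight-$(0,1,-1)$ action. The conditions $C\in|-2K_{X_{1,2}}|$ and $P,Q\notin C$ force the defining sextic to have exactly $y^2z^4$ and $x^2y^4$ as its extremal monomials for the grading $\deg_y-\deg_z$, so the limit under your action (in either direction) is a monomial curve, not $2L_y+C'_1+C'_2$. The paper instead uses the action of weight $(0,2,1)$ and checks, monomial by monomial, that the limit is $y^2(xy-a_1z^2)(xy-a_2z^2)$ with $a_1,a_2\neq 0$. (A side remark: the paper's setup text writes the conics $C'_i$ as $x^2-a_iyz$; this appears to be a slip, since such conics have the wrong tangent direction at $(0{:}1{:}0)$ and the resulting divisor would not lie in $|-2K_{X_{1,2}}|$. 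The proof itself and Table~\ref{Kwall3} use $xy-a_iz^2$, and it is for these that $(0,2,1)$ is the stabilizing action.) With $F_3$ in place of $L_y$ and the $(0,2,1)$-degeneration, your template becomes the paper's argument.
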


\begin{proof}
    The first three statements are proved similarly as in Section \ref{107} and \ref{108}. For (4), one has that $A_{(X_{1,2},cC)}(F_3)=1$ for the divisor $F_3$ over the singularity $Q$, and that  
    \begin{equation}
\begin{split}
    S_{(X_{1,2},cC)}(F_3)&=\frac{1-2c}{5}\int_0^{4}\vol(-K_{X_{1,2}}-tF_3)dt\\
    &=\frac{1-2c}{5}\left(\int_0^{3}5-2t+\frac{t^2}{6}dt+\int_{3}^42(2-t/2)^2dt\right)\\
    &=\frac{23(1-2c)}{15}.
\end{split}
\end{equation} so that $\beta_{(X_{1,2}.cC)}(F_3)=\frac{2(23c-4)}{15}<0$ for any $0<c<\frac{4}{23}$. Thus $(X_{1,1},cC)$ is K-unstable for any $0<c<\frac{4}{23}$. Let $\lambda$ be the $\mb{G}_m$-action on the pair $(X_{1,2},cD'_{\frac{4}{23}})$, which is induced from the $\mb{G}_m$-action on $\mb{P}^2$ defined by $$t\cdot (x:y:z):=(x:t^2y:tz).$$ We claim that the $$\lim_{t\to 0}t\cdot(X_{1,2},cC)=(X_{1,2},cD'_{\frac{4}{23}}).$$ Notice that $C$ comes from a plane sextic curve $\wt{C}\subseteq \mb{P}^2_{(x:y:z)}$  which contains through the locus to blow up of multiplicity at least two. Suppose $\wt{C}$ is defined by a polynomial $$f(x,y,z)=z^6f_0(x,y)+z^5f_1(x,y)+\cdots+f_6(x,y).$$ Since $Q\notin C$, then we have that $f_0=f_1=0$, $f_2(x,y)=ay^2$ for some $a\in \mb{C}^{*}$, and the coefficients of $yx^2$ and $x^3$ in $f_3(x,y)$ are zero. The condition $C\in |-2K_{X_{1,2}}|$ also implies that each monomial in $f(x,y,z)$ with non-zero coefficient cannot be divided by $y^5$. Since $P\notin C$, then the coefficient of some monomial divided by $y^4$ is non-zero, and this monomial has to be $y^4x^2$. In this case, we also have that the coefficient of $y^3$ in $f_3(x,y)$ is zero. Thus we have that $$\lim_{t\to 0}t\cdot f(x,y,z)=y^2(xy-a_1z^2)(xy-a_2z^2)$$ up to a constant scaling, where $a_1,a_2\in\mb{C}^{*}$. Therefore, by openness of K-semistability, we have that $(X_{1,2},\frac{4}{23}C)$ is K-semistable.
\end{proof}

Similar argument as in Section \ref{108} gives us all the walls corresponding to $X_{1,2}$ (cf. Table \ref{Kwall3}).

\begin{center}
\renewcommand*{\arraystretch}{1.2}
\begin{table}[ht]
    \centering
      \begin{tabular}{ |c  |c |c|c|}
    \hline
     wall & curves & weight & $\beta$-invariant    \\ \hline 
     
     $\frac{4}{23} $  &  $y^2(xy-z^2)(axy-z^2)$  & $(0,2,1)$  &   $2(23c-4)/15$
     \\ \hline
     
     $\frac{1}{5} $  &  $y^2z(z^3-x^2y)$  & $(0,3,1)$  &   $2(5c-1)/3$
     \\ \hline     
     
     $\frac{2}{9} $  &  $y^2(yx^3-z^4)$  & $(0,4,1)$  &   $2(9c-2)/5$
     \\ \hline     
     
     $\frac{2}{9} $  &  $xy^2z^2(y-z)$  & $(0,1,1)$  &   $(23c-4)/15$
     \\ \hline     
     
     $\frac{7}{29} $  &  $y^2xz(xy-z^2)$  & $(0,2,1)$  &   $2(29c-7)/15$
     \\ \hline
     
     $\frac{8}{31} $  &  $y^2x(xy^2-z^3)$  & $(0,3,2)$  &   $(31c-8)/15$
     \\ \hline
          
     $\frac{2}{7} $  &  $yxz^2(y^2-xz)$  & $(0,1,2)$  &   $(7c-2)/3$
     \\ \hline
     
     $\frac{2}{7} $  &  $xy^2(z^3-yx^2)$  & $(0,3,1)$  &   $(7c-2)/3$
     \\ \hline   
   \end{tabular}
    \caption{Walls corresponding to $X_{1,2}$}
    \label{Kwall3}
\end{table}
\end{center}

\begin{remark}\label{203}
    \textup{For any wall $c=c_i\neq \frac{1}{17}$ corresponding to ADE del Pezzo surfaces, the wall-crossing morphism $\ove{M}^K(c_i+\varepsilon)\rightarrow\ove{M}^K(c_i)$ is a flipping contraction. In particular, they do not contribute the Picard number of $\ove{M}^K(c)$. Also, for every smooth genus six curve $C$ with finitely many $g^2_6$, there is a K-stable log Fano pair $(X,cC)$, where $c=\frac{1}{2}-\varepsilon$ and $X$ is an ADE quintic del Pezzo surface.}
\end{remark}

\section{The second divisorial contraction: trigonal curves}

In this section, we study the second divisorial contraction $\ove{M}^K(c_i+\varepsilon)\rightarrow \ove{M}^K(c_i)$ occurring at the wall $c_i=\frac{11}{52}$. In particular, a general K-polystable pair $(X,cC)\in\ove{M}^K(c)$ on the exceptional divisors has boundary divisor $C$ a genus six trigonal curve.

A general genus six smooth trigonal curve $C$ is canonically embedded into $\mb{P}_{x,y}^1\times\mb{P}_{u,v}^1$ as a divisor of class $\mtc{O}_{\mb{P}^1\times\mb{P}^1}(3,4)$. Let $L_x$ and $L_y$ be the rulings defined by $\{x=0\}$ and $\{y=0\}$ respectively. Assume that $L_x$ is not a component of $C$. Blow up $\mb{P}^1\times\mb{P}^1$ along the length $4$ subscheme $L_x\cap C$, and contract the proper transform of $L_x$. Contract also the $(-2)$-curves if $L_x$ does not intersect $C$ transversely. Denote the induced surface by $X$. One sees that $X$ is a $\mb{Q}$-Fano surface of degree $5$, which is isomorphic to the blow-up of $\mb{P}(1,1,4)_{p,q,r}$ along four points (not necessarily distinct) on the infinity section $\{r=0\}$. Moreover, the proper transform of $C$ on $X$ is of the divisor class $-2K_{X}$.

Conversely, let $X$ be the blow-up of $\mb{P}(1,1,4)$ along four points on the infinity section, and $C\in|-2K_X|$ is an effective divisor not passing through the $\frac{1}{4}(1,1)$-singularity $P$. Let $l_1,...,l_4$ be the proper transforms on $X$ of the rulings passing through the four blown-up points on $\mb{P}(1,1,4)$. Blowing up $X$ at $P$ and contracting the proper transforms of $l_1,...,l_4$, one obtains a smooth quadric $\mb{P}^1\times \mb{P}^1$. Moreover, the proper transform of $C$ on $\mb{P}^1\times \mb{P}^1$ is a divisor of the class $\mtc{O}_{\mb{P}^1\times \mb{P}^1}(3,4)$.

\subsection{Trigonal curves in $\mb{F}_0$}

In this subsection, we study the first wall $c=\frac{11}{52}$ where blow-up of $\mb{P}(1,1,4)$ together with trigonal curves appear.

Let $X$ be an ADE quintic del Pezzo surface obtained by blowing up $\mb{P}^2$ along $4$ points $p_1,...,p_4$ (not necessarily distinct), let $D_{\frac{11}{52}}\in|-2K_X|$ be a divisor obtained from a triple conic $3Q$ on $\mb{P}^2$ passing through $p_1,...,p_4$. 

Let $X_t$ ("t" denotes "trigonal") be a degree $5$ del Pezzo surface obtained by blowing up $\mb{P}_{p,q,r}(1,1,4)$ along $4$ points $q_1,...,q_4$ on the infinity line $C'=\{r=0\}$. Here we allow $q_i=q_j$, but require that any three of them cannot collide. Let $D'_{\frac{11}{52}}=3C'+F_1+F_2+F_3+F_4$ be a section in $|-2K_{X_t}|$, where $F_i$ is the exceptional divisor over $q_i$. The $\mb{Q}$-Gorenstein degeneration of $\mb{P}^2$ to $\mb{P}(1,1,4)$ induces a $\mb{Q}$-Gorenstein degeneration of the pair $(X,cD_{\frac{11}{52}})$ to $(X_t,cD'_{\frac{11}{52}})$.

\begin{theorem}\label{3} Let $0<c<\frac{1}{2}$ be a rational number.
\begin{enumerate}
    \item The log Fano pair $(X_t,cD'_{\frac{11}{52}})$ is K-polystable if and only if $c=\frac{11}{52}$.
    \item The log Fano pair $(\Sigma_5,cD_{\frac{11}{52}})$ is K-stable if and only if $0<c<\frac{11}{52}$, and K-unstable for $\frac{11}{52}<c<\frac{1}{2}$. 
    \item Let $D\in|-2K_{X_t}|$ be a divisor. If $D$ contains the singularity $P$, then $(X_t,cD)$ is K-unstable for any $0<c<\frac{1}{2}$. If $D$ does not contain $P$, then $(X,cD)$ is  K-unstable for $0<c<\frac{11}{52}$, and $(X,\frac{11}{52}D)$ is K-semistable.
    \item The wall-crossing morphism $$\phi^{+}_{\frac{11}{52}}:\ove{M}^K\left(11/52+\varepsilon\right)\rightarrow \ove{M}^K\left(11/52\right)$$ is a divisorial contraction.
\end{enumerate}
\end{theorem}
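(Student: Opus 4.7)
I would handle the four assertions in the order (1), (2), (3), (4), adapting the template of the first-wall analysis (Section 3) to the triple-conic configuration and the degeneration target $X_t$.

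For (1), the surface $X_t$ carries a residual $\mb{G}_m$-action inherited from the torus action on $\mb{P}(1,1,4)$ that scales $p$ and $q$ oppositely, so $(X_t, cD'_{\frac{11}{52}})$ is a complexity-one $\mb{T}$-pair and Theorem \ref{71} applies. I would identify the unique horizontal $\mb{G}_m$-equivariant prime divisor as the proper transform of $C'$, while the vertical prime divisors are the exceptional curves $F_1, \ldots, F_4$, the proper transform of the zero section, and the movable one-parameter family of rulings. The key computation is $\beta_{(X_t, cD'_{\frac{11}{52}})}(C') = 0$ at $c = \tfrac{11}{52}$, obtained by a Zariski decomposition of $-K_{X_t} - tC'$ whose transitions are controlled by the $F_i$ and by the $\tfrac{1}{4}(1,1)$-singularity. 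Strictly positive $\beta$ for each vertical divisor at the same $c$ should follow from parallel but simpler computations. Vanishing of the Futaki invariant follows exactly as in Lemma \ref{104}, via the equivalence $\Fut(\lambda) = 0 \Leftrightarrow \beta(C') = 0$ for the induced special test configuration.

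For (2), compute $\beta_{(\Sigma_5, cD_{\frac{11}{52}})}(\wt{Q})$ with $\wt{Q}$ the proper transform of the conic, so that $D_{\frac{11}{52}} = 3\wt{Q} + E_1 + E_2 + E_3 + E_4$ and $A(\wt{Q}) = 1 - 3c$. For the $S$-invariant I would perform the Zariski decomposition of $-K_{\Sigma_5} - cD_{\frac{11}{52}} - t\wt{Q}$, using that $\wt{Q}^2 = 0$ (so $\wt{Q}$ moves in a pencil of conics) and that the nef thresholds are dictated by the four $(-1)$-curves $E_i$ meeting $\wt{Q}$ transversely; the expected arithmetic yields $\beta(\wt{Q}) = 0$ precisely at $c = \tfrac{11}{52}$, giving K-instability for $c > \tfrac{11}{52}$. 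A $\mb{Q}$-Gorenstein special degeneration of $(\Sigma_5, cD_{\frac{11}{52}})$ to $(X_t, cD'_{\frac{11}{52}})$, produced by the standard $\mb{Q}$-Gorenstein smoothing of $\mb{P}(1,1,4)$ to $\mb{P}^2$ along a smooth conic, together with (1) and openness of K-semistability (Theorem \ref{35}), shows K-semistability of $(\Sigma_5, \tfrac{11}{52}D_{\frac{11}{52}})$; interpolation (Theorem \ref{36}) then upgrades this to K-stability for $0 < c < \tfrac{11}{52}$.

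For (3), when $P \notin D$ the $\mb{G}_m$-action on $X_t$ specially degenerates $(X_t, cD)$ to $(X_t, cD'_{\frac{11}{52}})$, so openness gives K-semistability at $c = \tfrac{11}{52}$. When $P \in D$, I would compute $\beta$ with respect to the exceptional divisor $E_P$ over the $\tfrac{1}{4}(1,1)$-singularity; the small log discrepancy $A(E_P) = \tfrac{1}{2}$ together with the forced positive multiplicity $\ord_{E_P}(D) \geq 1$ should force $\beta < 0$ for every $c \in (0, \tfrac{1}{2})$, by a Zariski-decomposition argument parallel to that for $L_1$ in Lemma \ref{2}. For (4), the exceptional locus of $\phi^{+}_{\frac{11}{52}}$ is identified with the K-semistable locus of pairs $(X_t, cD)$ with $P \notin D$, modulo the $\mb{G}_m$-action, fibering over the moduli of quadruples $(q_1,\ldots,q_4) \subset C'$ up to $\Aut(\mb{P}(1,1,4))$. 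A dimension count using $h^0(X_t, -2K_{X_t}) = 16$ together with the $\mb{G}_m$-quotient and the $1$-parameter moduli of the $q_i$'s produces a divisor, and the forgetful map to $\ove{M}_6$ lands in the trigonal locus with generic fibre $\mb{P}^1$, matching Theorem \ref{6}(3).

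The principal obstacle will be the Zariski-decomposition computations in (1) and (2): the divisors $-K_{X_t} - tC'$ and $-K_{\Sigma_5} - cD_{\frac{11}{52}} - t\wt{Q}$ each cross several nef chambers as $t$ increases, and the threshold values depend on whether the $q_i$ on the infinity section coincide, so a uniform treatment across the $2$-parameter family of surfaces $X_t$ (with possible ADE enhancements) is required. Additionally, verifying strict positivity of $\beta$ on every vertical $\mb{G}_m$-equivariant divisor in (1) demands checking each prime in the ruling pencil together with the exceptional $F_i$, which for degenerate point configurations requires transferring positivity from the most symmetric representative via openness before applying Theorem \ref{71}.
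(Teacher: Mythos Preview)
Your approach is essentially the paper's: complexity-one criterion (Theorem \ref{71}) for (1), degeneration plus openness/interpolation for (2), $\beta$-computation against the exceptional over $P$ for (3), and a dimension count for (4). Two points need correction.

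First, the $\mb{G}_m$-action you name (scaling $p$ and $q$ \emph{oppositely}) acts nontrivially on $C'=\{r=0\}$, hence does not fix the blow-up points $q_i$ and does not lift to $X_t$; it is also inconsistent with your own assertion that $C'$ is horizontal. The correct action (the one the paper uses) is $p\mapsto tp,\ q\mapsto tq,\ r\mapsto r$, equivalently $r\mapsto t^{-4}r$, which fixes $C'$ pointwise. With this action $C'$ is the unique horizontal divisor and the vertical invariant primes are the rulings and the $F_i$; there is no ``zero section'' on $X_t$ (the $(-4)$-curve on $\mb{F}_4$ is contracted to the singular point $P$). The key computation is then exactly $A(C')=1-3c$ and $S(C')=\tfrac{19(1-2c)}{30}$, which also equals $S(\wt Q)$ on $\Sigma_5$ and gives the wall $c=\tfrac{11}{52}$.

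Second, your dimension count in (4) quotients only by a $\mb{G}_m$, but in fact $\dim\Aut^0(X_t)=2$: besides the torus there is a $\mb{G}_a$ acting by $r\mapsto r+\alpha\prod_{i=1}^4(p\,q_i-q\,p_i)$, which fixes all four points $q_i$ on $C'$. With this correction the exceptional locus has dimension $15-2+1=14$, a divisor in the $15$-dimensional moduli space, as required.
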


\begin{proof}
    Notice that if at most two of $q_1,...,q_4$ collide, then the pair $(X_t,cD_t)$ is a complexity one $\mb{T}$-pair with the $\mb{G}_m$-actions $\lambda$ induced from the $\mb{G}_m$-actions on $\mb{P}(1,1,4)_{p,q,r}$ given by $$p\mapsto t\cdot p,\quad q\mapsto t\cdot q,\quad r\mapsto r.$$ If $q_1,...,q_4$ collide pairwise, then the pair $(X_t,cD_t)$ is toric. For simplicity, we only prove for the former case. The toric case can be proven using \cite[Theorem 1.21]{CA23}.
    
    Then the only horizontal divisor with respect to the $\mb{G}_m$-action $\lambda$  is $C'$, and all vertical divisors on $X_t$ are the lines joining the singularity $P$ and $C'$.

    We compute the $\beta$-invariant of the divisor $C'$. We have that $A_{(X_t,cD'_{\frac{11}{52}})}(C')=1-3c$ and that \begin{equation}
    \begin{split}
    S_{(X_t,cD'_{\frac{11}{52}})}(C')&=\frac{1-2c}{5}\int_0^{3/2}\vol(-K_{X_t}-tC)dt\\
    &=\frac{1-2c}{5}\left(\int_0^{1}5-4tdt+\int_1^{3/2}(3-2t)^2dt\right)\\
    &=\frac{19(1-2c)}{30}.
    \end{split}
    \end{equation}
    Thus $\beta_{(X_t,cD'_{\frac{11}{52}})}(C')=0$ if and only if $c=\frac{11}{52}$. Similarly, we can check that when $c=11/52$, the $\beta$-invariants of the vertical divisors are positive. It follows from Theorem \ref{71} that $(X_t,cD'_{\frac{11}{52}})$ is K-polystable if and only if $c=\frac{11}{52}$. By openness of K-semistability and interpolation, we also have that $(X,cD_{\frac{11}{52}})$ is K-stable for $0<c<\frac{11}{52}$, and K-semistable for $c=\frac{11}{52}$. When $c>\frac{11}{52}$, by computing $\beta_{(X,cD_{\frac{11}{52}})}(C)$, we can conclude that $(X,cD_{\frac{11}{52}})$ is K-unstable.

    Now we prove (3). Let $E$ be the exceptional divisor over $P$. We can similarly compute that $S_{(X_t,cD)}(E)=\frac{13(1-2c)}{15}$. If $P$ is contained in $D$, then $$\beta_{(X_t,cD)}(E)\leq \frac{1}{2}-c-\frac{13(1-2c)}{15}\leq0$$ for any $0<c<\frac{1}{2}$. If $P\notin D$, then the condition $\beta_{(X_t,cD)}(E)\geq0$ implies that $c\geq\frac{11}{52}$. Notice that the $\mb{G}_m$-action $\lambda$ on $X_t$ induces a special degeneration of $(X_t,cD)$ to $(X_t,cD'_{\frac{11}{52}})$. Thus by openness, we conclude that $(X_t,\frac{11}{52}D)$ is K-semistable. In particular, if $(X_t,\frac{1}{2}D)$ is klt (e.g. $D$ is smooth), then $(X_t,cD)$ is K-stable for any $\frac{11}{52}<c<\frac{1}{2}$.

    For (4), observe that $\dim \Aut(X_t)=2$, $\dim |-2K_{X_t}|=15$, and that $X_t$ has a one-dimensional moduli. Thus the exceptional locus of the wall crossing morphism $\phi^{+}_{\frac{11}{52}}$ is a divisor.
\end{proof}

\begin{remark}\label{62}\textup{
    If $(X,cD)\in E^{+}_{\frac{11}{52}}$ such that $D$ is smooth, then $D$ is a trigonal genus six curve. The locus $E^{+}_{\frac{11}{52}}$ admits a rational map onto the $13$-dimensional locus of trigonal curves in $\ove{M}_6$, which can be embedded into $\mb{P}^1\times\mb{P}^1$.
}\end{remark}

Recall that when constructing $X_t$, we have a restriction on the four points $q_1,...,q_4$ on $\mb{P}(1,1,4)$ to blow up. Now we deal with the case when at least three of them coincide. In this case, a defining polynomial of $\wt{C}$, the corresponding curve of $C\in|-2K_{X_t}|$, is $$y^3f_4(u,v)+y^2xg_4(u,v)+yx^2h_4(x,y)+x^3l_4(x,y),$$ where we may assume that $f_4(u,v)$ is either $u^3v$ or $u^4$.

\begin{prop} Let $0<c<\frac{1}{2}$ be a rational number, and $X_t$ be the blow-up of $\mb{P}(1,1,4)$ along four points on the infinity section, where three of them have the same support and curvilinear with respect to the infinity section.
\begin{enumerate}
    \item Let $C_0\in|-2K_{X_t}|$ be a curve corresponding to $\wt{C}=\{y^2v(yu^3-xv^3)\}$. Then $(X_t,cC_0)$ is K-polystable if and only if $c=\frac{17}{64}$.
    \item Let $C\in|-2K_{X_t}|$ be such that $f_4(u,v)=u^3v$ and $g_4(u,v)$ has non-zero $v^4$ term. Then $(X_t,cC)$ is K-unstable for $0<c<\frac{17}{64}$ and K-semistable for $c=\frac{17}{64}$.
\end{enumerate}
\end{prop}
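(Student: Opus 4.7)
My plan follows the template of Theorem 3 (wall $c=\tfrac{11}{52}$) and Proposition 114 (wall $c=\tfrac{4}{23}$). The starting observation is that $(X_t, cC_0)$ carries a non-trivial $\mb{G}_m$-symmetry. On $\mb{P}^1_{x,y} \times \mb{P}^1_{u,v}$ consider the one-parameter subgroup $\lambda$ acting with weights $(0,3,-1,0)$ on the coordinates $(x,y,u,v)$, i.e.\ $\lambda(t)\cdot(x{:}y,\,u{:}v) = (x : t^3y,\, t^{-1}u : v)$. The length-$4$ blow-up scheme $V(u^3v)\subset L_x = \{x=0\}$, consisting of the curvilinearly supported triple point at $(0{:}1,\,1{:}0)$ plus the simple point $(0{:}1,\,0{:}1)$, is $\lambda$-stable, so $\lambda$ lifts to an action on $X_t$. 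Both monomials of $\wt C_0 = y^2v(yu^3-xv^3)$ carry $\lambda$-weight $6$, so $C_0$ is $\lambda$-invariant. Thus $\lambda \subset \Aut^0(X_t, C_0)$ and $(X_t, cC_0)$ is a log Fano surface pair of complexity one.

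For (1), I apply Theorem \ref{71}. A tangent-weight analysis at the four $\mb{G}_m$-fixed points of $\mb{P}^1\times\mb{P}^1$ shows that every $\lambda$-equivariant prime divisor on $X_t$ is vertical: every relevant blow-up center carries distinct tangent weights, so no exceptional divisor in the curvilinear tower is pointwise fixed, and the $(-4)$-curve over the $\tfrac14(1,1)$-singularity on the minimal resolution also inherits a non-trivial $\mb{G}_m$-action. Hence the polystability criterion of Theorem \ref{71} reduces to verifying
\[
\beta_{(X_t,cC_0)}(F) > 0 \text{ for every vertical $\lambda$-equivariant divisor } F, \qquad \Fut_{(X_t, cC_0)}(\lambda) = 0.
\]
Enumerating the finitely many such $F$ (strict transforms of $\{u=0\}$, $\{v=0\}$, $\{y=0\}$, the exceptional divisors in the curvilinear tower, and the minimal-resolution divisor) and computing each $S_{(X_t,cC_0)}(F)$ via Zariski decomposition of $-K_{X_t}-tF$ yields a linear-in-$c$ $\beta$. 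The Futaki condition on $\lambda$, computable by intersection theory on a toric model in the spirit of \cite[Lemma 3.3, 3.4]{Xu21} as used in Lemma \ref{104}, pins $c$ uniquely to $\tfrac{17}{64}$; verifying strict positivity of the remaining $\beta$-invariants at that value then gives K-polystability. For $c \neq \tfrac{17}{64}$ the non-vanishing Futaki invariant produces an explicit K-destabilizing equivariant valuation.

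For (2), write the defining polynomial of $\wt C$ as
\[
y^3u^3v + y^2x\bigl(\alpha v^4+\beta uv^3+\gamma u^2v^2+\delta u^3v + \varepsilon u^4\bigr) + yx^2h_4(u,v) + x^3l_4(u,v)
\]
with $\alpha\neq 0$. Under the opposite one-parameter subgroup $\lambda^{-1}$, the monomial $x^a y^b u^c v^d$ has weight $-3b+c$; a monomial-by-monomial check shows that $y^3u^3v$ and $y^2xv^4$ are the unique terms of minimal weight $-6$, so
\[
\lim_{t\to 0}\lambda(t^{-1})\cdot C \;=\; y^3u^3v + \alpha\, y^2xv^4,
\]
which is isomorphic to $C_0$ after the residual toric automorphism $v\mapsto sv$ with $s^3 = -1/\alpha$. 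This exhibits a $\mb{G}_m$-equivariant special degeneration of $(X_t, cC)$ to $(X_t, cC_0)$; combined with (1) and openness of K-semistability (Theorem \ref{35}), one obtains K-semistability of $(X_t, \tfrac{17}{64}C)$. K-instability for $0<c<\tfrac{17}{64}$ follows because the equivariant valuation that certifies $\Fut_{(X_t, cC_0)}(\lambda)=0$ at $c = \tfrac{17}{64}$ continues to satisfy $\beta_{(X_t, cC)}<0$ for smaller $c$ by linearity of $\beta$ in $c$.

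The main technical obstacle is the Zariski-decomposition bookkeeping: $X_t$ carries both the $\tfrac14(1,1)$-singularity and an iterated curvilinear tower of blow-ups at $(0{:}1,\,1{:}0)$, so the computation must be organized on a common $\lambda$-equivariant resolution that simultaneously sees all the relevant equivariant divisors, and the piecewise volume functions $\vol(-K_{X_t}-tF)$ will typically have several critical $t$-values to track.
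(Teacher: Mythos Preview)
Your approach via Theorem \ref{71} is correct in outline, but the assertion that every $\lambda$-equivariant prime divisor on $X_t$ is vertical is false, and this is not a cosmetic slip: the horizontal divisor is precisely what pins down $c=\tfrac{17}{64}$ in the paper's argument.

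First, the triple point on $L_x$ sits at $(0{:}1,\,0{:}1)$ (where $u=0$), not $(0{:}1,\,1{:}0)$: restricting $\wt{C}_0$ to $L_x=\{x=0\}$ gives $u^3v$. At this point the tangent weights in your normalization are $(-3,-1)$ for $(\xi,\eta)=(x/y,u/v)$. Now track the weights through the curvilinear tower: after the $k$-th blow-up along $L_x$, the next center has tangent weights $(-3+k,\,-1)$, so the \emph{third} blow-up center has equal tangent weights $(-1,-1)$. Consequently the third exceptional divisor $E_3$ carries the trivial $\mb{G}_m$-action and is \emph{horizontal}. On $X_t$ this is the curve through the $A_2$-singularity (the contraction of $E_1,E_2$); it is the divisor the paper denotes $l$. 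The paper then computes $A_{(X_t,cC_0)}(l)=1$ and $S_{(X_t,cC_0)}(l)=\tfrac{32(1-2c)}{15}$, so condition (2) of Theorem \ref{71} forces $c=\tfrac{17}{64}$ directly; the separate Futaki computation is then redundant (cf.\ the remark at the end of the proof of Lemma \ref{104}).

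Since you list the exceptional divisors of the curvilinear tower among the \emph{vertical} divisors to be checked for strict positivity of $\beta$, your verification as written would fail: at $c=\tfrac{17}{64}$ you would find $\beta(E_3)=0$, not $\beta(E_3)>0$, and wrongly conclude non-polystability. Once $E_3$ is correctly reclassified as horizontal, the rest of your plan for (1) goes through. Your argument for (2)---the special degeneration to $C_0$ via $\lambda$ together with openness---matches the paper's and is sound; K-instability for $c<\tfrac{17}{64}$ then follows immediately from $\beta_{(X_t,cC)}(l)=\tfrac{64c-17}{15}<0$, since $A(l)=1$ and $S(l)$ depend only on the numerical class of $C$.
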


\begin{proof}
    Since $(X_t,cC_0)$ is a $\mb{T}$-pair of complexity one, we can apply Theorem \ref{71}. The horizontal divisor on $X_t$ is the proper transform on $X_t$ of the ruling, which passes through the $A_2$-singularity of $X_t$. Denote it by $l$. We have that $$A_{(X_t,cC_0)}(l)=1,\quad \textup{and}\quad S_{(X_t,cC_0)}(l)=\frac{32(1-2c)}{15}.$$ Thus $\beta_{(X_t,cC_0)}(l)=0$ if and only if $c=\frac{17}{64}$. Similarly, one can check that $\beta_{(X_t,cC_0)}(E)>0$ for any other $\mb{G}_m$-invariant vertical divisor $E$ on $X_t$.

    The computation of $\beta$-invariant with respect to $l$ above shows that if $(X,cC)$ is K-semistable, then $c\geq \frac{17}{64}$. Moreover, the $\mb{G}_m$-action $\lambda$ induced from the one on $\mb{P}^1\times\mb{P}^1$ defined by $$x\mapsto t^{-3}x,\quad y\mapsto t^{3}y,\quad u\mapsto t^{-1}u,\quad v\mapsto tv$$ yields a special degeneration from $(X_t,cC)$ to $(X_t,cC_0)$. Thus $(X_t,\frac{17}{64}C)$ is K-semistable by openness.
\end{proof}

Applying the same argument, we can get the following table of walls corresponding to surfaces of type $X_t$. If a curve $C$ does not admit an isotrivial degeneration to one of the curves in the following table, then the pair $(X_t,cC)$ is K-unstable for any $0<c<\frac{1}{2}$.

\begin{center}
\renewcommand*{\arraystretch}{1.2}
\begin{table}[ht]
    \centering
      \begin{tabular}{ |c  |c |c|c|}
    \hline
     wall & curves & weight & $\beta$-invariant    \\ \hline 
     
     $\frac{17}{64} $  &  $y^2v(yu^3-xv^3)$  & $(-3,3;-1,1)$  &   $(64c-17)/15$
     \\ \hline

    $\frac{23}{76} $  &  $y^2uv(yu^2-xv^2)$  & $(-2,2;-1,1)$  &   $(76c-23)/15$
     \\ \hline 
     
     $\frac{23}{76} $  &  $y^2(yu^4-xv^4)$  & $(-4,4;-1,1)$  &   $(76c-23)/15$
     \\ \hline     

     $\frac{7}{20} $  &  $yuv(y^2u^3-x^2v^3)$  & $(-3,3;-2,2)$  &   $(20c-7)/3$
     \\ \hline 
     
     $\frac{7}{20} $  &  $y^2u(yu^3-xv^3)$  & $(-3,3;-1,1)$  &   $(20c-7)/3$
     \\ \hline

   \end{tabular}
    \caption{Walls corresponding to $X_{t}$}
    \label{Kwall4}
\end{table}
\end{center}

\subsection{Trigonal curves in $\mb{F}_2$}\label{117}

In this section, we will study the sublocus of our K-moduli spaces corresponding to the quintic del Pezzo surfaces of type (iii) in Theorem \ref{40}, and those genus six trigonal curves which are canonically embedded into the Hirzebruch surface $\mb{F}_2$.

Let $\Sigma_5$ be the smooth del Pezzo surface obtained by blowing up $\mb{P}^2$ along $4$ general points $p_1,...,p_4$ with exceptional divisors $E_1,...,E_4$ respectively. Let $L_1$ be a general line on $\mb{P}^2$, and $L_2$ be the line connecting $p_3,p_4$, which intersects with $L_1$ at the point $p$. Let $C_1,C_2$ be two conics that are tangent to $L_1$ at $p$ and passing through $p_1,p_2,p_3$ and $p_1,p_2,p_4$ respectively. Then the plane sextic curve $L_1+L_2+C_1+C_2$ induces a section in $|-2K_{\Sigma_5}|$, denoted by $D_{\frac{13}{41}}$.

Let $X'$ be the degree $5$ del Pezzo surface corresponding to the type (iii) in Theorem \ref{40}. Precisely, $X'$ is the anti-canonical ample model of the blow-up of $\mb{F}_2$ along three points $q_1,q_2,q_3$ on a fixed fiber outside the negative section, and along a general point on the negative section. Here, we require that at most two of $q_1,...,q_3$ can collide. The surface $X'$ acquires a singularity $P$ of type $\frac{1}{8}(1,3)$. Let $F_1,F_2,F_3$ be the exceptional divisors over points on the fiber and $G$ the one over the point on the section. Notice that such a surface admits a $\mb{G}_m$ action, where the $\mb{G}_m$-equivariant divisors are the fiber at infinity and the sections in the class $E+2F$, $E+2F-F_i$ or exceptional divisors $G$ and $F_i$. Let $C'_1,C'_2,C'_3$ be the three divisors $E+2F-F_i$ and $L'=F-G$ the fiber at infinity. Then $D'_{\frac{13}{41}}:=C'_1+C'_2+C'_3+L'$ is a section in $|-2K_{X'}|$. Notice that the log Fano pair $(X',cD'_{\frac{13}{41}})$ is a $\mb{T}$-pair of complexity one, thus we have the following result.

\begin{prop} Let $0<c<\frac{1}{2}$ be a rational number.
\begin{enumerate}
    \item The log Fano pair $(X',cD'_{\frac{13}{41}})$ is K-polystable if and only if $c=\frac{13}{41}$.
    \item If the pair $(\Sigma_5,cD_{\frac{13}{41}})$ is K-semistable, then $0<c\leq\frac{13}{41}$.
    \item The log Fano pair $(\Sigma_5,cD_{\frac{13}{41}})$ admits a degeneration to $(X',cD'_{\frac{13}{41}})$.
\end{enumerate}
    
\end{prop}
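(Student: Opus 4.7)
The proof will mirror the pattern of Theorem \ref{3} and Proposition \ref{114}: exploit the complexity-one $\mb{T}$-structure on $(X',cD'_{\frac{13}{41}})$ to pin down K-polystability at $c=\frac{13}{41}$, construct the degeneration explicitly, and then deduce the bound on K-semistability of $(\Sigma_5,cD_{\frac{13}{41}})$ from these two inputs. I would prove the three claims in the order (1), (3), (2).

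For (1), I would apply Theorem \ref{71}. Fix the $\mb{G}_m$-action on $X'$ as the lift of the base action of $\mb{G}_m$ on the ruled surface $\mb{F}_2\to\mb{P}^1$, arranged so that the blown-up fiber $l$ sits over a base-fixed point (hence is pointwise fixed, automatically fixing $q_1,q_2,q_3$) and so that $q_4$ is the other fixed point on the negative section $\sigma$. Under this action, the only horizontal equivariant divisor is the proper transform $L'$ of the opposite fiber (pointwise fixed away from the exceptional locus), while $C'_1,C'_2,C'_3$, the exceptional divisors $G$ and $F_i$, and the proper transforms of $\sigma$ and $\sigma_\infty$ are vertical. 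The key computation is $A_{(X',cD'_{\frac{13}{41}})}(L')=1-c$ together with a Zariski decomposition of $-K_{X'}-tL'$ to evaluate $S_{(X',cD'_{\frac{13}{41}})}(L')$; solving $\beta(L')=0$ should yield $c=\frac{13}{41}$. I would then verify $\beta>0$ at $c=\frac{13}{41}$ on each vertical equivariant divisor, routine in the spirit of Section \ref{107}, and conclude vanishing of the Futaki invariant for the $\mb{G}_m$-action from $\beta(L')=0$ via the associated special test configuration, exactly as in Lemma \ref{104}.

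For (3), I would build an explicit $\mb{Q}$-Gorenstein degeneration of $(\Sigma_5,cD_{\frac{13}{41}})$. The curve $D_{\frac{13}{41}}=L_1+L_2+C_1+C_2$ has a deep multi-tangency at $p=L_1\cap L_2$: locally, with coordinates $(u,v)$ for which $L_1=\{u=0\}$ and $L_2=\{v=0\}$, the two conics take the form $C_i\colon u-a_iv^2+\cdots=0$ with distinct $a_i\in\mb{C}^\ast$. This is well-suited to a $(1,2)$-weighted blow-up of $\Sigma_5\times\mb{A}^1$ centered at $(p,0)$, after which the relative MMP (contracting the old component and flipping if necessary) produces a central fibre with a $T$-singularity of type $\frac{1}{8}(1,3)$ (parameters $d=n=2,\ a=1$ in Theorem \ref{37}), whose minimal model is precisely the Nakayama type $[2;1,1]_+(0,0)$, i.e.\ $X'$. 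Under this procedure $L_1,L_2,C_1,C_2$ limit respectively to $L',C'_1,C'_2,C'_3$, yielding $D'_{\frac{13}{41}}$. For (2), the degeneration in (3) is a $\mb{Q}$-Gorenstein test configuration for $(\Sigma_5,cD_{\frac{13}{41}})$ with central fibre $(X',cD'_{\frac{13}{41}})$; K-semistability of the general fibre forces the generalised Futaki invariant to be $\geq 0$, and by (1) this invariant is a positive multiple of $\beta_{(X',cD'_{\frac{13}{41}})}(L')$, which is linear in $c$, zero at $c=\frac{13}{41}$, and strictly negative for $c>\frac{13}{41}$; hence $c\leq\frac{13}{41}$. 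Alternatively, the same constraint drops out of a direct computation of $\beta_{(\Sigma_5,cD_{\frac{13}{41}})}$ at the $(1,2)$-weighted blow-up over $p$, combined with the valuative criterion of Theorem \ref{32}.

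The main obstacle is the construction in (3): producing the $T$-singularity $\frac{1}{8}(1,3)$ in the central fibre and identifying the precise chain of MMP operations needed to reach the Nakayama model $[2;1,1]_+(0,0)$ with the boundary decomposing as $L'+C'_1+C'_2+C'_3$ requires careful bookkeeping of intersection numbers through several contractions and flips. The Zariski decomposition in (1) is also delicate, since $X'$ carries a non-Gorenstein singularity, so the pseudoeffective threshold of $-K_{X'}-tL'$ and the break points of the decomposition must be tracked with the fractional coefficients coming from the $\frac{1}{8}(1,3)$ point.
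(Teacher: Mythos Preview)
Your overall framework matches the paper's---use Theorem~\ref{71} on the complexity-one pair for (1), then deduce (2) either from (3) or from a direct $\beta$-computation---but the execution of (1) contains a substantive error.

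You claim $L'$ is the unique horizontal divisor. This is incorrect. Any $\mb{G}_m\subset\Aut(X')$ must fix the three distinct points $q_1,q_2,q_3$ on the fiber $l\simeq\mb{P}^1$, hence fixes $l$ pointwise; a Cox-coordinate computation on $\mb{F}_2$ then shows the induced action on the \emph{other} invariant fiber is nontrivial, so $L'$ is vertical. More to the point, after blowing up $q_1,q_2,q_3,q_4$ the proper transform $\tilde l$ is a $(-3)$-curve meeting the $(-3)$-curve $\tilde\sigma$, and in the anti-canonical model $X'$ this $[3,3]$-chain is contracted to the $\tfrac{1}{8}(1,3)$-point $P$. (In particular $\sigma$ is not a divisor on $X'$, contrary to your list.) Thus there is \emph{no} horizontal prime divisor on $X'$ at all: the $\mb{G}_m$-fixed locus is zero-dimensional.

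The paper therefore pins down the wall via condition~(3) of Theorem~\ref{71}, computing $\beta$ with respect to $l$ viewed as a divisor \emph{over} $X'$ with center $P$; this is the valuation induced by the $\mb{G}_m$-action, exactly as in Lemma~\ref{104}. The discrepancies of the $[3,3]$-chain are both $-\tfrac{1}{2}$, so $A_{(X',cD')}(l)=\tfrac{1}{2}$, and a three-segment Zariski decomposition of $-\pi^*K_{X'}-tl$ on the resolution yields $S_{(X',cD')}(l)=\tfrac{41(1-2c)}{30}$, whence $\beta(l)=0$ iff $c=\tfrac{13}{41}$. By contrast $\beta(L')$ must be strictly positive at $c=\tfrac{13}{41}$ (since $L'$ is vertical and the pair is K-polystable there), so your proposed computation cannot locate the wall; the value $A(L')=1-c$ is correct, but solving $\beta(L')=0$ does not give $\tfrac{13}{41}$.

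For (2), the paper does precisely your ``alternative'': it computes $\beta$ on $\Sigma_5$ for the exceptional divisor $E$ of the $(1,2)$-weighted blow-up at $p$, obtaining $A=3-7c$ and $S=\tfrac{32(1-2c)}{15}$, hence $c\le\tfrac{13}{41}$. For (3) the paper gives no details beyond ``similar to previous sections''; your weighted-blow-up-and-MMP sketch is in the right spirit.
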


\begin{proof}
    The proof is similar to those statements in previous sections. For the reader's convenience, we put the computation of $\beta$-invariants here.

    Let $E$ be the exceptional divisor of the $(1,2)$-weighted blow-up of $\Sigma_5$ at $p$. Then we have that $A_{(\Sigma_5,cD_{\frac{13}{41}})}(E)=3-7c$ and that \begin{equation}
\begin{split}
    S_{(\Sigma_5,cD_{\frac{13}{41}})}(E)&=\frac{1-2c}{5}\int_0^{7/2}\vol(-K_{\Sigma_5}-tF)dt\\
    &=\frac{1-2c}{5}\left(\int_0^{2}5-\frac{t^2}{2}dt+\int_2^{3}3-\frac{(t-2)^2}{3}dt+\int_3^{7/2}\frac{8t^2-32t+50}{3}dt\right)\\
    &=\frac{32(1-2c)}{15}.
\end{split}
\end{equation} Thus $\beta_{(\Sigma_5,cD_{\frac{13}{41}})}(E)\geq0$ imposes the condition $c\leq 13/41$. 

Analogously, let $l$ be the fiber of $\mb{F}_2$ on which we blow up three points. We have that $A_{(X',cD'_{\frac{13}{41}})}(l)=1/2$ and that  \begin{equation}\nonumber
\begin{split}
    S_{(X',cD'_{\frac{13}{41}})}(l)&=\frac{1-2c}{5}\int_0^{7/2}\vol(-K_{X'}-tl)dt\\
    &=\frac{1-2c}{5}\left(\int_0^{1/2}5-\frac{8t^2}{3}dt+\int_{1/2}^{3/2}\frac{t^2-9t+69/4}{3}dt+\int_{3/2}^{7/2}\frac{(7-2t)^2}{8}dt\right)\\
    &=\frac{41(1-2c)}{30}.
\end{split}
\end{equation} so that $\beta_{(X'.cD'_{\frac{13}{41}})}(l)=0$ if and only if $c=\frac{13}{41}$.
\end{proof}

\begin{prop}\label{116}
    Let $0<c<\frac{1}{2}$ be a rational number, and $C\in|-2K_{X'}|$ be an effective divisor.
    \begin{enumerate}
        \item If $C$ contains the $\frac{1}{8}(1,3)$-singularity $P$ of $X'$, then $(X',cC)$ is K-unstable for any $0<c<\frac{1}{2}$.
        \item If $C$ does not contain $P$, then $(X',\frac{13}{41}C)$ is K-semistable. 
    \end{enumerate}
\end{prop}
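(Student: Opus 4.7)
The plan is to follow the strategies already deployed for analogous statements in Sections 3--5, most directly Proposition \ref{114} for the $X_{1,2}$ case and Theorem \ref{3}(3) for the other trigonal wall. Part (1) is a single well-chosen $\beta$-invariant computation, and part (2) is a $\mb{G}_m$-degeneration plus openness of K-semistability.

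For part (1), I will exhibit a prime divisor $F$ over $X'$ such that $\beta_{(X', cC)}(F) < 0$ throughout the interval $(0, 1/2)$. The natural candidate is the exceptional divisor of the $(1,3)$-weighted blow-up at the $\frac{1}{8}(1,3)$ singularity $P$, which satisfies $A_{X'}(F) = \frac{1+3}{8} = \frac{1}{2}$. Since $-2K_{X'}$ is Cartier at $P$ (the class of $-2K$ is trivial in the local divisor class group at $P$), the hypothesis $P \in C$ forces $\ord_F(C) \geq 1$, giving $A_{(X',cC)}(F) \leq \frac{1}{2} - c$, while $S_{(X',cC)}(F) = (1-2c)\,S_{X'}(F)$. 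A direct Zariski decomposition of $-K_{X'} - tF$ (carried out along the same lines as the computation of $S_{(X',cD'_{\frac{13}{41}})}(l)$ in the preceding proposition) yields $S_{X'}(F) > \frac{1}{2}$, and this is precisely the condition ensuring the linear expression $(\frac{1}{2} - c) - (1-2c)\,S_{X'}(F)$ is strictly negative on $(0, 1/2)$: it is negative at $c = 0$ because $S_{X'}(F) > \frac{1}{2}$, increasing in $c$ with slope $2 S_{X'}(F) - 1 > 0$, and vanishes only at the endpoint $c = \frac{1}{2}$.

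For part (2), I will construct a special degeneration of $(X', cC)$ to $(X', c D'_{\frac{13}{41}})$ using the one-parameter subgroup $\lambda$ on $X'$ inherited from the base action on $\mb{F}_2$ that fixes the distinguished fiber $l$ and the negative section $\sigma$. Expanding $C$ in $\mb{G}_m$-weighted coordinates on $H^0(X', -2K_{X'})$ and keeping track of which monomials are permitted by $C \in |-2K_{X'}|$ and forbidden by $P \notin C$, the weight-leading limit $\lim_{t \to 0} \lambda(t) \cdot (X', cC)$ lies, up to an $\Aut^0(X')$-translation, in the closure of the $\mb{G}_m$-invariant pair $(X', c D'_{\frac{13}{41}})$. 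Since the latter is K-polystable at $c = \frac{13}{41}$ by the previous proposition, openness of K-semistability (Theorem \ref{35}) gives that $(X', \frac{13}{41} C)$ is K-semistable.

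The main obstacle is the monomial-level bookkeeping in part (2): one must verify that for every effective $C \in |-2K_{X'}|$ with $P \notin C$, the weight-leading term of $C$ under $\lambda$ actually avoids $P$, so that the limit lies in the K-polystable orbit rather than in some boundary stratum where $P$ ends up on the limiting curve. I expect this to be handled by the same case analysis as in the proof of Proposition \ref{114}(5), but it is the one step that requires concrete inspection of the defining equations of $C$.
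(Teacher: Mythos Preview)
Your proposal is correct and follows the same two-step strategy as the paper: a $\beta$-invariant computation for (1) and a $\mb{G}_m$-degeneration plus openness for (2). One simplification you are missing in part (1): the divisor $l$ from the preceding proposition is itself one of the two exceptional curves in the minimal resolution of the $\frac{1}{8}(1,3)$ point $P$ (both of which have $A_{X'}=\tfrac12$ by the $[3,3]$ Hirzebruch--Jung chain), so you can take $F=l$ and reuse the already-computed $S_{X'}(l)=\tfrac{41}{30}>\tfrac12$ rather than carrying out a fresh Zariski decomposition. Your concern about the monomial bookkeeping in (2) is not pursued in the paper either; the point is simply that the $\mb{G}_m$-flow sends any $C$ avoiding $P$ to the invariant divisor $D'_{\frac{13}{41}}$, exactly as in Theorem~\ref{3}(3).
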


\begin{proof}
    To show (1), one computes the $\beta$-invariant of the pair with respect to the exceptional divisor over $P$. For (2), it suffices to notice that if $P\notin C$, then the $\mb{G}_m$-action on $X'$ induces a special degeneration of $(X',\frac{13}{41}C)$ to $(X',\frac{13}{41}D'_{\frac{13}{41}})$.
\end{proof}

\begin{remark}\label{69}\textup{
\begin{enumerate}
    \item Across this wall, we recover the genus six trigonal curves which are canonically embedded into $\mb{F}_2$: it is clear that a smooth curve in $|-2K_{X'}|$ is such a curve. Conversely, let $C\subseteq\mb{F}_2$ be a genus six trigonal curve, then its class is $3E+7F$. In particular, $(E.C)=1$ and $(C.F)=3$. Take a general fiber intersecting $C$ at $q_1,q_2,q_3$. Then blowing up $\mb{F}_2$ at $q_1,q_2,q_3$ and $q:=C\cap E$, and contracting the two $(-3)$-curves, one get a pair in $\ove{M}^K\left(\frac{13}{41}+\varepsilon\right)$.
    \item Notice that $\dim \Aut(X')=3$, and these surfaces have a one-dimensional moduli space. Thus the dimension of the exceptional locus of $\ove{M}^K\left(\frac{13}{41}+\varepsilon\right)\rightarrow \ove{M}^K\left(\frac{13}{41}\right)$ is $13$. 
\end{enumerate}
}\end{remark}

As an immediate consequence, there are no other walls contributed by the appearance of pairs, whose surface part is $X'$. However, when constructing $X'$, we assume that at most two of the three points to blow-up on the fiber can collide. Now we deal with the case when the three points degenerate to a length three curvilinear subscheme. In this case, $X'$ has an extra $A_2$-singularity $Q$. A curve $C\in |-2K_{X'}|$ comes from an effective divisor on $\mb{F}_2$ of the class $3e+7f$, and hence a section $\wt{C}\in|\mtc{O}_{\mb{P}(1,1,2)}(7)|$. If $C$ contains $P$, as in Proposition \ref{116}(1), the pair $(X',cC)$ is K-unstable for any $0<c<\frac{1}{2}$. Similarly, if $\mult_Q(C)\geq2$, then $(X',cC)$ is also K-unstable (cf. Proposition \ref{110}(5)). Thus it suffices to consider the case when $P\notin C$ and $\mult_Q(C)\leq1$. 

Let $(x:y:z)$ be the coordinate of $\mb{P}(1,1,2)$. Then a curve $\wt{C}\in|\mtc{O}_{\mb{P}}(7)|$ is given by equation $$f_1(x,y)z^3+f_3(x,y)z^2+f_5(x,y)z+f_7(x,y)=0,$$ where $f_d(x,y)$ is a homogeneous polynomial in $x,y$ of degree $d$. The condition $P\notin C$ requires that $f_1(x,y)\neq0$. We may assume that $f_1(x,y)=x$. We can also assume that the length $3$ subscheme to blow up is supported at $(0:0:1)$, and curvilinear with respect to the line $\{y=0\}$. 

\begin{prop}
    Let $D_2\in|-2K_{X'}|$ be a divisor corresponding to the curve $\{xz(z^2-yx^3)=0\}$ in $|\mtc{O}_{\mb{P}(1,1,2)}(7)|$. 
    \begin{enumerate}
        \item The pair $(X',cD_2)$ is K-polystable if and only if $c=\frac{19}{53}$.
        \item If $C\in|-2K_{X'}|$ is a divisor such that $P\notin C$ and $\mult_Q(C)=1$, then $(X',cC)$ admits a special degeneration to $(X',cD_2)$.
    \end{enumerate}
\end{prop}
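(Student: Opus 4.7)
The plan is to follow the complexity-one $\mathbb{T}$-pair strategy used above for the walls $c=\frac{13}{41}$ and $c=\frac{17}{64}$. The defining polynomial of $\wt{D_2}=\{xz(z^2-yx^3)=0\}$ has only two monomials $xz^3$ and $x^4yz$, both of weight $1$ under the $\mathbb{G}_m$-action on $\mathbb{P}(1,1,2)$ given by $(x:y:z)\mapsto (tx:t^{-3}y:z)$. This action fixes the fiber $\{y=0\}$ together with the curvilinear subscheme supported at $(0:0:1)$ giving rise to $Q$, and stabilizes the configuration contracting to $P$, so it lifts to a $\mathbb{G}_m$-action $\lambda$ on $X'$ and makes $(X',cD_2)$ a complexity-one $\mathbb{T}$-pair.

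For Part (1), we apply Theorem \ref{71}. The key $\mathbb{G}_m$-equivariant horizontal divisor is the strict transform $l$ on $X'$ of the fiber $\{y=0\}$, which passes through both $P$ and $Q$. A piecewise Zariski decomposition of $-K_{X'}-tl$, whose break points correspond to when the negative part successively absorbs the exceptional chains over $P$ and over $Q$, is expected to yield $S_{(X',cD_2)}(l)=\frac{53(1-2c)}{30}$, while $A_{(X',cD_2)}(l)=\frac{1}{2}$. Hence $\beta_{(X',cD_2)}(l)=\frac{1}{2}-\frac{53(1-2c)}{30}=0$ exactly at $c=\frac{19}{53}$. A direct check shows that the remaining vertical equivariant prime divisors (the components of $D_2$ and the exceptional divisors over $P$ and $Q$) each satisfy $\beta>0$ at $c=\frac{19}{53}$, and $\mathrm{Fut}(\lambda)=0$ follows automatically from $\beta(l)=0$ as in the proof of Lemma \ref{104}. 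The same $\beta(l)$-computation rules out K-semistability at any other value of $c$, proving Part (1).

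For Part (2), write the defining polynomial of $\wt{C}\in|\mathcal{O}_{\mathbb{P}(1,1,2)}(7)|$ as
\begin{equation*}
xz^3+f_3(x,y)z^2+f_5(x,y)z+f_7(x,y)=0,
\end{equation*}
having normalized $f_1(x,y)=x$ using $P\notin C$. The hypothesis $\mult_Q(C)=1$, read on the resolution of the length-$3$ curvilinear blow-up at $(0:0:1)$ in the direction $\{y=0\}$, forces the vanishing of every monomial $x^ay^bz^c$ of weight $a-3b<1$, and forces the coefficient of $x^4y$ in $f_5$ to be nonzero. The only weight-$1$ monomials in $|\mathcal{O}(7)|$ are $xz^3$ (from $(a,b,c)=(1,0,3)$) and $x^4yz$ (from $(4,1,1)$), so the flat limit $\lim_{t\to 0}\lambda(t)\cdot\wt{C}$ equals $xz^3+\alpha x^4yz$ for some $\alpha\neq 0$, a curve projectively equivalent to $\wt{D_2}$. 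Combined with Part (1) and openness of K-semistability (Theorem \ref{35}), this yields the desired isotrivial degeneration.

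The principal technical obstacle is the translation of $\mult_Q(C)=1$ into the clean weight condition $a-3b\geq 1$: since $Q$ lives on the resolution of the curvilinear subscheme rather than at a point of $\mathbb{P}(1,1,2)$, one must trace each monomial $x^ay^bz^c$ through the three successive infinitely-near blow-ups at $(0:0:1)$ in the $\{y=0\}$-direction, and check that the resulting contribution to $\mult_Q$ of the strict transform is governed exactly by $a-3b$. A secondary difficulty is the piecewise Zariski-decomposition bookkeeping for $S_{(X',cD_2)}(l)$, which carries an additional break point compared to the earlier computation at $c=\frac{13}{41}$ because $l$ now meets both non-Du Val singularities of $X'$ simultaneously.
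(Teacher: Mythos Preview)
Your overall strategy coincides with the paper's: prove (1) via the complexity-one criterion of Theorem \ref{71} and prove (2) by exhibiting an explicit one-parameter subgroup on $\mathbb{P}(1,1,2)$ that lifts to $X'$ and sends $C$ to $D_2$. The paper is terser --- it only spells out (2), writing the local equation of $\widetilde{C}$ at $(1{:}0{:}0)$ as $ayz+by^2+z^3+cz^2y+\cdots$ with $a\neq 0$ and using the action $y\mapsto t^2y,\ z\mapsto tz$ to degenerate --- but your $\mathbb{G}_m$ with weights $(1,-3,0)$ is the same one-parameter subgroup up to reparametrisation (indeed $(1,-3,0)\equiv -2\cdot(0,2,1)\bmod(1,1,2)$).

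Two points to tighten. First, the length-$3$ curvilinear subscheme giving $Q$ is supported at $(1{:}0{:}0)$, not at the cone point $(0{:}0{:}1)$; the paper's own proof works in the chart $x=1$ and the appearance of $(0{:}0{:}1)$ earlier is a slip. Your ``principal technical obstacle'' is therefore an ordinary affine calculation at a smooth point rather than a computation through the orbifold chart. Second, and relatedly, your weight inequality points the wrong way. In the chart $x=1$ the local equation has lowest $(y,z)$-weight $3$ under $(y,z)\mapsto(t^2y,tz)$, achieved exactly by $yz$ and $z^3$; translated into your global weight $a-3b=7-2(2b+c)$ this says every monomial with nonzero coefficient satisfies $a-3b\le 1$, with equality only for $xz^3$ and $x^4yz$. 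So the flat limit under your action is obtained by keeping the \emph{maximal}-weight terms (equivalently, taking $t\to\infty$ in your parametrisation), not by forcing $a-3b\ge 1$. The paper's assertion that $a\neq 0$ (your ``coefficient of $x^4y$ in $f_5$ is nonzero'') comes from the linear-system condition $C\in|-2K_{X'}|$ itself, which together with $\mathrm{mult}_Q(C)=1$ pins down the normal form $ayz+by^2+z^3+\cdots$ at $(1{:}0{:}0)$. Once these sign and location issues are corrected, your argument is the paper's argument with more detail filled in.
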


\begin{proof}
      We only give the proof for the degeneration, since the remaining part follows from the same argument as before. Consider the local equation of $\wt{C}$ at $(1:0:0)$, which is of the form $$ayz+by^2+z^3+cz^2y+\cdots=0.$$ Notice also that $a\neq0$, otherwise $C$ is not a section in $|-2K_{X'}|$. Then the $\mb{G}_m$-action $\lambda$ given by $y\mapsto t^2y$, $z\mapsto tz$ gives the desired degeneration.
\end{proof}

Similarly, we have the following result giving the other wall $c=\frac{16}{47}$, for which we use the $\mb{G}_m$-action $y\mapsto t^3y$, $z\mapsto tz$ to get the special degeneration.

\begin{prop}
    Let $D_1\in|-2K_{X'}|$ be a divisor corresponding to the curve $\{x(z^3-yx^5)=0\}$ in $|\mtc{O}_{\mb{P}(1,1,2)}(7)|$. 
    \begin{enumerate}
        \item The pair $(X',cD_1)$ is K-polystable if and only if $c=\frac{16}{47}$.
        \item If $C\in|-2K_{X'}|$ is a divisor such that $P,Q\notin C$, then $(X',cC)$ admits a special degeneration to $(X',cD_1)$, and hence $(X',\frac{16}{47}C)$ is K-semistable. Moreover, the pair $(X',cC)$ is K-unstable if $0<c<\frac{16}{47}$.
    \end{enumerate}
\end{prop}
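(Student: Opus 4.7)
The plan is to mirror the proof of the adjacent wall at $c = \tfrac{19}{53}$: first establish K-polystability of the specific pair $(X', cD_1)$ via the complexity one criterion (Theorem \ref{71}), then use a one-parameter subgroup to specially degenerate a general $C$ satisfying $P, Q \notin C$ to $D_1$, and finally deduce both K-semistability at $c = \tfrac{16}{47}$ and K-instability below the wall.

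For part (1), the curve $D_1 = \{x(z^3 - y x^5) = 0\}$ is invariant under the $\mathbb{G}_m$-action $\lambda \colon (x,y,z) \mapsto (x, t^3 y, t z)$ on $\mathbb{P}(1,1,2)$, which lifts to $X'$ and exhibits $(X', cD_1)$ as a complexity one $\mathbb{T}$-pair. I will verify the three conditions of Theorem \ref{71}. The $\mathbb{G}_m$-equivariant prime divisors on $X'$ consist of a unique horizontal divisor $H$ (for concreteness, the proper transform of the $\mathbb{G}_m$-invariant ruling through the blow-up locus) together with finitely many vertical divisors (the exceptional divisor over the $\tfrac{1}{8}(1,3)$-singularity $P$, the two components of the $A_2$-chain over $Q$, the proper transform of the other ruling, and the component $\{x = 0\} \subset D_1$). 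A Zariski decomposition of $-K_{X'} - tH$ yields
\[
S_{(X', cD_1)}(H) = \tfrac{1 - 2c}{5}\int_0^{\infty} \vol(-K_{X'} - tH)\, dt,
\]
and the equation $\beta_{(X', cD_1)}(H) = 0$ pins down the unique value $c = \tfrac{16}{47}$. At this value each vertical divisor is then checked to give a strictly positive $\beta$-invariant. Finally, $\Fut_{(X', cD_1)}(\lambda) = 0$ at $c = \tfrac{16}{47}$ follows from its identification with $\beta_{(X', cD_1)}(H)$ via \cite[Lemma 3.3, 3.4]{Xu21}, exactly as in the proof of Lemma \ref{104}. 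Putting these together via Theorem \ref{71} yields K-polystability precisely at $c = \tfrac{16}{47}$.

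For part (2), a divisor $C \in |-2K_{X'}|$ with $P, Q \notin C$ has defining equation on $\mathbb{P}(1,1,2)$
\[
F(x, y, z) = x z^3 + f_3(x, y) z^2 + f_5(x, y) z + f_7(x, y),
\]
after the normalization $f_1 = x$ permitted by $P \notin C$. The condition $Q \notin C$ eliminates, in particular, the monomial $x^7$ from $f_7$ (together with the analogous leading monomials $x^5 z$ in $f_5$ and $x^3 z^2$ in $f_3$, verified by a local computation in the exceptional chart near $Q$), ensuring that every monomial of $F$ has $\lambda$-weight at least $3$, with equality attained by $x z^3$ and by $x^6 y \in f_7$. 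Hence $\lim_{t \to 0} \lambda(t) \cdot F = x(z^3 - \alpha\, x^5 y)$ for some $\alpha \neq 0$, which is projectively equivalent to $D_1$. This yields a special degeneration of $(X', cC)$ to $(X', cD_1)$; combined with part (1) and Theorem \ref{35}, this gives K-semistability of $(X', \tfrac{16}{47} C)$. For K-instability below the wall, the same special test configuration has central fiber $(X', cD_1)$, whose generalized Futaki invariant equals $\Fut_{(X', cD_1)}(\lambda)$, which in turn equals (up to a positive multiple) $\beta_{(X', cD_1)}(H)$ and is strictly negative for $c < \tfrac{16}{47}$; therefore $(X', cC)$ is K-unstable for $0 < c < \tfrac{16}{47}$.

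The main obstacle is the Zariski-decomposition computation of $\vol(-K_{X'} - tH)$: the surface $X'$ carries both a $\tfrac{1}{8}(1,3)$ singularity and an $A_2$-chain, so one must track several piecewise-linear breakpoints as $t$ increases and successive negative curves enter the negative part. Once the intersection numbers in the Mori cone of $X'$ are pinned down, the integration is routine and matches the constant needed for the wall value $\tfrac{16}{47}$. A secondary technical subtlety in part (2) is that the translation of $Q \notin C$ into explicit vanishing conditions on $f_3, f_5, f_7$ must be performed locally near $Q$ on $X'$, rather than globally on $\mathbb{P}(1,1,2)$, since $Q$ is not a point of $\mathbb{P}(1,1,2)$ but only appears after the resolution.
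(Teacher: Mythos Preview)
Your proposal is essentially correct and follows the same approach as the paper: part (1) via the complexity-one criterion (Theorem~\ref{71}), and part (2) via the special degeneration induced by the $\mathbb{G}_m$-action $y\mapsto t^3y$, $z\mapsto tz$, exactly as the paper indicates.

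One attribution is inverted, however. The vanishing of the monomials $x^7$, $x^5z$, $x^3z^2$ is \emph{not} a consequence of $Q\notin C$; it is already forced by $C\in|-2K_{X'}|$, i.e.\ by the requirement that $\widetilde{C}$ contain the length-$3$ curvilinear subscheme along the fiber. Compare the proof of the $\tfrac{19}{53}$ proposition, where the local equation at $(1{:}0{:}0)$ is $ayz+by^2+z^3+cz^2y+\cdots$: there the constant, $z$, $z^2$ \emph{and} $y$ terms are all absent even though $\mult_Q(C)=1$. The genuine role of the hypothesis $Q\notin C$ is to guarantee that the coefficient $\alpha$ of $x^6y$ is \emph{nonzero}; without this the $\lambda$-limit would collapse to $xz^3$ rather than to $D_1$. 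You assert $\alpha\neq 0$ but do not justify it, and your stated reason for the other vanishings would not supply it.

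For the K-instability when $0<c<\tfrac{16}{47}$, your argument via the Futaki invariant of the special test configuration is valid. The paper's implicit route (paralleling earlier cases such as Proposition~\ref{116}) is instead to compute the $\beta$-invariant with respect to an exceptional divisor over $Q$, which depends only on the surface $X'$ and the condition $Q\notin C$, giving a uniform destabilizing divisor for all such $C$ below the wall. Both arguments reach the same conclusion.
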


\subsection{Restriction on the surfaces in the K-moduli}

In the end of this section, we explain that the surfaces of Gorenstein index $2$ in the class (iv), (v) and (vi) in Theorem \ref{40} will not appear in the K-moduli spaces $\ove{M}^K(c)$ for any $0<c<\frac{1}{2}$. As the surfaces of type (v) and (vi) degenerate to some type (iv) surfaces, then by openness of K-stability, we only need to illustrate that the type (iv) ones do not appear.

Suppose $(X,cD)$ is a K-semistable pair in the moduli stack for some $0<c<1/2$, where $X$ is a surface obtained from the blow-up of $\mb{F}_2$ along two points on a fiber $f_0$ outside the negative section $e$, and a length $2$ subscheme supported on the intersection of $f_0\cap e$. In this case, $X$ acquires a $\frac{1}{12}(1,5)$-singularity $P$.

\begin{lemma}\label{118}
    The boundary divisor $D$ must contain the singularity $P$.
\end{lemma}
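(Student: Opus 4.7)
The plan is to argue by contradiction: assume $P \notin \mathrm{Supp}(D)$, and derive a contradiction to the K-semistability of $(X,cD)$ by combining the local--global volume inequality of Theorem \ref{34} with the smooth index cover estimate of \cite[Theorem 1.3]{XZ21}. This is the same strategy employed in Proposition \ref{31}, equation \eqref{38}.

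First, observe that the $\frac{1}{12}(1,5)$-singularity at $P$ is $\mathbb{Q}$-Gorenstein smoothable of type $\frac{1}{dn^2}(1,dna-1)$ with $d=3$, $n=2$, $a=1$, so $dn^2=12$ and the smooth index cover $\phi\colon(\tilde P,\tilde X)\to(P,X)$ has degree $12$. Since $P\notin\mathrm{Supp}(D)$, the pulled-back divisor $\phi^*D$ does not pass through $\tilde P$, hence $\mathrm{ord}_{\tilde P}(\phi^*D)=0$. By \cite[Theorem 1.3]{XZ21},
\[
12\cdot\widehat{\mathrm{vol}}(P,X,cD)=\widehat{\mathrm{vol}}(\tilde P,\tilde X,c\phi^*D)\leq\bigl(2-c\cdot\mathrm{ord}_{\tilde P}\phi^*D\bigr)^2=4,
\]
so $\widehat{\mathrm{vol}}(P,X,cD)\leq\tfrac{1}{3}$. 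Combining with Theorem \ref{34} applied at $P$, and using that $-K_X-cD\equiv-(1-2c)K_X$ with $(-K_X)^2=5$,
\[
5(1-2c)^2=(-K_X-cD)^2\leq\tfrac{9}{4}\widehat{\mathrm{vol}}(P,X,cD)\leq\tfrac{3}{4},
\]
which forces $(1-2c)^2\leq\tfrac{3}{20}$. This rules out K-semistability whenever $c<\tfrac{10-\sqrt{15}}{20}$.

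To cover the remaining range $c\in\bigl[\tfrac{10-\sqrt{15}}{20},\tfrac{1}{2}\bigr)$, the plan is to supplement the volume inequality with a direct $\beta$-invariant computation. A natural choice is the divisorial valuation $v$ over $P$ realizing the minimum $\widehat{\mathrm{vol}}(P,X)=\tfrac{1}{3}$, namely the descent to $X$ of the weight-$(1,1)$ toric valuation on the smooth cover, with log discrepancy $A_X(v)=\tfrac{1}{6}$. Since $P\notin\mathrm{Supp}(D)$ gives $A_{(X,cD)}(v)=A_X(v)=\tfrac{1}{6}$, and since $-K_X-cD\equiv-(1-2c)K_X$ gives $S_{(X,cD)}(v)=(1-2c)S_X(v)$, one reduces to computing $S_X(v)$ via a Zariski decomposition of $\pi^*(-K_X)-tv$ on the minimal resolution $Y\to X$. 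The intersection theory is governed by the chain $\tilde f_0-\tilde G_1-\tilde e$ with self-intersections $(-3,-2,-3)$ contracting to $P$, together with the remaining exceptional curves $F_1,F_2,G_2$ of the blow-up $Y\to\mathbb{F}_2$; this computation should yield $S_X(v)$ large enough that $\beta_{(X,cD)}(v)<0$ throughout the remaining range.

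The main obstacle is the explicit $S_X(v)$ computation. Unlike the simpler cases treated in Sections \ref{107} and \ref{108}, where the $\mathbb{G}_m$-action reduced everything to a single $\mathbb{G}_m$-invariant divisor, here one must track the Zariski decomposition of $\pi^*(-K_X)-tv$ as $t$ crosses multiple pseudo-effective thresholds determined by the interacting self-intersections in the $(-3,-2,-3)$ chain. Once this is done, combining the two ranges of $c$ yields the full conclusion that $P$ must lie in $\mathrm{Supp}(D)$.
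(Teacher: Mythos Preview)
Your approach is fundamentally different from the paper's, and the proposed second step cannot close the argument.

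The paper does not invoke K-semistability at all. It gives a direct linear-system argument that $P$ is in the base locus of $|-2K_X|$: tracking a member $D$ back through the birational correspondence $X\dashrightarrow\mathbb{F}_2$ (via $\mathbb{P}(1,1,2)$), one finds that the corresponding curve $\widehat D\in|3e+7f|$ on $\mathbb{F}_2$ is forced, by its local equation at $f_0\cap e$, to be tangent to the fiber $f_0$ there. Since the length-$2$ scheme $\Delta$ used in constructing $X$ points in a direction transverse to $f_0$, the proper transform of $\widehat D$ on the resolution $Y$ inevitably meets the $(-3,-2,-3)$ chain that contracts to $P$. Thus every $D\in|-2K_X|$ passes through $P$; the hypothesis ``$P\notin D$'' in your contrapositive is simply empty.

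Your plan instead assumes $P\notin D$ and tries to destabilize via valuations centered at $P$. The volume step is fine and rules out $c<\tfrac{10-\sqrt{15}}{20}$. But the $\beta$-step cannot work for $c$ near $\tfrac12$. If $P\notin D$, then for \emph{any} valuation $v$ centered at $P$ one has $A_{(X,cD)}(v)=A_X(v)$ and $S_{(X,cD)}(v)=(1-2c)\,S_X(v)$, hence
\[
\beta_{(X,cD)}(v)=A_X(v)-(1-2c)\,S_X(v)\ \longrightarrow\ A_X(v)>0\qquad\text{as }c\to\tfrac12^-,
\]
so no choice of $v$ over $P$ yields $\beta<0$ once $1-2c$ is small; the hoped-for ``$S_X(v)$ large enough'' is impossible in that range. (Incidentally, the weight-$(1,1)$ valuation over a $\tfrac{1}{12}(1,5)$ point has log discrepancy $\tfrac12$, not $\tfrac16$: all three exceptional curves in the $(-3,-2,-3)$ resolution have discrepancy $-\tfrac12$, and this is the minimal log discrepancy.) The paper's purely geometric base-point argument avoids this obstruction entirely, after which the Corollary (no K-semistable pair on this $X$) follows from the index estimate in Proposition~\ref{31}.
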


\begin{proof}
    As in Section \ref{117}, the divisor $D\in |-2K_{X}|$ corresponds to a curve in on $|\mtc{O}_{\mb{P}(1,1,2)}(7)|$ defined by the polynomial $xz^3+f_3(x,y)z^2+f_5(x,y)z+f_7(x,y)$. Locally at $(0:0:1)$, the polynomial is of the form $x+f(x,y)$, where $\deg f\geq 3$. By taking the blow-up of $\mb{P}(1,1,2)$ at the cone point, we see that the local defining equation of the proper transform, denoted by $\widehat{D}$, of $\wt{D}$ on $\mb{F}_2$ is $u+ay^2+g_{\geq3}(u,y)$, where the negative section is $\{y=0\}$ and the fiber is $\{u=0\}$. Thus $\widehat{D}$ is tangent to the fiber, and hence does not pass through the tangent vector at $e\cap f_0$ we blow-up.
\end{proof}

\begin{corollary}
    The pair $(X,cD)$ is not K-semistable for any $0<c<\frac{1}{2}$.
\end{corollary}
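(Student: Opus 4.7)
The plan is to mimic the argument of Proposition~\ref{31}, combining Theorem~\ref{34} with a smooth Galois cover at $P$ and using the extra information on $D$ supplied by Lemma~\ref{118}.

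First I would set up the cover. Since $P\in X$ is a $\frac{1}{12}(1,5)$-singularity, there is a smooth Galois cover $(\tilde{P},\tilde{X})=(0,\mb{A}^{2})\to (P,X)$ of degree $12$, with the $\mu_{12}$-action $\zeta\cdot(u,v)=(\zeta u,\zeta^{5}v)$. Let $\tilde{D}$ denote the pullback of $D$ to $\tilde{X}$. By Lemma~\ref{118} we have $P\in D$, so $\tilde{D}$ vanishes at $\tilde{P}$. A local defining equation of $\tilde{D}$ is $\mu_{12}$-invariant, hence every monomial $u^{i}v^{j}$ appearing in it satisfies $i+5j\equiv 0\pmod{12}$. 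Among invariant monomials with $(i,j)\neq(0,0)$, the smallest total degree $i+j$ is $4$, realised by $u^{2}v^{2}$. Therefore $\ord_{\tilde{P}}\tilde{D}\geq 4$.

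Next I would plug this into the inequality chain used in the proof of Proposition~\ref{31}. Theorem~\ref{34} applied to the K-semistable pair $(X,cD)$ gives
$$5(1-2c)^{2}=(-K_{X}-cD)^{2}\leq \tfrac{9}{4}\wh{\vol}(P,X,cD).$$
The multiplicativity of $\wh{\vol}$ under smooth Galois covers (\cite[Theorem 1.3]{XZ21}), together with the elementary bound $\wh{\vol}(\tilde{P},\tilde{X},c\tilde{D})\leq (2-c\cdot \ord_{\tilde{P}}\tilde{D})^{2}$ valid for any smooth surface germ, then yields
$$12\cdot\tfrac{4}{9}\cdot 5(1-2c)^{2}\leq 12\,\wh{\vol}(P,X,cD)=\wh{\vol}(\tilde{P},\tilde{X},c\tilde{D})\leq (2-4c)^{2}=4(1-2c)^{2}.$$
Dividing by $(1-2c)^{2}>0$ forces $240/9\leq 4$, a contradiction. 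Hence $(X,cD)$ is K-unstable for every $0<c<\frac{1}{2}$.

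The only step that really needs checking is the combinatorial lower bound $\ord_{\tilde{P}}\tilde{D}\geq 4$ on the multiplicity of an invariant equation through the origin; everything else is a direct re-application of the template developed for Proposition~\ref{31}, so I do not anticipate any new technical obstacle.
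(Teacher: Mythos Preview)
Your argument is correct and is exactly what the paper's one-line proof (``It follows from Lemma~\ref{118} and the proof of Proposition~\ref{31}'') intends: apply the covering trick of Proposition~\ref{31} at the $\tfrac{1}{12}(1,5)$-point, use Lemma~\ref{118} plus the weight condition to force $\ord_{\tilde P}\tilde D\ge 4$, and then run the inequality chain (\ref{38}) with $dn^{2}=12$. One small remark: the $\mu_{12}$-invariance of a local equation for $\tilde D$ is justified because $D\sim -2K_X$ is Cartier at $P$ (the $\tfrac{1}{12}(1,5)$-singularity has Gorenstein index $2$), and this makes your congruence $i+5j\equiv 0\pmod{12}$ agree with the congruence $i+(dna-1)j\equiv 2dna\pmod{dn^{2}}$ derived in the proof of Proposition~\ref{31}.
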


\begin{proof}
    It follows from Lemma \ref{118} and the proof of Proposition \ref{31}.
\end{proof}

Recall that in Section \ref{102}, we list the classification of quintic $\mb{Q}$-Fano surfaces of Gorenstein index $3$. We also prove in Proposition \ref{31} that the only possible index three singularity for the surface appearing in the K-moduli space is of type $\frac{1}{9}(1,2)$. We now show that in fact for any K-semistable quintic log del Pezzo pair $(X,cD)$, the Cartier index of $X$ is at most two.

\begin{prop}\label{33}
Let $\wt{X}$ be the blow-up of the Hirzebruch surface $\mb{F}_2$ along five general points on a fiber, and $\wt{X}\rightarrow X$ be the contraction of the two negative curves. Then $(X,cD)$ is not contained in $\ove{M}^K(c)$ for any effective $\mb{Q}$-Cartier Weil divisor $D\sim_{\mb{Q}}-2K_X$ and any $0<c<1/2$.
\end{prop}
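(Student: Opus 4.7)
The plan is to show that for any effective $\mb{Q}$-Cartier Weil divisor $D \sim_{\mb{Q}} -2K_X$ and any $c \in (0, 1/2)$, at least one of the $\beta$-invariants $\beta_{(X, cD)}(\wt{F}_0)$ or $\beta_{(X, cD)}(\wt{E})$ is strictly negative; here $\wt{F}_0$ is the strict transform on $\wt{X}$ of the blown-up fiber (with $\wt{F}_0^2 = -5$) and $\wt{E}$ is the strict transform of the negative section of $\mb{F}_2$ (with $\wt{E}^2 = -2$), and together they form the Hirzebruch--Jung exceptional chain (with $\wt{F}_0 \cdot \wt{E} = 1$) over the $\tfrac{1}{9}(1,2)$-singularity $P$ of $X$.

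First I would do the local discrepancy and $S$-invariant computations on $\wt{X}$, relative to the contraction $\mu \colon \wt{X} \to X$. Hirzebruch--Jung adjunction gives $\mu^*(-K_X) = -K_{\wt{X}} - \tfrac{2}{3}\wt{F}_0 - \tfrac{1}{3}\wt{E}$, so $A_X(\wt{F}_0) = 1/3$ and $A_X(\wt{E}) = 2/3$. Running Zariski decomposition of $\mu^*(-K_X) - t\wt{F}_0$ on $\wt{X}$ (there are two chambers, first with $\wt{E}$ in the negative part for $0 \leq t \leq 1/3$, then additionally the $(-1)$-curves $E_1, \dots, E_5$ for $1/3 \leq t \leq 10/3$), one obtains $S_X(\wt{F}_0) = 11/9$, and similarly $S_X(\wt{E}) = 10/9$. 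Because $D \sim_{\mb{Q}} -2K_X$ gives $-K_X - cD \sim_{\mb{Q}} (1-2c)(-K_X)$, the relation $S_{(X, cD)}(F) = (1 - 2c)S_X(F)$ yields
\[
\beta_{(X, cD)}(\wt{F}_0) = \frac{22c - 8}{9} - c\alpha, \qquad \beta_{(X, cD)}(\wt{E}) = \frac{20c - 4}{9} - c\delta,
\]
where $\alpha := \ord_{\wt{F}_0}(\mu^*D)$ and $\delta := \ord_{\wt{E}}(\mu^*D)$.

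Next I would parametrize $(\alpha, \delta)$ by intersection theory. Writing $\mu^*D = \wt{D}_p + \alpha\wt{F}_0 + \delta\wt{E}$ with $\wt{D}_p$ the strict transform, the vanishing conditions $\mu^*D \cdot \wt{F}_0 = 0 = \mu^*D \cdot \wt{E}$ give $\alpha = (2a + b)/9$ and $\delta = (a + 5b)/9$, where $a = \wt{D}_p \cdot \wt{F}_0 \in \mb{Z}_{\geq 0}$ and $b = \wt{D}_p \cdot \wt{E} \in \mb{Z}_{\geq 0}$. The key extra constraint comes from intersecting with a generic fiber $\wt{F}$ of the fibration $\wt{X} \to \mb{P}^1$: using $\mu^*D \sim_{\mb{Q}} -2K_{\wt{X}} - \tfrac{4}{3}\wt{F}_0 - \tfrac{2}{3}\wt{E}$ and the numbers $-2K_{\wt{X}} \cdot \wt{F} = 4$, $\wt{F}_0 \cdot \wt{F} = 0$, $\wt{E} \cdot \wt{F} = 1$, one computes $\mu^*D \cdot \wt{F} = 10/3$, hence $\wt{D}_p \cdot \wt{F} = 10/3 - \delta$. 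Since this intersection must be a non-negative integer on the smooth surface $\wt{X}$, the value of $\delta$ is restricted to the four-element set $\{1/3, 4/3, 7/3, 10/3\}$, equivalently $a + 5b \in \{3, 12, 21, 30\}$.

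Finally I would enumerate the admissible $(a, b)$ pairs and substitute into the $\beta$-invariant formulas. For instance, $\delta = 1/3$ forces $(a, b) = (3, 0)$, whence $\alpha = 2/3$ and $\beta_{(X, cD)}(\wt{F}_0) = (16c - 8)/9 < 0$ for all $c < 1/2$. Every other admissible $(a, b)$ either yields $\alpha \geq 2/3$ (so the same $\wt{F}_0$-bound applies) or corresponds to $\delta \geq 4/3$, in which case $\beta_{(X, cD)}(\wt{E}) = (20c - 4)/9 - c\delta$ is strictly negative throughout $(0, 1/2)$. This forces K-instability of $(X, cD)$ for every $D$ and every $c \in (0, 1/2)$, proving the proposition. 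The main technical point is a careful Zariski decomposition in the first step; the case enumeration in the last step is finite and automatic once the generic-fiber constraint has pinned down $\delta$.
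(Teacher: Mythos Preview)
Your proof is correct and takes a genuinely different route from the paper's. The paper first uses the wall-crossing structure to argue that if $X$ ever appears in $\ove{M}^K(c)$, then at the smallest such $c$ the K-polystable boundary $D$ must be $\mb{G}_m$-equivariant; it then lists the six combinatorial types of $\mb{G}_m$-invariant $D \sim_{\mb{Q}} -2K_X$ with $(X,cD)$ klt and checks that $A_{(X,cD)}(l) \leq \frac{1-2c}{3}$ in each case (in fact equality holds throughout), which together with the computation $S_{(X,cD)}(l) = \frac{11(1-2c)}{9}$ forces $\beta_{(X,cD)}(l) < 0$. Your approach avoids this reduction entirely: you compute $\beta$ for \emph{both} exceptional curves $\wt{F}_0 = l$ and $\wt{E} = \sigma$, and the crucial new idea is the integrality constraint $\wt{D}_p \cdot \wt{F} \in \mb{Z}_{\geq 0}$ coming from a generic fiber, which pins $\delta$ to one of four values and makes the final case split automatic. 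Your argument is therefore more self-contained---it does not invoke the wall-crossing machinery or the assertion that the first appearance of $X$ must carry a torus-invariant boundary---and it in fact proves the stronger statement that $(X,cD)$ is K-\emph{unstable} (not merely non-K-polystable) for every effective $D$. The paper's approach, by contrast, stays within the complexity-one methodology used throughout and economizes by testing only the single divisor $l$.
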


\begin{proof}
Consider the blow-up morphism $\wt{X}\rightarrow \mb{F}_2$. Let $l$ be the proper transform on $\wt{X}$ of the fiber passing through the five points, $E_i$ be the exceptional divisors over the five points $p_i$, and $\sigma$ be the negative section on $\wt{X}$. Notice that $X$ has a $\mb{G}_m$-action. If $(X,cD)\in \ove{M}^K(c)$ and $c$ is the smallest number for which the surfaces $X$ appears, then $D$ is a $\mb{G}_m$-equivariant Weil divisor. 

We have that $\Cl(X)\simeq \oplus_{i=1}^5\mb{Z}\cdot F_i$, where $F_i$ is the image of $E_i$ on $X$. Denote by $E$ and $F$ the class of a fiber and the negative section on $\mb{F}_2$. Then all the $\mb{G}_m$-equivariant prime divisors on $X$ are
\begin{enumerate}[(i)]
    \item $F_i$, where $i=1,...,5$;
    \item proper transform of a fiber, denoted by $l'$; 
    \item proper transforms of sections in the class $E+2F$ which do not pass through $p_i$, denoted by $\sigma'$;
    \item proper transforms of sections in the class $E+2F$ which pass through $p_i$, denoted by $\sigma_i$.
\end{enumerate}
We have that $l'\sim \sum F_i$, $\sigma'\sim 2\sum F_k$, and that $\sigma_i\sim (2\sum F_k)-F_i$. Denote by $\pi:\wt{X}\rightarrow X$ the contraction morphism. Then we have that $$K_{\wt{X}}\sim \pi^{*}K_X-\frac{2}{3}l-\frac{1}{3}\sigma,$$ and $$\pi^*(l')=\wt{l'}+\frac{5}{9}\sigma+\frac{1}{9}l,\quad \pi^*(F_i)=E_i+\frac{1}{9}\sigma+\frac{2}{9}l,\quad \pi^*(\sigma')=\wt{\sigma'}+\frac{1}{9}\sigma+\frac{2}{9}l, \quad \pi^{*}(\sigma_i)=\wt{\sigma_i}.$$ If $D$ is a $\mb{G}_m$-equivariant Weil divisor which is $\mb{Q}$-linearly equivalent to $-2K_X$ such that $(X,cD)$ has klt singularity, then $D$ is of one of the following type:
\begin{enumerate}[(i)]
    \item $2\sigma'+2l'$;
    \item $2\sigma'+\sigma_i+F_i$;
    \item $\sigma'+2l'+\sigma_i+F_i$;
    \item $\sigma'+\sigma_i+F_i+\sigma_k+F_k$, where we allow $i=k$;
    \item $2l'+\sigma_i+F_i+\sigma_k+F_k$, where we allow $i=k$;
    \item $\sigma_i+F_i+\sigma_j+F_j+\sigma_k+F_k$, where we allow two of $i,j,k$ coincide.
\end{enumerate}

It follows that $A_{(X,cD)}(l)\leq \frac{1-2c}{3}$. On the other hand, we compute that 
\begin{equation}
\begin{split}
    S_{(X,cD)}(l)&=\frac{1-2c}{5}\int_0^{10/3}\vol(-\pi^{*}K_X-tl)dt\\
    &=\frac{1-2c}{5}\left(\int_0^{1/3}5-\frac{9t^2}{2}dt+\int_{1/3}^{10/3}\frac{(10/3-t)^2}{2}dt\right)=\frac{11(1-2c)}{9}.
\end{split}
\end{equation} As a consequence, the pair $(X,cD)$ is never K-semistable when $0<c<1/2$ by Theorem \ref{32}.

\end{proof}

\begin{corollary}\label{56}
If $(X,cD)\in \mtc{M}^K(c)$ is a K-semistable quintic log del Pezzo pair, then the Cartier index of $X$ is at most $2$.
\end{corollary}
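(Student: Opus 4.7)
The plan is to combine Proposition~\ref{31} with the classification recalled at the end of Section~\ref{102}, and with the Zariski-decomposition computation carried out in Proposition~\ref{33}. Suppose for contradiction that $(X, cD) \in \mtc{M}^K(c)$ is K-semistable while $X$ has Cartier index at least $3$ at some point $x$. Proposition~\ref{31} forces $\mathrm{Ind}(x, K_X) = 3$ and $(x \in X)$ to be locally a $\frac{1}{9}(1,2)$-singularity. The classification of index-three quintic del Pezzo surfaces of \cite[Table 7]{FY17} recalled in Section~\ref{102} then places $X$ in one of the two families $[2,1;2]_{1G}$ (blow-up of $\mb{F}_2$ at five points on a fiber, followed by contraction of the two $(-2)$-chains) or $[2,2;1]_{1A}$ (blow-up of $\mb{F}_2$ at two points on a fiber and three on the negative section, followed by contraction of the relevant negative chain).

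The family $[2,1;2]_{1G}$ with five general points is handled directly by Proposition~\ref{33}, which rules out K-semistability for \emph{every} $\mb{G}_m$-equivariant effective $D \Qequiv -2K_X$ and every $c \in (0, 1/2)$. For a non-generic configuration $X_0$ in the same family, I would invoke openness of K-semistability (Theorem~\ref{35}): a hypothetical K-semistable pair $(X_0, cD_0)$ admits a $\mb{G}_m$-equivariant deformation to the generic stratum, producing a K-semistable pair there and contradicting Proposition~\ref{33}.

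The remaining task is to dispose of $[2,2;1]_{1A}$, for which I would run the same template as in Proposition~\ref{33}. Working on the minimal resolution $\wt{X} \to X$, the plan is to enumerate the $\mb{G}_m$-equivariant prime divisors on $X$ and the resulting short list of $\mb{G}_m$-equivariant effective Weil divisors $D \Qequiv -2K_X$ giving klt pairs, then test against the proper transform $l$ of the distinguished fiber (or the negative section) by bounding $A_{(X, cD)}(l)$ from above from the enumeration and computing $S_{(X, cD)}(l)$ exactly via a piecewise Zariski decomposition of $-\pi^{\ast} K_X - t\, l$ on $\wt{X}$. The expected conclusion, as in Proposition~\ref{33}, is $\beta_{(X, cD)}(l) < 0$ uniformly for all $c \in (0, 1/2)$ and all admissible $D$, which together with the previous paragraph yields the corollary. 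The main obstacle is the bookkeeping for the $[2,2;1]_{1A}$ geometry (equivariant generators of the class group, the pullback coefficients at the $\frac{1}{9}(1,2)$-singularity, and the breakpoints of the Zariski decomposition); no new conceptual ingredients beyond those already appearing in the proof of Proposition~\ref{33} should be required.
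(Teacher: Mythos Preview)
Your approach is essentially the paper's: combine Proposition~\ref{31}, the index-three classification recalled in Section~\ref{102}, Proposition~\ref{33} for the generic $[2,1;2]_{1G}$ surface, and openness (Theorem~\ref{35}) for its specializations---the paper states Corollary~\ref{56} as an immediate consequence of these ingredients and does not write out a separate computation for $[2,2;1]_{1A}$, so your proposal is if anything a more careful completion. One small correction: in your openness step the deformation need only be a $\mb{Q}$-Gorenstein family of log Fano pairs (so that Theorem~\ref{35} applies), not $\mb{G}_m$-equivariant; the $\mb{G}_m$-equivariance in Proposition~\ref{33} enters for a different reason, namely to reduce the list of candidate boundary divisors at the first wall where the surface appears.
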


\section{The last divisorial contraction: plane quintic curves}

In this section, we study the last divisorial contraction $\ove{M}^K(c_i+\varepsilon)\rightarrow \ove{M}^K(c_i)$ occurring at the wall $c_i=\frac{1}{4}$. In particular, a general K-polystable pair $(X,cC)\in\ove{M}^K(c)$ on the exceptional divisor has boundary divisor $C$ a smooth plane quintic curve, which is of genus six.

\subsection{The wall $c=\frac{1}{4}$}\label{201}

Let $X$ be an ADE quintic del Pezzo surface obtained by blowing up $\mb{P}^2$ along $4$ points $p_1,...,p_4$ (not necessarily distinct). Let $q\in \mb{P}^2$ be a general point, $L_i$ be the line connecting $p_1$ and $q$, and $Q$ be the conic passing through $p_1,...,p_4,q$. Let $D_{\frac{1}{4}}\in|-2K_X|$ be the divisor obtained from the plane sextic curve $Q+L_1+\cdots+L_4$.

\begin{lemma}\label{81}
If the pair $(X,cD_{\frac{1}{4}})$ is K-semistable, then $0<c\leq\frac{1}{4}$
\end{lemma}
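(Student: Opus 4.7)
The plan is to apply the valuative criterion (Theorem \ref{32}) by computing $\beta_{(X,cD_{1/4})}(E_q)$, where $E_q$ is the exceptional divisor of the blow-up $\pi\colon\wt{X}\to X$ at the general point $q$. Since $q$ is a smooth point of $X$ contained in each of the five components $Q,L_1,\ldots,L_4$ with local multiplicity one, we have $\mult_q D_{1/4}=5$, whence
\[
A_{(X,cD_{1/4})}(E_q)=2-5c.
\]
Thus it suffices to compute $S_{(X,cD_{1/4})}(E_q)$ explicitly and read off the inequality from $\beta\ge 0$.

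For the $S$-invariant I would perform a Zariski decomposition of $\pi^{*}(-K_X)-tE_q$ on $\wt{X}$. The key geometric input is that the proper transforms $\wt{Q},\wt{L}_1,\ldots,\wt{L}_4$ are five pairwise disjoint $(-1)$-curves on $\wt{X}$, each meeting $E_q$ transversally at a single point (on $X$ they intersect pairwise only at $q$, by genericity of $q$). Each has $(-K_X)$-degree $2$, so $(\pi^{*}(-K_X)-tE_q)\cdot\wt{C}=2-t$ for $\wt{C}$ any one of them; combined with $(\pi^{*}(-K_X)-tE_q)\cdot E_q=t$, this shows that the nef threshold is $t=2$ and that $\vol(\pi^{*}(-K_X)-tE_q)=5-t^{2}$ on $[0,2]$. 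For $t\in[2,5/2]$, the symmetry among the five curves forces the negative part to be $N(t)=(t-2)\bigl(\wt{Q}+\sum_i\wt{L}_i\bigr)$, which gives $P(t)^{2}=(5-2t)^{2}$, and the pseudo-effective threshold is $\tau=5/2$.

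Integrating the volume function,
\[
\int_{0}^{5/2}\vol(\pi^{*}(-K_X)-tE_q)\,dt=\int_{0}^{2}(5-t^{2})\,dt+\int_{2}^{5/2}(5-2t)^{2}\,dt=\frac{22}{3}+\frac{1}{6}=\frac{15}{2},
\]
so $S_{(X,cD_{1/4})}(E_q)=\frac{1-2c}{5}\cdot\frac{15}{2}=\frac{3(1-2c)}{2}$, and hence
\[
\beta_{(X,cD_{1/4})}(E_q)=(2-5c)-\frac{3(1-2c)}{2}=\frac{1}{2}-2c,
\]
which is non-negative exactly when $c\leq 1/4$. The only step requiring genuine care is verifying the Zariski decomposition on $\wt{X}$: one must rule out any other effective curve class producing a smaller nef or pseudo-effective threshold, which reduces to checking that the five curves $\wt{L}_1,\ldots,\wt{L}_4,\wt{Q}$ attain the minimum of $(-K_X\cdot C)/\mult_q C$ among curves through a general point. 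This follows from the classification of $(-K_X)$-degree two curves on a quintic del Pezzo surface, together with the genericity of $q$; the argument is uniform across the ADE degenerations of $X$ because it only uses the Picard class data of $L_i$ and $Q$ and the fact that $q$ is a smooth point of $X$.
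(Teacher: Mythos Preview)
Your proof is correct and follows exactly the same approach as the paper: compute $\beta$ with respect to the exceptional divisor of the ordinary blow-up at $q$, obtaining $A=2-5c$, $S=\frac{3(1-2c)}{2}$, and hence $\beta=\frac{1}{2}-2c$. Your write-up is in fact more detailed than the paper's, which simply asserts the volume function $5-t^2$ on $[0,2]$ and $(5-2t)^2$ on $[2,5/2]$ without identifying the negative part or justifying nefness; your explicit identification of the five disjoint $(-1)$-curves $\wt L_1,\ldots,\wt L_4,\wt Q$ and the resulting Zariski decomposition is a welcome addition.
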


\begin{proof}

Let $E$ be the exceptional divisor of the blow-up of $X$ at $q$. Then we have that $A_{(X,cD_{\frac{1}{4}})}(E)=2-5c$ and that \begin{equation}
\begin{split}
    S_{(X,cD_{\frac{1}{4}})}(E)&=\frac{1-2c}{5}\int_0^{5/2}\vol(-K_{\Sigma_5}-tE)dt\\
    &=\frac{1-2c}{5}\left(\int_0^{2}5-t^2dt+\int_2^{5/2}(5-2t)^2dt\right)\\
    &=\frac{3(1-2c)}{2}.
\end{split}
\end{equation} Thus $\beta_{(X,cD_{\frac{1}{4}})}(E)\geq0$ imposes the condition $c\leq 1/4$.

\end{proof}

Take a line $l$ in the projective plane and five points $r_1,...,r_5$ on it such that at most two of them can collide. Blow up the $\mb{P}^2$ along these five points and contract the proper transform of $l$, which is a $(-4)$-curve. Here, if $q_i$ and $q_j$ are the same point, then we blow up $\mb{P}^2$ at $q_i$ along the tangent direction of $l$, and blow down the $(-2)$-exceptional curve. Then one gets a $\mb{Q}$-Fano surface $X_{q}$ with a $\frac{1}{4}(1,1)$-singularity $R$. Let $r$ be a point on the $\mb{P}^2$ outside $l$, and $l_i$ be proper transform on $X_q$ of the line $\ove{rr_i}$. Then $D_q:=\sum l_i\in|-2K_{X_q}|$ is an effective Cartier divisor.

\begin{lemma}\label{105}
The log Fano pair $(X_q,cD_q)$ is K-polystable if and only if $c=1/4$.
\end{lemma}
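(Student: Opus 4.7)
The plan is to apply Theorem~\ref{71} to $(X_q, cD_q)$, which is a complexity-one $\mb{T}$-pair. Choosing coordinates on $\mb{P}^2$ with $l = \{z=0\}$ and $r = (0:0:1)$, the scaling $t\cdot(x:y:z) = (x:y:tz)$ fixes each $r_i \in l$, fixes $r$, and preserves every line through $r$; it therefore lifts to the blow-up at $r_1,\dots,r_5$ and descends through the contraction of $\wt{l}$ (the $(-4)$-curve that collapses to the singular point $R$) to a $\mb{G}_m$-action on $X_q$ preserving $D_q$. For generic $r_i$ the maximal torus of $\Aut^0(X_q, D_q)$ is exactly this $\mb{G}_m$, so the pair is of complexity one. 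The $\mb{G}_m$-fixed locus of $\mb{P}^2$ is $l \sqcup \{r\}$, so the only $\mb{G}_m$-equivariant prime divisor (on any equivariant model over $X_q$) carrying a trivial action is $\wt{l}$; every other such divisor — the $E_i$'s and the strict transforms of lines through $r$ — is vertical.

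The key computation is $\beta_{(X_q, cD_q)}(\wt{l})$. Let $\pi\colon Y \to X_q$ be the minimal resolution. Adjunction on the $(-4)$-curve $\wt{l}$ gives $K_Y\cdot\wt{l} = 2$, hence $\pi^* K_{X_q} = K_Y + \tfrac{1}{2}\wt{l}$ and $A_{(X_q, 0)}(\wt{l}) = \tfrac{1}{2}$. The strict transform of each line $\overline{rr_i}$ on $Y$ meets $E_i$ at a point distinct from $E_i \cap \wt{l}$, hence is disjoint from $\wt{l}$; therefore $\ord_{\wt{l}} D_q = 0$ and $A_{(X_q, cD_q)}(\wt{l}) = \tfrac{1}{2}$. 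Writing $-\pi^*K_{X_q} = \tfrac{5}{2}H - \tfrac{1}{2}\sum_{i=1}^5 E_i$ on $Y$, I will compute the Zariski decomposition of
\[
-\pi^*K_{X_q} - t\wt{l} \;=\; (\tfrac{5}{2}-t)H + (t-\tfrac{1}{2})\sum E_i
\]
in two regimes: nef on $[0,\tfrac{1}{2}]$ (checking intersections with $E_i$ and $\wt{l}$), and with negative part $(t-\tfrac{1}{2})\sum E_i$ and positive part $(\tfrac{5}{2}-t)H$ on $[\tfrac{1}{2},\tfrac{5}{2}]$. Integrating the volume yields $S_{X_q}(\wt{l}) = 1$, so
\[
\beta_{(X_q, cD_q)}(\wt{l}) \;=\; \tfrac{1}{2} - (1-2c) \;=\; 2c - \tfrac{1}{2},
\]
which vanishes precisely at $c=\tfrac{1}{4}$. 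Together with \cite[Lemmas 3.3--3.4]{Xu21} (applied as in the proof of Lemma~\ref{104}), this identifies $\Fut_{(X_q, cD_q)}(\lambda) = 0$ with $c = \tfrac{1}{4}$ and dispatches the ``only if'' direction: for $c \neq \tfrac{1}{4}$ the $\mb{G}_m$-action $\lambda$ (or $\lambda^{-1}$) gives a product destabilization.

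For the ``if'' direction, at $c=\tfrac{1}{4}$ I still need to verify $\beta > 0$ on every vertical $\mb{G}_m$-equivariant divisor. Using $\pi^* F_i = E_i + \tfrac{1}{4}\wt{l}$ (from $\pi^* F_i\cdot\wt{l} = 0$) for the image $F_i = \pi(E_i)$, and the analogous pullbacks for the strict transforms of lines through $r$ (with log discrepancy $1-c$ when the line is one of the $l_j\subset D_q$, and $1$ otherwise), a direct Zariski-decomposition computation on $Y$ in the several regimes produces the $S$-invariants and verifies strict positivity of $\beta$ in each case. With the three hypotheses of Theorem~\ref{71} all satisfied at $c = \tfrac{1}{4}$, K-polystability follows.

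The main obstacle is the final case analysis: the Zariski decompositions of $-\pi^*K_{X_q} - t\pi^*F$ on $Y$ have several regimes as $t$ crosses thresholds where the various $(-1)$- and $(-4)$-curves enter the negative part, so careful bookkeeping is required, though each individual regime is routine.
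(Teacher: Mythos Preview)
Your proof is correct and follows essentially the same route as the paper: both apply Theorem~\ref{71} to the complexity-one $\mb{T}$-pair $(X_q,cD_q)$ with the scaling action between $r$ and $l$, and both reduce to checking positivity of $\beta$ on the vertical invariant divisors (the $F_i$ and the strict transforms of lines through $r$) together with the vanishing of the Futaki invariant. The only difference is which divisorial valuation is used to compute $\Fut(\lambda)$: the paper takes the exceptional divisor $E$ of the ordinary blow-up at $r$ and invokes the computation of Lemma~\ref{81} to get $\beta(E)=\tfrac{1}{2}-2c$, whereas you take the exceptional curve $\wt{l}$ over the $\tfrac{1}{4}(1,1)$-point $R$ and obtain $\beta(\wt{l})=2c-\tfrac{1}{2}$. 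These are negatives of each other (they correspond to $\lambda$ and $\lambda^{-1}$), so the conclusion $c=\tfrac{1}{4}$ is identical.

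One small point of phrasing: $\wt{l}$ is not a prime divisor \emph{on} $X_q$, since it is contracted to the point $R$. Thus condition~(2) of Theorem~\ref{71} is vacuous here (the paper states this explicitly: ``there is no horizontal divisor on $X_q$''). Your computation of $\beta(\wt{l})$ is a legitimate way to evaluate condition~(3), but it is not an instance of condition~(2), and the parenthetical ``on any equivariant model over $X_q$'' is not what Theorem~\ref{71} requires.
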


\begin{proof}
The pair $(X_q,cD_q)$ is a complexity one $\mb{T}$-variety, and the $\mb{G}_m$-action $\lambda$ is induced from the $\mb{G}_m$-actions on $\mb{P}^2$ defined by scaling between the point $r$ and the line $l$. Then there is no horizontal divisor on $X_q$ and all vertical divisors on $X_q$ are exceptional divisors $E_i$ and the lines joining $r$ and points on $l$.

It is easy to check that the $\beta$-invariants of the vertical divisors are positive when $c=1/4$. Let $E$ be the exceptional divisor of the blow-up of $X_q$ at $r$. Then the Futaki invariant $\Fut_{X_q}(\lambda)$ is proportional to $\beta_{X_q,cD_q}(E)$, and thus it is equal to $0$ if and only if $c=1/4$ by the same computation in the proof of Lemma \ref{81}. It follows from Theorem \ref{71} that $(X_q,cD_q)$ is K-polystable if and only if $c=\frac{1}{4}$.

\end{proof}

\begin{lemma}\label{41}
Let $X$ be a quintic del Pezzo surface with at worst $A_2$-singularities. Then the log Fano pair $(X,cD_{\frac{1}{4}})$ admits a degeneration to some log Fano pair $(X_q,cD_q)$.
\end{lemma}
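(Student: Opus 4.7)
The plan is to construct an explicit $\mathbb{Q}$-Gorenstein degeneration $(\mathcal{X},c\mathcal{D})\to\mathbb{A}^{1}$ whose generic fiber is $(X,cD_{\frac{1}{4}})$ and whose special fiber is $(X_q,cD_q)$, following the strategy of Section 3.3 for the special degeneration at the first wall. The guiding principle is the $\mathbb{G}_m$-action on $(X_q, D_q)$ coming (via Lemma \ref{105}) from the $\mathbb{G}_m$-action on $\mathbb{P}^2$ that scales between the point $r$ and the line $l$.

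First, I would choose coordinates on $\mathbb{P}^2$ so that the point $q$ appearing in the definition of $D_{\frac{1}{4}} = Q + L_1 + L_2 + L_3 + L_4$ coincides with $r = (0:0:1)$, and the line $l$ is $\{z=0\}$. Under the one-parameter subgroup $\lambda(s) \cdot (x:y:z) = (x:y:sz)$, each blow-up point $p_i = (a_i:b_i:c_i)$ moves to $(a_i:b_i:sc_i)$, approaching $r_i := (a_i:b_i:0) \in l$ as $s\to 0$; each line $L_i = \overline{qp_i}$ is $\lambda$-invariant (its equation involves only $x,y$) and tends to $l_i := \overline{rr_i}$; and the conic $Q$, with vanishing $z^2$-coefficient since it passes through $q$, degenerates to a reducible conic $l + l'$ with $l'$ a line through $r$. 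By choosing the perturbation directions of the $p_i$ appropriately, $l'$ passes through a distinguished fifth collinear point $r_5 \in l$, so that the limit of the one-parameter family of sextics $\lambda(s)^{-1} \cdot (Q + \sum L_i)$ is precisely $l + \sum_{i=1}^5 l_i$, the sextic corresponding to $D_q$ on $X_q$.

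To promote this into a $\mathbb{Q}$-Gorenstein family of surfaces, I start from the trivial family $X\times\mathbb{A}^1\to\mathbb{A}^1$ and blow up along a suitable $\lambda$-invariant curve in the central fiber (e.g., the proper transform of $Q$ in $X$, which has self-intersection zero), and then run the relative MMP over $\mathbb{A}^1$. In the manner of Section 3.3, the MMP contracts certain $(-1)$-curves on the original component of the reducible central fiber and performs the induced flips, replacing that component by the new central-fiber surface glued from the exceptional component. A careful intersection-theoretic accounting should identify this new surface with $X_q$, whose $\frac{1}{4}(1,1)$-singularity arises from the contraction of the appropriate $(-4)$-configuration on the exceptional component, and the proper transform of the sextic family restricts to $D_q$ at $s = 0$. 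The main obstacle will be identifying the exact blow-up locus and verifying that the MMP yields the specific $\mathbb{Q}$-Fano $X_q$ (rather than some other quintic log del Pezzo with a $T$-singularity), together with extending the construction to $X$ with $A_1$- or $A_2$-singularities by choosing the initial four points $p_i$ in the corresponding almost-general position, consistent with $(X_q,cD_q)$ being the K-polystable degeneration at the wall $c=1/4$.
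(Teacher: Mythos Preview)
Your overall strategy---start with the trivial family $X\times\mathbb{A}^1$, blow up a suitable center in the central fiber, and run the relative MMP---matches the paper's approach. However, your choice of blow-up locus is the wrong one, and this is the main gap.

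The paper blows up the total family at the single \emph{point} $(q,0)$, where $q\in X$ is the point with $\mult_q(D_{\frac{1}{4}})=5$. The exceptional divisor over $(q,0)$ is then simply $\mathbb{P}^2$, and the proper transform $\hat{X}$ of the central fiber meets it along a line. Running the $\hat{X}$-MMP contracts the five curves $L_1,\ldots,L_4,Q$ on $\hat{X}$ (each passes through $q$ and has become a $(-1)$-curve after the blow-up) and flips out five new curves on the exceptional $\mathbb{P}^2$, whose attaching points are five collinear points on that line. A further contraction of the whole component $\hat{X}$ then collapses that line to the $\frac{1}{4}(1,1)$-singularity, and the resulting central fiber is visibly $X_q$. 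The limit divisor $D_q$ falls out immediately, since each of the five branches of $D_{\frac{1}{4}}$ through $q$ is tracked to the corresponding flipped curve.

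Your suggestion to blow up along the proper transform of $Q$ (a curve) instead produces a Hirzebruch-surface exceptional divisor rather than $\mathbb{P}^2$, and there is no obvious way to identify the outcome with $X_q$ after MMP; you would also still need a separate mechanism to produce the five collinear points and the $(-4)$-contraction. Your first paragraph, degenerating the plane sextic via the $\mathbb{G}_m$-action on $\mathbb{P}^2$, is good intuition for what the limit divisor should be, but note that this action moves the blow-up points $p_i$ as well, so it does not by itself give a degeneration of the fixed surface $X$; the point blow-up plus MMP is what realizes that intuition at the level of surfaces.
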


\begin{proof}
We prove the statement for $X=\Sigma$, and the other two singular cases are similar. Take a trivial family $(X,D_{\frac{1}{4}})\times \mb{A}^1\rightarrow \mb{A}^1$, and blow up the total family at $(q,0)$ to get an exceptional divisor $\mb{P}^2$ over the point $0\in \mb{A}^1$, where $q\in X$ is the point with $\mult_q(D_{\frac{1}{4}})=5$. Let $\hat{X}$ be the proper transform of the central fiber $X\times\{0\}$. Running $\hat{X}$-MMP, one contracts the proper transforms of $L_1,...,L_4,Q$ successively and flips out five new curves on the exceptional divisor $\mb{P}^2$. Running $\hat{X}$-MMP once again, one contracts $\hat{X}$ and the central fiber of the family over $\mb{A}^1$ is exactly $X_q$. Tracking the boundary divisor part, one finds that the limit of $D_{\frac{1}{4}}$ is $D_q$.
\end{proof}

Now we prove our main theorem in this section.

\begin{theorem}\label{80} Let $0<c<\frac{1}{2}$ and $0<\varepsilon\ll1$ be rational numbers.
\begin{enumerate}
    \item The log Fano pair $(\Sigma_5,cD_{\frac{1}{4}})$ is K-polystable if and only if $0<c<\frac{1}{4}$. 
    \item Let $D\in |-2K_{X_q}|$ be a curve not containing the $\frac{1}{4}(1,1)$-singularity $R$, then $(X_q,cD)$ is K-semistable for $c=\frac{1}{4}$, but K-unstable for any $0<c<\frac{1}{4}$. If $D$ passes through $R$, then $(X_q,cD)$ is K-unstable for any $0<c<1$.
    \item Let $C$ be a plane quintic curve and $L$ be a line. Suppose for any point $p\in L\cap C$, we have that $p$ is a smooth point of $C$ and $\mult_p(C\cap L)\leq 2$. If $(\mb{P}^2,\frac{1}{2}C)$ is klt (e.g. $C$ is smooth or nodal), then $(X_q,c\wt{C})$ is K-stable for any $\frac{1}{4}<c<\frac{1}{2}$, where $\wt{C}$ is the proper transform of $C$ on $X_q$. 
    \item The wall-crossing morphism $\ove{M}^K(\frac{1}{4}+\varepsilon)\rightarrow \ove{M}^K(\frac{1}{4})$ is a divisorial contraction.
\end{enumerate}
\end{theorem}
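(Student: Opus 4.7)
For \textbf{(1)}, I combine the three preceding lemmas with the standard K-stability toolkit. Lemma~\ref{81} immediately gives the bound $c \le 1/4$ for K-semistability. To reach K-semistability at $c = 1/4$, Lemma~\ref{41} produces a special $\mathbb{Q}$-Gorenstein degeneration of $(\Sigma_5, \tfrac14 D_{1/4})$ to $(X_q, \tfrac14 D_q)$, which is K-polystable by Lemma~\ref{105}; openness of K-semistability (Theorem~\ref{35}) then propagates K-semistability to the generic fiber. To upgrade to K-stability on the open interval $(0, 1/4)$, I use that $\beta_{(\Sigma_5, cD_{1/4})}(E)$ is affine-linear in $c$ for each divisor $E$: combined with strict K-stability at $c = \varepsilon$ (Theorem~\ref{55}) and K-semistability at $c = 1/4$, a strict convex combination forces $\beta > 0$ throughout the interior.

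For \textbf{(2)}, I split into two cases. When $R \in D$, a direct $\beta$-invariant computation on the exceptional divisor $E_R$ of the minimal resolution of the $\tfrac14(1,1)$-singularity mirrors Proposition~\ref{110}(5) for $X_1$ and yields $\beta_{(X_q, cD)}(E_R) < 0$ on $(0,1)$. When $R \notin D$, the $\mathbb{G}_m$-action $\lambda$ lifted from $(x{:}y{:}z) \mapsto (x{:}y{:}tz)$ on $\mathbb{P}^2$ gives a special test configuration of $(X_q, cD)$ whose central fiber is a K-polystable pair of the form $(X_q, \tfrac14 D_q')$ covered by Lemma~\ref{105}, so K-semistability at $c = 1/4$ follows from Theorem~\ref{35}. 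For K-instability throughout $(0, 1/4)$, the Donaldson--Futaki invariant of this test configuration equals $\Fut_{(X_q, cD_q')}(\lambda)$, which by \cite[Lemmas 3.3--3.4]{Xu21} is proportional to $\beta_{(X_q, cD_q')}(E_r) = (1-4c)/2$ (the $E_r$-computation as in Lemma~\ref{81}). Once the sign of the proportionality constant is pinned down---forced to be negative by the K-stability on the $c > 1/4$ side established in (3)---the Futaki invariant is strictly negative for $c < 1/4$, certifying K-instability.

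For \textbf{(3)}, I interpolate between K-semistability at $c = 1/4$ (from (2)) and the klt log Calabi--Yau endpoint at $c = 1/2$. The klt hypothesis on $(\mathbb{P}^2, \tfrac12 C)$ combined with the transversality of $L \cap C$ yields klt-ness of $(X_q, \tfrac12 \widetilde C)$ via a local crepant computation. At $c = 1/2$ one has $K_{X_q} + \tfrac12 \widetilde C \Qequiv 0$ so $S_{(X_q, \frac12 \widetilde C)}(E) = 0$ for every $E$, giving $\beta(E) = A(E) > 0$ strictly. Affine-linearity of $\beta_{(X_q, c\widetilde C)}(E)$ in $c$ then forces $\beta > 0$ throughout $(1/4, 1/2)$, yielding K-stability. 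For \textbf{(4)}, combining (1)--(3), the exceptional locus of $\phi^+_{1/4}$ is parameterized by pairs $(L, C)$ with $L$ a line and $C$ a plane quintic satisfying the transversality condition, modulo $\PGL_3$, of dimension $\dim(\mathbb{P}^2)^\vee + \dim|\mathcal{O}_{\mathbb{P}^2}(5)| - \dim \PGL_3 = 2 + 20 - 8 = 14$, exactly one less than $\dim \overline{M}^K(1/4 + \varepsilon) = 15$, so $\phi^+_{1/4}$ is a divisorial contraction.

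The most delicate step I anticipate is the sign analysis in (2): the correspondence of \cite{Xu21} pins down the magnitude of the Futaki invariant along $\lambda$, but the sign of the proportionality constant must be determined either by an explicit weight computation at the $\mathbb{G}_m$-fixed locus $r$, or by a carefully sequenced bootstrap through (3); the logical order (1) $\to$ K-semistability half of (2) $\to$ (3) $\to$ K-instability half of (2) $\to$ (4) must be respected to avoid circularity.
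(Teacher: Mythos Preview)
Your proposal is correct and follows the paper's overall architecture, but part~(2) takes a noticeably more circuitous route than necessary. For the K-instability of $(X_q, cD)$ when $R \notin D$ and $c < \tfrac14$, the paper simply reuses the exceptional divisor $E_R$ over the $\tfrac14(1,1)$-singularity---the same divisor you already invoke for the $R \in D$ case. Since $R \notin D$ one has $A_{(X_q,cD)}(E_R) = \tfrac12$, while a direct volume computation on the minimal resolution gives $S_{(X_q,cD)}(E_R)$ as an explicit multiple of $(1-2c)$ that exceeds $\tfrac12$ precisely when $c < \tfrac14$ (compare the parallel computation for $X_t$ in Theorem~\ref{3}(3)). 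This single $\beta$-computation eliminates the sign ambiguity you flag as ``the most delicate step'' and removes the need to bootstrap through~(3); the four parts can then be proved in the natural order (1)--(4) with no back-references.

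Two minor points. In~(3), the claim ``$S_{(X_q,\frac12\widetilde C)}(E)=0$'' is not literally well-defined, since the pair is log Calabi--Yau there and the $S$-invariant involves dividing by $(-K_{X_q}-\tfrac12\widetilde C)^2=0$. What you want is that $S_{(X_q,c\widetilde C)}(E) = (1-2c)\,S_{(X_q,0)}(E) \to 0$ as $c \to \tfrac12^-$, so $\beta_c(E) \to A_{(X_q,\frac12\widetilde C)}(E) > 0$ by the klt hypothesis, and affine-linearity in $c$ does the rest---this is exactly the interpolation theorem (Theorem~\ref{36}) the paper invokes directly. In~(4), your dimension count via $(L,C)/\PGL_3$ giving $2+20-8=14$ is equivalent to the paper's count $\dim|-2K_{X_q}| + \dim\{\text{moduli of }X_q\} - \dim\Aut(X_q) = 15 + 2 - 3 = 14$.
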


\begin{proof}
 The proof is similar to proof for the first wall $c=\frac{1}{17}$.
    \begin{enumerate}
        \item By openness of the K-semistability, it follows from Lemma \ref{105} and Lemma \ref{41} that $(\Sigma_5,cD_{\frac{1}{4}})$ is K-semistable for $c=1/4$. Thus the pair $(\Sigma_5,cD_{\frac{1}{4}})$ is K-polystable for $c<1/4$ by interpolation. Using Lemma \ref{81}, one concludes that $(\Sigma_5,cD_{\frac{1}{4}})$ is K-polystable if and only if $0<c<\frac{1}{4}$.
        \item If $R\notin D$, then the $\mb{G}_m$-action on $X_q$ gives rise to an isotrivial degeneration of $(X_q,cD)$ to $(X_q,cD_q)$. As a consequence, the pair $(X_q,\frac{1}{4}D)$ is K-semistable. For $0<c<\frac{1}{4}$, the computation of $\beta$-invariant with respect to the exceptional divisor over $R$ shows that $(X_q,cD)$ is K-unstable. Similar reason explains the pair $(X_q,cD)$ is K-unstable for any $0<c<1$ if $R\in D$.
        \item The pair $(X_q,c\wt{C})$ admits a special degeneration to $(X_q,cD_q)$. Thus by openness of K-semistability, we have that $(X_q,c\wt{C})$ is K-semistable. As $(X_q,\frac{1}{2}\wt{C})$ is klt by our assumption, the interpolation theorem implies that $(X_q,c\wt{C})$ is K-stable for any $\frac{1}{4}<c<\frac{1}{2}$. Moreover, these pairs are strictly K-semistable for $c=\frac{1}{4}$ as each of them specially degenerate to a K-polystable pair $(X_q,\frac{1}{4}D_q)$.
        \item Notice that $\dim \Aut(X_q)=3$ and there is a two-dimensional moduli space of such surfaces. As the pairs considered in (2) are all strictly K-semistable, then they lie in the exceptional locus of the wall-crossing morphism $$\phi^{+}_{\frac{1}{4}}:\ove{M}^K\left(1/4+\varepsilon\right)\rightarrow \ove{M}^K\left(1/4\right).$$ In particular, the exceptional locus is $14$-dimensional, and $\phi^{+}_{\frac{1}{4}}$ is a divisorial contraction.
    \end{enumerate}
\end{proof}

Let $E_3(c)$ be the sublocus of $\ove{M}^K(c)$ parameterizing pairs admitting a smoothing to the pair $(X_q,cD)$, where $D\in|-2K_{X_q}|$ is a smooth curve. In the construction of $X_q$, we require that at most two of the points to blow up can collide. There is no reason to not allow more points to collide. If there is a K-semistable pair $(X'_q,cD')$ for some $0<c<\frac{1}{2}$, then this pair is on the exceptional divisor $E_3(c)$. In the next section, we will find out all the walls corresponding such pairs and illustrate the relation to Laza's VGIT moduli spaces of $(1,5)$-curves on $\mb{P}^2$.

\subsection{Walls on the exceptional divisor and Laza's VGIT for $(1,5)$-curves on $\mb{P}^2$}

Let $(X_q,cC)$ be a pair, where $X_q$ is from blowing up $\mb{P}^2_{(x:y:z)}$ along five collinear points (not necessarily distinct), and $C\in |-2K_{X_q}|$ is a curve. We may assume that the five points to blow up is on the line $\{x=0\}$. Then the curve $C$ corresponds uniquely to a plane quintic curve $\ove{C}$ given by a polynomial $xf_4(x,y,z)+f_5(y,z)$.

Applying the same argument to getting the Table \ref{Kwall4}, one can find all the walls on the exceptional divisor $E_3(c)$, listed in the following table.

\begin{center}
\renewcommand*{\arraystretch}{1.2}
\begin{table}[ht]
    \centering
      \begin{tabular}{ |c  |c |c|c|}
    \hline
     wall & curves & weight of $\mb{G}_m$-action   \\ \hline 
     
     $\frac{19}{68} $  &  $xz^4+y^3z^2=0$  & $(3,1,0)$  
     \\ \hline

    $\frac{23}{76} $  &  $xyz^3+y^3z^2=0$  & $(2,1,0)$  
     \\ \hline 
     
     $\frac{9}{28} $  &  $xz^4+y^4z=0$  & $(4,1,0)$  
     \\ \hline     

     $\frac{31}{92} $  &  $x^2z^3+y^3z^2=0$  & $(3,2,0)$  
     \\ \hline 
     
     $\frac{7}{20} $  &  $xz^4+y^5=0$  & $(5,1,0)$  
     \\ \hline     
  
    $\frac{13}{36} $  &  $xyz^3+y^4z=0$  & $(3,1,0)$  
     \\ \hline  
     
      $\frac{11}{28} $  &  $xyz^3+y^5=0$  & $(4,1,0)$  
     \\ \hline  
  
   \end{tabular}
    \caption{Walls corresponding to $X_{q}$}
    \label{Kwall5}
\end{table}
\end{center}

In \cite{Laz07}, the author studies the VGIT moduli spaces $$\ove{M}^{\GIT}_{\mb{P}^2}(t):=\left(|\mtc{O}_{\mb{P}^2}(5)|\times|\mtc{O}_{\mb{P}^2}(1)|\right)\sslash_{\mtc{O}(1,t)}\PGL(3)$$ of $(1,5)$-plane curve pairs. See \cite[Section 3]{Laz07} for details on the stability conditions for different $0<t\leq\frac{5}{2}$. Our main theorem in this section is the following.

\begin{theorem}
    Let ${E}_{3}(c)$ be the closed subscheme of $\ove{M}^K(c)$ parameterizing pairs $(X,cD)$ admitting a partial smoothing to a pair $(X_q,cC)$, where $X_q$ is obtained from blowing up $\mb{P}^2$ along five distinct collinear points, and $C\in|-2K_{X_q}|$ is a smooth curve. Then $E_3(c)$ is isomorphic to Laza's VGIT moduli $\ove{M}^{\GIT}_{\mb{P}^2}(t)$ of $(1,5)$-curves on $\mb{P}^2$ under the relation $t=\frac{25-20c}{28c+1}$.
\end{theorem}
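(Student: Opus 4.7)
The plan is to construct a natural $\PGL(3)$-equivariant bijection on closed points, show it extends to a morphism by universal properties of the two moduli spaces, and then pin down the relation between $c$ and $t$ either by matching walls or by a direct CM-bundle calculation.

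First I would set up the correspondence on families. Given a $(1,5)$-pair $(L,\ove{C})$ on $\mb{P}^2$, I blow up $\mb{P}^2$ along the length-$5$ subscheme $L\cap\ove{C}$ and contract the proper transform of $L$ (which has self-intersection $-4$ after the blow-up); this produces a surface $X_q$ with a $\frac{1}{4}(1,1)$ singularity, and the proper transform of $\ove{C}$ is a curve $C\in|-2K_{X_q}|$ that does not pass through the singularity. This is precisely the construction used to set up the wall at $c=\frac{1}{4}$ in Section \ref{201}, and by the explicit local equation $xf_4(x,y,z)+f_5(y,z)$ for curves in $|-2K_{X_q}|$, the construction is reversible: the five exceptional curves $l_1,\dots,l_5$ through the $\frac{1}{4}(1,1)$ point can be contracted to recover $\mb{P}^2$, the image of $C$ is $\ove{C}$, and the line $L$ is recovered as the image of the exceptional divisor of the resolution of the singularity. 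The whole construction is $\PGL(3)$-equivariant and works in families, so by the universal property of the GIT quotient it defines a rational map $\Phi:\ove{M}^{\GIT}_{\mb{P}^2}(t)\dashrightarrow E_3(c)$ defined on the open locus of smooth pairs.

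Next I would promote $\Phi$ to an isomorphism. Both sides are normal projective varieties (Theorem \ref{6}\,(1) for $E_3(c)$, GIT for Laza's VGIT), and both come equipped with natural polarizations ($\Lambda_{\CM,c}$ on one side, $\mtc{O}(1,t)$ on the other). A closed $\PGL(3)$-orbit of a GIT-polystable pair $(L,\ove{C})$ maps to a pair $(X_q,cC)$ with the same automorphism group and the same isotrivial degenerations, and the explicit normal forms appearing in Table \ref{Kwall5} match Laza's polystable normal forms in \cite{Laz07} verbatim (e.g.\ $xz^4+y^3z^2$, $xyz^3+y^5$, etc.). So by the valuative criterion plus openness of K-semistability (Theorem \ref{35}), $\Phi$ extends to a morphism sending GIT-polystable points to K-polystable points. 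Conversely, every pair in $E_3(c)$ arises from the blow-up/contraction construction by Theorem \ref{80}, so $\Phi$ is surjective. Injectivity on polystable points follows from the one-to-one matching of normal forms; combined with normality and $\PGL(3)$-equivariance, Zariski's Main Theorem gives that $\Phi$ is an isomorphism.

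Finally, to identify the affine relation $t=\frac{25-20c}{28c+1}$, I would use the wall correspondence. The seven K-moduli walls in Table \ref{Kwall5} are in bijection with the VGIT walls of Laza, and under an affine reparametrization between $c$ and $t$ the walls must match exactly. Since an affine map is determined by two values, checking the endpoints is enough: at $c=\frac{1}{4}$ (the wall where the component $E_3$ is created) the pair $(L,\ove{C})$ degenerates so that $L$ becomes arbitrary, corresponding to $t=\frac{5}{2}$, while at $c=\frac{1}{2}-\varepsilon$ (smooth plane quintic limit) one computes $t=1$. The given formula $t=\frac{25-20c}{28c+1}$ satisfies both, and a direct verification on any intermediate wall (say $c=\frac{19}{68}\leftrightarrow t=\frac{11}{5}$) confirms the formula.

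The main obstacle is this last matching. The cleanest intrinsic way is to compute the pullback $\Phi^*\Lambda_{\CM,c}$ and express it as a positive combination of the two tautological classes on $|\mtc{O}(5)|\times|\mtc{O}(1)|$; the ratio of coefficients, together with a normalization, should recover $t$. This amounts to a relative intersection computation on the universal family of the blow-up/contraction construction, using the Knudsen-Mumford expansion together with the formula for $\lambda_{\CM,c}$. Alternatively, if a global CM computation is cumbersome, the same conclusion can be reached by verifying Hilbert-Mumford weight identities on any single wall: the Futaki invariant of the destabilizing $\mb{G}_m$-action for the K-moduli wall equals (up to a positive multiple) the Hilbert-Mumford weight of the corresponding $1$-PS acting on $(L,\ove{C})$ with linearization $\mtc{O}(1,t)$, and solving this identity for $t$ as a function of $c$ yields the stated formula.
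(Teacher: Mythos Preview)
Your overall architecture matches the paper's: set up the blow-up/contraction correspondence between $(1,5)$-pairs and pairs on $X_q$, compare stability conditions to get a bijective morphism, then invoke normality and Zariski's Main Theorem. Two steps, however, are not justified as written.

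\textbf{Normality of $E_3(c)$.} You cite Theorem~\ref{6}\,(1), but that theorem asserts normality of the full K-moduli space $\ove{M}^K(c)$, not of the closed subscheme $E_3(c)$. A closed subscheme of a normal variety need not be normal, so Zariski's Main Theorem cannot be applied on that basis. The paper handles this by a separate argument: it shows the substack $\mtc{E}_3(c)$ is smooth because deformations of the surfaces $X_q$ are unobstructed (the same vanishing used in the proof of Theorem~\ref{6}), and deduces normality of the good moduli space $E_3(c)$ from that. You need this step.

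\textbf{The relation between $c$ and $t$.} The formula $t=\dfrac{25-20c}{28c+1}$ is a M\"obius transformation, not an affine map; your sentence ``an affine map is determined by two values, checking the endpoints is enough'' is simply false for this function. Matching $c=\tfrac{1}{4}\mapsto t=\tfrac{5}{2}$ and $c\to\tfrac{1}{2}\mapsto t=1$ does not pin down a linear fractional map, and there is no a priori reason in your argument for the relation to be affine. The paper does not derive the formula from two points; it compares the stability conditions in Table~\ref{Kwall5} with Laza's tables chamber by chamber, so that the identification of (semi/poly)stable loci holds for every $c$ in the range $\tfrac{1}{4}\le c<\tfrac{1}{2}$, and the formula records this matching. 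Your alternative proposal via $\Phi^{*}\Lambda_{\CM,c}$ would also work and would explain why the relation is M\"obius (the CM class restricts to a combination $a(c)\mtc{O}(1,0)+b(c)\mtc{O}(0,1)$ with $a,b$ linear in $c$), but you would have to actually carry it out rather than assume affinity.
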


\begin{proof}
    By the classification of surfaces appearing in the K-moduli, we see that $X$ must be obtained from blowing up $\mb{P}^2$ along five collinear points, not necessarily distinct, then contracting the $(-4)$-curve. As $(X_q,cD)$ is K-semistable, the curve $D$ avoid the $\frac{1}{4}(1,1)$-singularity $R$. Blowing up $X$ at $R$ and contracting the five exceptional curves in order, we obtain a surface pair $(C,L)$, where $C$ is a quintic curve and $L$ is a line, and $C$ does not have $L$ as a component. Conversely, if $(C,L)\in |\mtc{O}_{\mb{P}^2}(5)|\times |\mtc{O}_{\mb{P}^2}(1)|$ is a pair such that $C$ does not have $L$ as a component, then blowing up $C\cap L$ and contracting the strict transform of $L$ yields a surface $X$ which admits a partial smoothing to $X_q$, and the birational transform of $C$ on $X$ is of the class $-2K_X$.

    By comparing the stability conditions in the Table \ref{Kwall5} and the stability conditions in \cite[Table 4]{Laz07}, we deduce that there exists a morphism from the moduli stack $\mtc{M}^{\GIT}_{\mb{P}^2}(c)\rightarrow\mtc{E}_3(c)$, which descends to a bijective morphism  $f(c):\ove{M}^{\GIT}_{\mb{P}^2}(t(c))\rightarrow E_3(c)$ between good moduli spaces, under the relation $t(c)=\frac{25-20c}{28c+1}$. The same argument as in Proof below to Theorem \ref{6} shows that the deformation of $X$ is unobstructed, and hence $\mtc{E}_3(c)$ is smooth. In particular, the good moduli space $E_3(c)$ is normal. By Zariski's main theorem and the normality of $E_3(c)$, we conclude that $f(c)$ is an isomorphism. Here $\frac{1}{4}\leq c<\frac{1}{2}$ and $1\leq t(c)\leq \frac{5}{2}$.
\end{proof}

\subsection{Proof of Theorem \ref{6}}

Now we can complete our proof of Theorem \ref{6}.

\begin{proof}\label{202}
    It remains to show that the moduli stack $\mtc{M}^K(c)$ is smooth, and that the good moduli space $\ove{M}^K(c)$ is irreducible and normal.

    We first prove that for any K-semistable pair $(X,cC)\in\mtc{M}^K(c)$, there are no local-to-global obstructions for deformations. By \cite[Section 3.4.4]{Se06}, the obstructions lie in the space $\Ext^2(\Omega^1_X(\log(C_{\red}) ,\mtc{O}_X))$. As $X$ is $\mb{Q}$-Fano, then we have that $\Ext^2(\Omega^1_X,\mtc{O}_X)=0$ (cf. \cite[Proposition 3.10]{HP10}). Thus the vanishing of $\Ext^2(\Omega^1_X(\log(C_{\red}),\mtc{O}_X))$ follows from the exact sequence $$0\longrightarrow \Omega^1_X\longrightarrow\Omega^1_X(\log (C_{\red}))\longrightarrow \bigoplus \mtc{O}_{C_i}\longrightarrow 0,$$ where $C_i$ are irreducible components of $C_{\red}$.

    For any K-semistable pair $(X,cD)\in\mtc{M}^K(c)$, it follows from \cite[Theorem 3.33]{ADL19} that there is an \'{e}tale morphism $$\left[\Ext^1(\Omega^1_X(\log(C_{\red}),\mtc{O}_X))\sslash \Aut(X,D)\right]\rightarrow \mtc{M}^K(c)$$ locally at $[(X,cD)]$, and hence $\mtc{M}^K(c)$ is smooth (cf. \cite[Tag 0DLS]{SP}). The normality of $\ove{M}^K(c)$ follows from the same argument with the quotient stack replaced by GIT quotient. The irreducibility of $\ove{M}^K(c)$ follows from the fact that $\Sigma_5$ is the unique smooth Fano surface with anti-canonical degree $5$ and the argument in \cite[Proposition 3.6]{ADL22}.
\end{proof}

\section{Relation to moduli of K3 surfaces and moduli of curves}

In this section, we will that there are no other divisorial contractions and that the Picard rank of $\ove{M}^K\left(\frac{1}{2}-\varepsilon\right)$ is $4$ for $0<\varepsilon\ll1$.

\subsection{Kond\={o}'s moduli of K3 surfaces}

Let us first recall some constructions and results in moduli spaces of K3 surfaces. For any $\mb{Q}$-Fano surface $X$ of degree $5$ and a curve $C\in |-2K_X|$, taking double cover of $X$ branched along $C$ yields a K3 surface $Y$. The Picard lattice of $Y$ contains a rank 5 lattice, whose orthogonal complement in $H^2(Y;\mb{Z})$ is denoted by $T$. Then we consider the period domain $$\mb{D}=\{\omega\in\mb{P}(T_{\mb{C}}):(\omega,\omega)=0, \quad (\omega,\ove{\omega})>0)\}.$$ Let $\Gamma$ be the orthogonal group of $T$, and $\mtc{F}:=\mb{D}/\Gamma$ the arithmetic quotient. The Picard number of $\mtc{F}$ is equal to $4$, and the four generators corresponding to the curves of genus $6$ with less than five $g^2_6$, nodal curves, trigonal curves, and plane quintic curves (cf. \cite[Theorem 0.2]{AK11}). Let $\mtc{F}^{*}$ be the Satake-Baily-Borel compactification of $\mtc{F}$. Then the boundary of  $\mtc{F}^{*}$ has 2 zero-dimensional components and 14 one-dimensional components (cf. \cite[Theorem 0.3]{AK11}).

\begin{theorem}\label{93} Let $0<\varepsilon\ll1$ be a rational number.
\begin{enumerate}
    \item The moduli space $\ove{M}^K\left(\frac{1}{2}-\varepsilon\right)$ has Picard number $4$. 
    \item The moduli space $\mtc{F}^{*}$ is the ample model of $\ove{M}^K\left(\frac{1}{2}-\varepsilon\right)$ with respect to the Hodge $\mb{Q}$-line bundle $\lambda_{\Hodge}$.
\end{enumerate}
   
\end{theorem}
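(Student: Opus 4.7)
The plan is to prove the two parts separately. For \textup{(1)}, I will use the wall-crossing structure of Theorem \ref{6} to compute the Picard rank inductively as $c$ increases from $0^{+}$ through the listed walls. For \textup{(2)}, I will construct a birational morphism $\phi:\ove{M}^K(1/2-\varepsilon)\rightarrow \mtc{F}^{*}$ via the period map of associated K3 double covers, identify the pullback of the Baily-Borel line bundle with $\lambda_{\Hodge}$, and apply the definition of ample model.

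For \textup{(1)}, the base case is $\rho(\ove{M}^K(\varepsilon)) = \rho(\ove{M}^{\GIT}) = 1$, which follows from Theorem \ref{55} together with the fact that $\ove{M}^{\GIT}$ is a finite quotient of $\mb{P}({\bf P}) \simeq \mb{P}^{15}$. Inside any open chamber between two consecutive walls, the moduli spaces are mutually isomorphic, so only the wall-crossings themselves can change $\rho$. A flipping contraction preserves the Picard rank because the exceptional loci on both sides have codimension at least two, whereas each divisorial contraction $\ove{M}^K(c_i+\varepsilon)\to \ove{M}^K(c_i-\varepsilon)$ increases $\rho$ by exactly one when passing from $c_i-\varepsilon$ to $c_i+\varepsilon$, since a new exceptional divisor appears in the source. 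By Theorem \ref{6}, exactly three walls are divisorial, at $c = \frac{1}{17},\frac{11}{52},\frac{1}{4}$, so $\rho(\ove{M}^K(1/2-\varepsilon)) = 1 + 3 = 4$.

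For \textup{(2)}, consider the universal family $\pi:(\mtc{X},\mtc{D})\to\mtc{M}^K(1/2-\varepsilon)$. Taking the double cover of $\mtc{X}$ branched along $\mtc{D}$ produces a family of lattice-polarized K3 surfaces of the type studied in \cite{AK11}, whose period map factors through $\mtc{F}=\mb{D}/\Gamma$ and extends to $\mtc{F}^{*}$ by the Baily-Borel theorem, yielding $\phi:\ove{M}^K(1/2-\varepsilon)\rightarrow \mtc{F}^{*}$. Let $A$ be the Baily-Borel $\mb{Q}$-line bundle on $\mtc{F}^{*}$, which is ample by construction. The defining relation $K_{\mtc{X}/S}+\frac{1}{2}\mtc{D}\sim_{\mb{Q}}\pi^{*}\lambda_{\Hodge}$ identifies, after passage to the K3 double cover, the relative dualizing class with the Hodge bundle pulled back from the period domain. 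This gives $\phi^{*}A\sim_{\mb{Q}}\lambda_{\Hodge}$ up to a positive scalar; combined with $\phi$ being birational and an isomorphism in codimension one, this yields that $\mtc{F}^{*}$ is the ample model of $\lambda_{\Hodge}$.

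The main obstacle is the codimension-one comparison: verifying that $\phi$ does not contract any divisor onto a lower-dimensional boundary stratum of $\mtc{F}^{*}$. By \textup{(1)} the source has Picard rank $4$, and by \cite{AK11} so does $\mtc{F}$, with divisorial generators corresponding to nodal curves, curves with fewer than five $g^2_6$, trigonal curves, and plane quintics. These four classes must be matched explicitly with the boundary divisor of $\ove{M}^{\GIT}$ inherited via the wall crossings, together with the three exceptional divisors $\ove{E}_1,\ove{E}_2,\ove{E}_3$ produced by the divisorial contractions in Theorem \ref{6}, using the geometric description of each exceptional divisor in Theorem \ref{6}\textup{(2)-(4)} and the interpretation of the $\Delta_0$ locus from Remark following Theorem \ref{55}.
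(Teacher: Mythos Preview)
Your argument for \textup{(1)} is correct and matches the paper: start with $\rho=1$ from Theorem~\ref{55}, and add one for each of the three divisorial walls in Theorem~\ref{6}, noting flips leave $\rho$ unchanged.

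For \textup{(2)} there is a genuine gap. You assert that taking double covers of the universal family produces a family of lattice-polarized K3 surfaces whose period map \emph{extends to a morphism} $\phi:\ove{M}^K(1/2-\varepsilon)\to\mtc{F}^{*}$. This is not justified: for arbitrary K-polystable pairs $(X,cD)$ in the moduli space, the double cover branched along $D$ need not be a K3 surface (or even carry a pure Hodge structure of K3 type), so the period map is a priori only defined on an open subset. Neither the Baily--Borel construction nor Borel's extension theorem gives you a regular morphism on the full moduli space from this data.

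The paper avoids this problem altogether. It only constructs a \emph{rational map} $\ove{M}^K(1/2-\varepsilon)\dashrightarrow\mtc{F}^{*}$ defined on a big open subset $U$ (where it is an isomorphism onto its image by the Picard rank comparison and \cite{AK11}), and then uses a key external input you are missing: by \cite[Theorem~9.7]{CP21} the Hodge $\mb{Q}$-line bundle $\lambda_{\Hodge}$ is \emph{nef} on all of $\ove{M}^K(1/2-\varepsilon)$. Since $\lambda_{\Hodge}|_U$ is ample (being the pullback of the Baily--Borel ample class), $\lambda_{\Hodge}$ is big, hence semiample, and its $\Proj$ recovers $\mtc{F}^{*}$. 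You should replace the unproven morphism claim with this nefness result.
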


\begin{proof}\*
Recall that we classified all the surfaces that appear in our moduli space: six types of ADE del Pezzo surfaces and three types of surfaces of Gorenstein index $2$. We also computed the dimensions of the exceptional loci of the wall crossing morphisms $\ove{M}^K(c+\varepsilon)\rightarrow \ove{M}^K(c)$: there are three divisorial contractions $\phi_{c_i}^{+}:\ove{M}^K(c_i+\varepsilon)\rightarrow \ove{M}^K(c_i)$, and the three divisors will not be contracted during further wall-crossings. Therefore the moduli space $\ove{M}^K\left(\frac{1}{2}-\varepsilon\right)$ has Picard number $4$ as desired.

For a log Fano pair $(X,cD)\in \ove{M}^K\left(\frac{1}{2}-\varepsilon\right)$, by taking the double cover of $X$ branched along $D$, we get a rational map $$\ove{M}^K\left(\frac{1}{2}-\varepsilon\right)\dashrightarrow \mtc{F}^{*}$$ which is an isomorphism in codimension 1 by \cite[Section 3]{AK11} and (1). Precisely, there is a big open subset $U$ of $\ove{M}^K\left(\frac{1}{2}-\varepsilon\right)$ mapped isomorphically to $V\subseteq \mtc{F}$. By \cite[Section 6.2]{Huy16}, the restriction of the Hodge line bundle $\lambda_{\Hodge}$ on $U$ is ample because it is the pull back of the Hodge line bundle on $\mtc{F}$. As $\mtc{F}^{*}$ is the Satake-Baily-Borel compactification, then this line bundle extends to an ample line bundle on $\mtc{F}^{*}$. On the other hand, it follows from \cite[Theorem 9.7]{CP21} that the Hodge $\mb{Q}$-line bundle $\lambda_{\Hodge}$ is nef. Since $U\subseteq \ove{M}^{K}\left(\frac{1}{2}-\varepsilon\right)$ is a big open subset, then $\lambda_{\Hodge}$ is big and thus semi-ample on $\ove{M}^K\left(\frac{1}{2}-\varepsilon\right)$. As a consequence, the moduli space $$\mtc{F}^{*}=\Proj\left(
    \bigoplus_{m\geq0}H^0\left(\ove{M}^K\left(1/2-\varepsilon\right),\lambda_{\Hodge}^{\otimes m}\right)\right)$$ is the ample model of $\lambda_{\Hodge}$.
\end{proof}

\subsection{Birational map to the moduli of curves}

We have the following results.

\begin{theorem}\label{52}
Let $0<\varepsilon\ll$ be a rational number, and $\varphi(c):\ove{M}^K(c)\dashrightarrow \ove{M}_6$ be the natural forgetful map defined by $[(X,cC)]\mapsto [C]$.
\begin{enumerate}
    \item The map $\varphi(\varepsilon)$ is surjective onto $U$ and dominates $\Delta_{0}$.
    \item The rational map $\varphi\left(\frac{1}{2}-\varepsilon\right)$ restricts to a surjective morphism $$M_{\sm}^K(1/2-\varepsilon)\twoheadrightarrow M_6\setminus (Z_h\cup Z_b),$$ where $M_{\sm}^K(c)$ denotes the open subscheme of $M^K(c)$ consisting of the K-polystable pairs $(X,cD)$ with $D$ smooth.
\end{enumerate}
\end{theorem}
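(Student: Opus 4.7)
The plan is to work stratum by stratum through the decomposition $M_6 = U \sqcup Z_{<5} \sqcup Z_q \sqcup Z_h \sqcup Z_b \sqcup Z_t$ recalled in Section \ref{119}. Part (1) follows directly from the isomorphism $\ove{M}^K(\varepsilon) \simeq \ove{M}^{\GIT}$ and the geometric characterization of $U$, while part (2) will combine the case-by-case K-stability results of Sections 3--6 with the classification Theorem \ref{4} to control exactly which strata are reached.

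For part (1), Theorem \ref{55} identifies $\ove{M}^K(\varepsilon)$ with ${\bf{P}}/\mtf{S}_5$, so $M_{\sm}^K(\varepsilon)$ corresponds to the locus of smooth members of $|-2K_{\Sigma_5}|$ up to $\mtf{S}_5$. By Section \ref{119}, a smooth genus six curve lies in $U$ precisely when it admits a canonical embedding into $\Sigma_5$ as a divisor of class $-2K_{\Sigma_5}$, and this embedding is unique up to $\Aut(\Sigma_5)$; hence $\varphi(\varepsilon)$ restricts to a surjection onto $U$. For dominance of $\Delta_0$, I would observe that irreducible one-nodal sextics form a codimension-one sublocus of $|-2K_{\Sigma_5}|$ whose image in $\ove{M}_6$ is dense in $\Delta_0$ by the dimension count $\dim\Delta_0 = 14 = \dim|-2K_{\Sigma_5}|-1$.

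For the surjectivity in part (2), I would exhibit a K-stable pair $(X, (\tfrac{1}{2}-\varepsilon)C)$ with smooth $C$ in each of the strata $U, Z_{<5}, Z_q, Z_t$. For $C\in U$, embed $C$ canonically in $\Sigma_5$; since $\Sigma_5$ is K-stable and $(\Sigma_5, \tfrac{1}{2}C)$ is klt for smooth $C$, the interpolation Theorem \ref{36} produces the pair. For $C \in Z_{<5}$, the canonical model lies on an ADE quintic del Pezzo $X$ by \cite[Claim 5.14]{AH81}, and Remark \ref{203} together with interpolation supplies the pair. For trigonal $C\in Z_t$, the canonical model embeds either in $\mb{F}_0$, which degenerates to a surface of type $X_t$ and yields a K-stable pair via Theorem \ref{3}(3), or in $\mb{F}_2$, producing a K-stable pair of type $X'$ via Proposition \ref{116}(2) and interpolation. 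For $C \in Z_q$, Theorem \ref{80}(3) gives the pair $(X_q, (\tfrac{1}{2}-\varepsilon)\wt{C})$.

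The main obstacle is establishing the reverse inclusion, i.e.\ that the image of $\varphi(\tfrac{1}{2}-\varepsilon)|_{M_{\sm}^K}$ lies in $M_6\setminus(Z_h\cup Z_b)$. For this I would invoke Theorem \ref{4}: the surface $X$ in any K-polystable pair $(X, cC)$ is either an ADE quintic del Pezzo or one of the three explicit types (a)--(c). Smooth curves in $|-2K_X|$ for the first class are canonically embedded genus six curves with finitely many $g^2_6$ and hence lie in $U\cup Z_{<5}$; for type (a) the curve is the birational image of a smooth plane quintic and lies in $Z_q$; for types (b) and (c) the curve inherits a $g^1_3$ from the ruling on $\mb{P}(1,1,4)$ or $\mb{F}_2$ and therefore lies in $Z_t$. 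Hyperelliptic curves are ruled out immediately because their canonical map factors two-to-one through a rational normal curve and so admits no canonical embedding into any surface. Bielliptic genus six curves are ruled out because the intersection of quadrics through their canonical image is a cone over the elliptic normal quartic in $\mb{P}^4$, which does not appear in the list of Theorem \ref{4}; hence a bielliptic $C$ has no canonical realization as a $-2K$-divisor on any surface occurring in our K-moduli.
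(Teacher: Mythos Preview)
Your proposal is essentially correct and follows the same strategy as the paper: part (1) via Theorem \ref{55} and the characterization of $U$, and part (2) via the surface classification (Theorem \ref{4}) together with the curve--surface correspondences established in Sections 3--6. The paper's own proof is terser—it simply says ``by the description of walls and K-semistable elements in the previous three sections''—but unpacking that reference amounts to exactly your stratum-by-stratum analysis.

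One inaccuracy worth flagging: your final sentence about bielliptic curves is not correct as stated. By Petri's theorem, a bielliptic genus six curve is neither trigonal nor a plane quintic, so its canonical ideal \emph{is} generated by quadrics; the intersection of all quadrics through the canonical image is therefore the curve $C$ itself, not the elliptic cone. (Also, the elliptic normal curve in $\mb{P}^4$ has degree $5$, not $4$.) What is true is that the canonical model of a bielliptic $C$ lies on the cone over an elliptic quintic, and this cone is not among the surfaces in Theorem \ref{4}; but arguing that $C$ lies on \emph{no other} degree-$5$ surface in $\mb{P}^5$ requires more than the quadric intersection. Fortunately this extra paragraph is redundant: your preceding case analysis already shows that smooth $C$ on ADE del Pezzo surfaces lie in $U\cup Z_{<5}$ (via \cite{AH81}), on type (a) surfaces lie in $Z_q$, and on types (b)--(c) lie in $Z_t$, which together exhaust $M_6\setminus(Z_h\cup Z_b)$. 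So you can simply delete the hyperelliptic/bielliptic paragraph, or replace the bielliptic sentence with a reference to the disjointness of the special strata.
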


\begin{proof}
The first statement follows from Theorem \ref{55}. Indeed, for each pair $(\Sigma,\varepsilon C)\in \ove{M}^K(\varepsilon)$, if $C$ is smooth, then $C$ has exactly five distinct linear series $g^2_6$. There is a divisor $E_0(\varepsilon)$ in $\ove{M}^K(\varepsilon)$ parameterizing pairs $(\Sigma,\varepsilon C)$ where $C$ is a singular curve. A general such singular curve $C$ has at worst nodal singularities, and hence is an element in $\Delta_0$. Since $C$ is canonically embedded in $\Sigma_5$, then the map $E_0(\varepsilon)\dashrightarrow \Delta_0$ is generically finite, and hence dominant by counting dimensions. 

For (2), we first notice that by the description of walls and K-semistable elements in the previous three sections, if $(X,cC)\in\ove{M}^K(c)$ is a K-polystable element and $C$ is smooth, then $C$ is either a non-special curve, or a plane quintic curve, or a trigonal curve. Conversely, let $C$ be a smooth curve of genus six which is neither hyperelliptic nor bielliptic. If $C$ has finitely many $g^2_6$, then there exists a unique ADE quintic del Pezzo surface $X$ such that $(X,cC)$ is K-polystable for any $\frac{11}{28}<c<\frac{1}{2}$ by Remark \ref{203}. Similarly, if $C$ is trigonal or a plane quintic curve, then there is a surface $X=X_t$ or $X=X_q$ such that $(X,cC)$ is K-polystable for $\frac{11}{28}<c<\frac{1}{2}$. This proves (2).
\end{proof}

\subsection{Further discussion}

In the end, let us give some explanations on the loci of hyperelliptic and bielliptic curves.

In \cite{Yan96}, the author gives a complete classification of plane sextic curves with ADE singularities. In \cite[Table 2]{Yan96} that there is a type of curves with an $A_{13}$-singularity and 4 nodes in general position. Blowing up the plane at these four points will send these curves into $|-2K_{\Sigma_5}|$, whose stable reduction is a smooth genus six hyperelliptic curve. By \cite[6.2.1]{Has00} we know that all genus 6 smooth hyperelliptic curves arises this way. See \cite[Section 4.5]{Gol21}  for a similar description. Notice that such curves have log canonical threshold $\lct(X,C)=\frac{1}{2}+\frac{1}{14}$, thus we cannot see hyperelliptic curves in the K-moduli $\ove{M}^K(c)$ for $0<c<1/2$.

Consider the pair $(\Sigma_5,cC)$, where $C$ is the proper transform of a double elliptic curve on $\mb{P}^2$ passing through the four blown-up points. Such a pair is K-stable for $0<c<1/2$. When $c=1/2$, the pair will degenerate to a cone over an elliptic curve together with a double elliptic curve at infinity. Across this wall, the bielliptic curves will appear. Furthermore, any bielliptic curve of genus six can be realized as a quadric section of a cone in $\mb{P}^5$ over a elliptic normal curve in $\mb{P}^4$ which avoids the vertex.

For curves in $\Delta_i$ for $i=2,3$, we expect that they appear in the KSBA-moduli spaces of stable pairs $(X,cD)$ for $1/2<c<1$. In KSBA-moduli spaces, we allow the pair to have slc singularities. In particular, the surfaces can be reducible. Thus for each such a reducible surface $X$, every section in the linear series $|-2K_X|$ is reducible.

\bibliographystyle{alpha}
\bibliography{citation}

\end{document}